\newtheorem{theorem}{Theorem}[section]
\newtheorem{lemma}{Lemma}[section]
\newtheorem{proposition}{Proposition}[section]
\newtheorem{corollary}{Corollary}[section]
\newtheorem{remark}{Remark}[section]
\newtheorem{conjecture}{Conjecture}[section]
\theoremstyle{definition}
\theoremstyle{remark}
\numberwithin{equation}{section}
\subjclass[2020]{Primary  34C45, 37B55; Secondary  37C60. }
\keywords{Nonautonomous dynamical system,  invariant manifold, compactification, smoothness. }
\date{\today}
\thanks{ Email: \dag schen@ccnu.edu.cn. \ddag duan@gbu.edu.cn. }
\begin{document}

\title[$C^{k}$ extension and invariant manifolds]
{$C^{k}$ extension and invariant manifolds for the compactification of nonautonomous \\ systems with autonomous limits}

\maketitle

\centerline{\scshape Shuang Chen\,$^{a,b,\dag}$, \  Jinqiao Duan\,$^{c,\ddag}$}
\medskip
{\footnotesize
 \centerline{$^{a}$ School of Mathematics and Statistics, Central China Normal University}
 \centerline{Wuhan, Hubei 430079, China}
 \centerline{$^{b}$ Key Lab NAA--MOE, Central China Normal University,}
 \centerline{Wuhan, Hubei 430079, China}
   \centerline{$^{c}$ Department of Mathematics, School of Sciences, Great Bay University}
   \centerline{Dongguan, Guangdong 523000, China}
} 

\medskip

\begin{abstract}
We study the compactification of nonautonomous systems with autonomous limits and related dynamics.
Although the $C^{1}$ extension of the compactification was well established,
a great number of problems arising in bifurcation and stability analysis
require the compactified systems with high-order smoothness.
Inspired by this, we give a criterion for the $C^{k}$ ($k\geq 2$) extension of the compactification.
After compactifying nonautonomous systems,
the compactified systems may gain an additional center direction.
We prove the existence and uniqueness of center or center-stable manifolds for general compact invariant sets
including normally hyperbolic invariant manifolds and admissible sets.

\end{abstract}

\parskip 0.3cm

\section{Introduction}

Nonautonomous dynamical systems naturally arise in modeling real world systems and theoretical studies,
such as mechanical systems with external forces and stability analysis of solutions.
Compared to autonomous dynamical systems,
whose evolutions are independent of initial time,
nonautonomous dynamical systems have time-dependent terms explicitly.
This causes a big obstacle in analyzing the evolution of a nonautonomous dynamical system.
In the past few decades,
substantial progress has been made in the study of various aspects of nonautonomous dynamical systems.
See \cite{Carvalho-etal-13,Coppel-78,Jager-09,Johnson-etal-16,Kloeden-Rasmussen-11,Li-Duan-09,Lu-Wang-10,Sun-Cao-Duan-07} for instance.
In the present paper,
we focus on  nonautonomous differential systems with autonomous limits,
especially for their compactification framework,
and the $C^{k}$ extension and invariant manifolds of the related compactified systems.

More precisely, consider a nonautonomous differential system
\begin{eqnarray}\label{eq:NDS}
\dot x:=\frac{d x}{dt}=f(x,\mu(t)),
\end{eqnarray}
where $x\in U$ is the state variable, $t$ is time, and $U$ is a open subset of $\mathbb{R}^{N}$.
Here the time-dependent term $\mu:\mathbb{R}\to V\subset \mathbb{R}^{d}$
and the vector field $f: U\times V\to \mathbb{R}^{N}$
are $C^{k}$ smooth for some integer $k\geq 2$.
We focus on the time-dependent term $\mu$ with asymptotic limits, i.e.,
\begin{eqnarray*}
\lim_{t\to +\infty}\mu(t)=\mu^{+}
\ \ \  \mbox{ or }\ \ \
\lim_{t\to -\infty}\mu(t)=\mu^{-}.
\end{eqnarray*}
If both of these two limits hold,
then the term $\mu$ is said to be  bi-asymptotically constant with future limit $\mu^{+}$ and past limit $\mu^{-}$.
If one of them holds,
then the term $\mu$ is said to be asymptotically constant with future limit $\mu^{+}$ or past limit $\mu^{-}$.
The nonautonomous system \eqref{eq:NDS} with asymptotically constant limits appears in  a wide range of problems,
such as in bifurcation analysis of homoclinic and heteroclinic orbits \cite{Guck-Holmes-83,Ilyashenko-Li-99,Palmer-84,Zhu-Zhang-20},
stability analysis of nonlinear waves and patterns \cite{Alexander-Gardner-Jones-90,Kapitula-Promislow-13,Sandstede-02},
studying forced waves \cite{Berestycki-Fang-18} and rate-induced critical transitions \cite{Ashwin-etal-12,Kuehn-Longo-22,Wieczorek-Xie-Ashwin-23}, etc.

If $\mu$ has a future limit $\mu^{+}$,
then letting $t\to +\infty$ in the right side of \eqref{eq:NDS} yields
\begin{eqnarray}\label{eq:NDS-future}
\dot x=f(x,\mu^{+}),
\end{eqnarray}
which is called an autonomous future limit system.
If $\mu(t)$ has a past limit, then letting $t\to -\infty$ yields
\begin{eqnarray}\label{eq:NDS-past}
\dot x=f(x,\mu^{-}),
\end{eqnarray}
which is called an autonomous past limit system.
As a result, if $\mu$ is asymptotically constant,
then the nonautonomous system \eqref{eq:NDS} has an autonomous limit \cite{Markus-56}.
By letting $\nu=t$,
the nonautonomous system \eqref{eq:NDS} is changed into an autonomous system
\begin{eqnarray}\label{ode-auto}
\begin{aligned}
\dot x &=f(x,\mu(\nu)),\\
\dot \nu &=1,
\end{aligned}
\end{eqnarray}
where $\nu$ is defined on $(-\infty,\infty)$.
As far as we know,
the compactification approaches, such as the Poincar\'{e} compactification and the Poincar\'{e}--Lyapunov Compactification
\cite{Dumortier-Llibre-Artes-06,ZTHD-92},
are good for analyzing the behavior of orbits near infinity.
It is natural to compactify $\nu$-direction so that we are able to grasp the limit behavior of the nonautonomous system \eqref{eq:NDS}.
Wieczorek, Xie and Jones \cite{Wieczorek-Xie-Jone-21} recently developed a framework
for compactifying $\nu$-direction as follows.
Let $\phi: \mathbb{R}\to (-1,1)$ be a function which is at least $C^{2}$-smooth  and satisfies
\begin{eqnarray*}
\lim_{t\to \pm \infty}\phi(t)=\pm 1, \ \ \
\dot \phi(t)>0 \mbox{ for } t\in\mathbb{R},\ \ \
\lim_{t\to \pm \infty}\dot \phi(t)=0.
\end{eqnarray*}
Then by letting $\nu=\phi^{-1}(s):=h(s)$, system \eqref{ode-auto} is changed into
\begin{eqnarray}\label{ode-compact}
\begin{aligned}
\dot x &=f(x,\mu(h(s))),\\
\dot s &=g(s),
\end{aligned}
\end{eqnarray}
where $h(s)=\phi^{-1}(s)=t$ and $g(s)=\dot\phi(h(s))$.
Clearly, the additional dependent variable $s$ is defined on $(-1,1)$
and $\nu=\pm \infty$ are transformed into $s=\pm 1$.
This realizes the compactification of $\nu$-direction.
The process is referred to the compactification of the nonautonomous system \eqref{eq:NDS} with autonomous limits.

However, the compactified system \eqref{ode-compact} may not be $C^{1}$-smooth at the added invariant subspaces $\{s=\pm 1\}$,
as pointed out in section 2.1.2 of \cite{Wieczorek-Xie-Jone-21}.
In \cite{Wieczorek-Xie-Jone-21}, Wieczorek, Xie and Jones  derived  a criterion
such that the $C^{1}$ smoothness of system \eqref{ode-compact} can be extended to $U\times [-1,1]$.
This is called the  $C^{1}$ extension of the compactification.
The framework presented in \cite{Wieczorek-Xie-Jone-21} provides a possibility to investigate the dynamics of the nonautonomous system \eqref{eq:NDS}
by the limit systems \eqref{eq:NDS-future} and \eqref{eq:NDS-past} in the view of perturbation theory,
while the $C^{1}$ smoothness of autonomous differential systems is not enough to analyze stability and bifurcations
in some degenerate cases.
For example,
the linearization of a planar system about a higher-order weak focus has a pair of purely imaginary eigenvalues,
and the corresponding first Lyapunov coefficient has zero real part (see, for instance, \cite{Chow-Hale-82,Hassard-81}).
In general, we require high-order smoothness conditions to give the explicit expressions of the high-order Lyapunov coefficients.
Then we can analyze its stability and the corresponding degenerate Hopf bifurcation.
Inspired by it,
one of our contributions is to give a criterion under which
the compactified system is $C^{k}$-smooth for any $k\geq 2$ on the extended domain $U\times [-1,1]$,
i.e., realizing the $C^{k}$ extension of the compactification.

After compactifying the nonautonomous system \eqref{eq:NDS},
one of the remaining problems is to study the invariant manifolds of compact invariant sets
for the limit systems when embedded in the extended phase space.
This is helpful to grasp the dynamics of the nonautonomous system \eqref{eq:NDS},
such as, exploring its local pullback attractors, connecting heteroclinic orbits, etc.
Unluckily, the compactified system may gain an additional center direction (see the detail in subsection \ref{sec:equiliria}).
Due to the important role of the uniqueness of center manifolds in applications,
Wieczorek, Xie and Jones \cite{Wieczorek-Xie-Jone-21}
further established the existence and uniqueness of center-stable manifolds for
the hyperbolic saddles
of the future limit system \eqref{eq:NDS-future}
when embedded in the extended phase space (see \cite[Theorems 3.2]{Wieczorek-Xie-Jone-21}),
and center manifolds for the hyperbolic sink of the past limit system \eqref{eq:NDS-past}
when embedded in the extended phase space (see \cite[Theorem 3.4]{Wieczorek-Xie-Jone-21}).
It is worth noting that
the argument for the uniqueness in \cite{Wieczorek-Xie-Jone-21} relies on the good local structures of equilibria
so that we are able to  normalize the compactified system near equilibria.
However,  many invariant sets have no good local structures, even without tubular neighborhoods
\cite{Arnold-Afrajmovich-Ilyashenko-13,Chow-Liu-Yi-00,Robinson-99}.
Then the method used in \cite{Wieczorek-Xie-Jone-21} is not applicable at all.

In this paper,
we consider more general compact invariant sets
including normally hyperbolic invariant manifolds \cite{Bates-Peter-Lu-98,Fenichel-71,Wiggins-94} and admissible sets \cite{Chow-Liu-Yi-00},
and  partially proves a conjecture proposed by Wieczorek, Xie and Jones \cite{Wieczorek-Xie-Jone-21} (see Conjecture \ref{conj:WXJ} below).
If the invariant set is a normally hyperbolic invariant manifold,
we prove the existence of its center-stable or center manifolds
when embedded in the extended phase space, following the idea of Fenichel \cite{Fenichel-71,Fenichel-79}.
By constructing a suitable tubular neighborhood,
we prove the uniqueness with the help of some explicit estimates along the orbits in this tubular neighborhood.
If it is an admissible set, which may have no tubular neighborhoods,
we prove the existence by  several results in Chow, Liu and Yi's work \cite{Chow-Liu-Yi-00}
and the uniqueness by a similar argument used in the case of normally hyperbolic invariant manifolds.
Our developed theory ensures that the solutions of interest lie in
the unique invariant manifolds of invariant sets for the limit systems
when embedded in the extended phase space.
This will be useful in various problems arising from applications 
\cite{Berestycki-Fang-18,Kloeden-Rasmussen-11,Kuehn-Longo-22,Wieczorek-Xie-Jone-21,Wieczorek-Xie-Ashwin-23}.

This paper is organized as follows.
In section \ref{sec:C-k},
we provide the compactification framework and give the criteria for the $C^{k}$ extensions of
the two-sided compactification if $\mu$ is  bi-asymptotically constant
and the one-sided compactification if $\mu$ is asymptotically constant.
We review the previous results of Wieczorek, Xie and Jones and introduce their conjecture in subsection \ref{sec:equiliria}.
Then we prove Wieczorek, Xie and Jones's conjecture holds for normally hyperbolic invariant manifolds in subsection \ref{sec:NHIM},
and normally hyperbolic admissible sets in subsection \ref{sec:Adm}.
We also apply our results to a mechanical model in the final section.

\section{$C^{k}$-smooth compactified systems}
\label{sec:C-k}

In this section,
we consider the $C^{k}$ extension for the compactification of the nonautonomous system \eqref{eq:NDS}
with asymptotically autonomous properties.
According to the types of asymptotical properties,
the discussion is divided into two different cases:
{\bf (C1)} $\mu$ is bi-asymptotically constant with future limit $\mu^{+}$ and past limit $\mu^{-}$,
and {\bf (C2)} $\mu$ is asymptotically constant with future limit $\mu^{+}$ or past limit $\mu^{-}$.
The $C^{1}$ extension of the related compactification was previously studied
by Wieczorek, Xie and Jones in \cite{Wieczorek-Xie-Jone-21}.
We focus on the explicit conditions such that the compactified systems admit higher regularity.

\subsection{Two-sided compactification}
\label{sec:two-side}

In this subsection,
we consider the case that the time-dependent term $\mu$ is bi-asymptotically constant with future limit $\mu^{+}$ and past limit $\mu^{-}$.
In order to realize the two-side compactification,
we take a change $s=\phi(t)$ and require that $\phi$ satisfies the assumption:
\begin{enumerate}
\item[{\bf (H1)}] The function $\phi:\mathbb{R}\to (-1,1)$ is $C^{k+1}$-smooth and satisfies
\begin{eqnarray*}
\lim_{t\to \pm \infty}\phi(t)=\pm 1, \ \ \
\dot \phi(t)>0 \mbox{ for } t\in\mathbb{R},\ \ \ \lim_{t\to \pm \infty}\dot \phi(t)=0.
\end{eqnarray*}
\end{enumerate}
After this change, the nonautonomous system \eqref{eq:NDS} is transformed into
an autonomous system:
\begin{eqnarray}\label{eq:transf}
\begin{aligned}
\dot x &=f(x,\mu(h(s))),\\
\dot s &=g(s),
\end{aligned}
\end{eqnarray}
where
\begin{eqnarray*}
h(s)=\phi^{-1}(s)=t,\ \ \ \ \ g(s)=\phi'(h(s))=1/h'(s).
\end{eqnarray*}
Here we shall always use $\cdot$ and $'$ to denote $d/dt$ and $d/ds$, respectively.

It is clear that the transformed system \eqref{eq:transf} is well defined on the set $U\times (-1,1)$.
By assumption {\bf (H1)},
the transformation $\phi$ is $C^{k+1}$-smooth and $\dot \phi(t)>0$ for all $t\in\mathbb{R}$.
Consider equation $\phi(t)=s$. Then by the implicit function theorem \cite[Theorem 2.3, p.26]{Chow-Hale-82},
we have that $h(s)=\phi^{-1}(s)$ is $C^{k+1}$-smooth for $s\in(-1,1)$.
This together with the $C^{k}$ smoothness of $f$ and $g$ implies
that the transformed system \eqref{eq:transf} is $C^{k}$-smooth on the set $U\times (-1,1)$.

Note that the time-dependent term $\mu$ is bi-asymptotically constant with future limit $\mu^{+}$ and past limit $\mu^{-}$.
Then the compactification can be realized by extending $f$ and $g$ to $U\times [-1,1]$ as follows:
\begin{eqnarray*}
F(x,s)\!\!\!&=&\!\!\!\left\{
\begin{aligned}
&f(x,\mu^{-}), && \mbox{ if }\ s=-1,\\
&f(x,\mu(h(s))), && \mbox{ if }\ -1<s<1,\\
&f(x,\mu^{+}), && \mbox{ if }\ s=1,
\end{aligned}
\right.
\\
G(s)\!\!\!&=&\!\!\!\left\{
\begin{aligned}
& 0, && \mbox{ if }\ s=-1,\\
& g(s), && \mbox{ if }\ -1<s<1,\\
& 0, && \mbox{ if }\ s=1.
\end{aligned}
\right.
\end{eqnarray*}
Consequently, we get an autonomous compactified system
\begin{eqnarray}\label{eq:comp}
\begin{aligned}
\dot x &=F(x,s),\\
\dot s &=G(s),
\end{aligned}
\end{eqnarray}
which is defined on $U\times [-1,1]$.

\subsubsection{Transformation conditions for the $C^{k}$ extension}

Now we present some criteria such that
the compactified system \eqref{eq:comp} is also $C^{k}$-smooth on the extended domain $U\times [-1,1]$.
Then we realize the $C^{k}$ extension.
This is not always true, even for $C^{1}$-smoothness (see section 2.1.2 of \cite{Wieczorek-Xie-Jone-21}).
The argument is based on the Fa\`a di Bruno formula  \cite{Faa-55} (see also Appendix A or Formula (1.1) in \cite{Constantine-Savits-96})
for computing arbitrary derivatives of a composition of functions.

For convenience, we introduce some notations as a preparation.
Let $\varphi(\theta)$ be defined on an pen interval $J\subset\mathbb{R}$ and $C^{m}$-smooth on this set.
For each pair of integers $l$ and $n$ with $1\leq l\leq n\leq m$,
we define $S_{n}^{l}\varphi(\theta)$ by
\begin{eqnarray}\label{df:S-n-l}
S_{n}^{l}\varphi(\theta):=\sum_{q(n,l)} n!\prod_{i=1}^{n}\frac{(\varphi^{(i)}(\theta))^{\lambda_{i}}}{(\lambda_{i}!)(i!)^{\lambda_{i}}},
\ \ \ \ \ \theta\in J,
\end{eqnarray}
where
\[
\varphi^{(i)}(\theta):=\frac{d^{i}}{d\theta^{i}}\varphi(\theta),
\ \ \ \ \
q(n,l):=\left\{(\lambda_{1},...,\lambda_{n}): \lambda_{i}\in \mathcal{N}_{0},\
       \sum_{i=1}^{n}\lambda_{i}=l,\ \sum_{i=1}^{n}i\lambda_{i}=n \right\},
\]
and $\mathcal{N}_{0}$ denotes the set of nonnegative integers.

In order to give the criteria for the $C^{k}$-smoothness of $G(s)$,
we define a family of matrix functions $\mathcal{M}_{n}h$ ($0\leq n\leq k$) by
\begin{eqnarray*}
\mathcal{M}_{n}h(s):=
\left(
\begin{array}{cccc}
S_{1}^{1}h(s) & 0 & \cdot\cdot\cdot & 0 \\
S_{2}^{1}h(s) & S_{2}^{2}h(s) & \cdot\cdot\cdot & 0\\
\cdot\cdot\cdot & \cdot\cdot\cdot & \cdot\cdot\cdot & \cdot\cdot\cdot\\
S_{n+1}^{1}h(s) & S_{n+1}^{2}h(s) & \cdot\cdot\cdot & S_{n+1}^{n+1}h(s)
\end{array}
\right),
\ \ \ \ \ s\in (-1,1),
\end{eqnarray*}
where each entry $S_{j}^{i}h(s)$ of $\mathcal{M}_{n}h(s)$ is defined by the formula \eqref{df:S-n-l}.
Let the $i$-th column of $\mathcal{M}_{n}h(s)$ be replaced by the vector $(1,0,...,0)^{T}\in\mathbb{R}^{n+1}$.
Then we get
\begin{eqnarray*}
\mathcal{M}_{n,i}h(s):=\left(
\begin{array}{cccccc}
S_{1}^{1}h(s) & 0 & \cdot\cdot\cdot &  1 & \cdot\cdot\cdot & 0 \\
S_{2}^{1}h(s) & S_{2}^{2}h(s) & \cdot\cdot\cdot & 0 & \cdot\cdot\cdot & 0 \\
\cdot\cdot\cdot & \cdot\cdot\cdot & \cdot\cdot\cdot & \cdot\cdot\cdot& \cdot\cdot\cdot & \cdot\cdot\cdot\\
S_{n+1}^{1}h(s) & S_{n+1}^{2}h(s) & \cdot\cdot\cdot& 0& \cdot\cdot\cdot & S_{n+1}^{n+1}h(s)
\end{array}
\right),\ \ \ \ \ i=1,...,n+1.
\end{eqnarray*}
We shall use the matrix functions $\mathcal{M}_{n}h(s)$ and $\mathcal{M}_{n,i}h(s)$
to give a criterion for the $C^{k}$-smoothness of $G(s)$ in terms of a function of $s$.

Similarly, we define  $\mathcal{Q}_{n}\phi(t)$ ($1\leq n\leq k$) by
\begin{eqnarray}\label{matrix:Q-n}
\mathcal{Q}_{n}\phi(t):=
\left(
\begin{array}{ccccc}
S_{1}^{1}\phi(t) & 0   & \cdot\cdot\cdot & 0 \\
S_{2}^{1}\phi(t) & S_{2}^{2}\phi(t)   & \cdot\cdot\cdot & 0 \\
\cdot\cdot\cdot & \cdot\cdot\cdot  & \cdot\cdot\cdot & \cdot\cdot\cdot\\
S_{n}^{1}\phi(t) & S_{n}^{2}\phi(t)  & \cdot\cdot\cdot & S_{n}^{n}\phi(t)
\end{array}
\right),
\ \ \ \ \ t\in\mathbb{R}.
\end{eqnarray}
Let the $i$-th column of $\mathcal{Q}_{n}\phi(t)$ be replaced by the vector
$(\phi^{(2)}(t),\phi^{(3)}(t),...,\phi^{(n+1)}(t))^{T}$.
Then we obtain
\begin{eqnarray}\label{matrix:Q-n-i}
\mathcal{Q}_{n,i}\phi(t):=\left(
\begin{array}{cccccc}
S_{1}^{1}\phi(t) & 0 & \cdot\cdot\cdot &  \phi^{(2)}(t) & \cdot\cdot\cdot & 0 \\
S_{2}^{1}\phi(t) & S_{2}^{2}\phi(t) & \cdot\cdot\cdot & \phi^{(3)}(t) & \cdot\cdot\cdot & 0 \\
\cdot\cdot\cdot & \cdot\cdot\cdot & \cdot\cdot\cdot & \cdot\cdot\cdot& \cdot\cdot\cdot & \cdot\cdot\cdot\\
S_{n}^{1}\phi(t) & S_{n}^{2}\phi(t) & \cdot\cdot\cdot& \phi^{(n+1)}(t) & \cdot\cdot\cdot & S_{n}^{n}\phi(t)
\end{array}
\right),\ \ \ \ \ i=1,...,n.
\end{eqnarray}
We shall also give another criterion for the $C^{k}$-smoothness of $G(s)$ in terms of a function of $t$
by the matrix functions $\mathcal{Q}_{n}\phi(t)$ and $\mathcal{Q}_{n,i}\phi(t)$.

\begin{lemma}\label{lm:G-smooth}
Suppose that the transformation $\phi$ satisfies assumption {\bf (H1)}.
Let $h(s)=\phi^{-1}(s)$ for $s\in (-1,1)$.
Then $G(s)$ is $C^{k}$-smooth on the interval $[-1,1]$ if the limits
\begin{eqnarray}\label{limt-G-cond}
\lim_{s\to \pm 1^{\mp}}\frac{1}{{\rm det}(\mathcal{M}_{n}h(s))}\sum_{i=1}^{n}{\rm det} (\mathcal{M}_{n,i+1}h(s)) \cdot S_{n}^{i}h(s)
=
\lim_{t\to \pm \infty}\frac{{\rm det}(\mathcal{Q}_{n,n}\phi(t))}{{\rm det} (\mathcal{Q}_{n}\phi(t))}
\end{eqnarray}
exist for all $n=1,...,k$.
\end{lemma}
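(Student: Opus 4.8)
The plan is to prove $C^{k}$ regularity on the interior first and then extend across the two endpoints by a standard one-sided criterion. On $(-1,1)$ we have $G(s)=g(s)=\phi'(h(s))=1/h'(s)$, which is $C^{k}$ because {\bf (H1)} gives $\phi\in C^{k+1}$ (so $\phi'\in C^{k}$) and the implicit function theorem gives $h=\phi^{-1}\in C^{k+1}$. Moreover $G$ is continuous on $[-1,1]$: as $s\to\pm1^{\mp}$ we have $h(s)\to\pm\infty$ by {\bf (H1)}, hence $g(s)=\phi'(h(s))\to0=G(\pm1)$. The reduction I would use is the elementary fact that if a continuous function is $C^{k}$ on $(-1,1)$ and every derivative $g^{(n)}(s)$, $1\le n\le k$, has a finite one-sided limit at an endpoint, then the extended function is $C^{k}$ there, with $G^{(n)}(\pm1)$ equal to that limit; this follows by induction from the mean value theorem applied to $g^{(n-1)}$. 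Thus the entire problem reduces to showing that condition \eqref{limt-G-cond} is precisely the statement that $\lim_{s\to\pm1^{\mp}}g^{(n)}(s)$ exists for $n=1,\dots,k$.

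I would then compute $g^{(n)}$ in two ways, each producing one side of \eqref{limt-G-cond}. For the left-hand side, applying the Fa\`a di Bruno formula to $g=\phi'\circ h$ gives
\[
g^{(n)}(s)=\sum_{l=1}^{n}\phi^{(l+1)}(h(s))\,S_{n}^{l}h(s).
\]
To eliminate the quantities $\phi^{(l+1)}(h(s))$, differentiate the identity $\phi(h(s))=s$ a total of $m$ times for $m=1,\dots,n+1$; Fa\`a di Bruno yields $\sum_{l=1}^{m}\phi^{(l)}(h(s))\,S_{m}^{l}h(s)=\delta_{m1}$, that is, the linear system $\mathcal{M}_{n}h(s)\,\bigl(\phi^{(1)}(h(s)),\dots,\phi^{(n+1)}(h(s))\bigr)^{T}=(1,0,\dots,0)^{T}$. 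Since $S_{m}^{l}h=0$ for $l>m$ and $S_{m}^{m}h=(h'(s))^{m}\neq0$, the matrix $\mathcal{M}_{n}h(s)$ is lower triangular with nonvanishing determinant $\prod_{m=1}^{n+1}(h'(s))^{m}$, so Cramer's rule gives $\phi^{(l+1)}(h(s))=\det(\mathcal{M}_{n,l+1}h(s))/\det(\mathcal{M}_{n}h(s))$. Substituting back reproduces exactly the left member of \eqref{limt-G-cond}, which is therefore nothing but $g^{(n)}(s)$.

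For the right-hand side, I would start instead from $g(\phi(t))=\phi'(t)$ and differentiate in $t$. Fa\`a di Bruno gives $\sum_{l=1}^{m}g^{(l)}(\phi(t))\,S_{m}^{l}\phi(t)=\phi^{(m+1)}(t)$ for $m=1,\dots,n$, i.e. the triangular system $\mathcal{Q}_{n}\phi(t)\,\bigl(g^{(1)}(\phi(t)),\dots,g^{(n)}(\phi(t))\bigr)^{T}=(\phi^{(2)}(t),\dots,\phi^{(n+1)}(t))^{T}$, where $\det(\mathcal{Q}_{n}\phi(t))=\prod_{m=1}^{n}(\phi'(t))^{m}\neq0$ because $\phi'>0$. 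Solving the last component by Cramer's rule yields $g^{(n)}(\phi(t))=\det(\mathcal{Q}_{n,n}\phi(t))/\det(\mathcal{Q}_{n}\phi(t))$, which is the right member of \eqref{limt-G-cond}. Since $\phi(t)\to\pm1^{\mp}$ as $t\to\pm\infty$, the two one-sided limits in \eqref{limt-G-cond} are simply $\lim_{s\to\pm1^{\mp}}g^{(n)}(s)$ expressed in the variables $s$ and $t=h(s)$ respectively; they automatically agree whenever they exist. Hence the hypothesis that these limits exist for $n=1,\dots,k$ is exactly the hypothesis of the regularity criterion of the first paragraph, and $G\in C^{k}([-1,1])$ follows.

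The step I expect to be most delicate is the bookkeeping: setting up the two Fa\`a di Bruno systems with the correct index offset ($\phi^{(l+1)}$ against $\phi^{(l)}$), matching the column-replacement conventions built into the definitions of $\mathcal{M}_{n,i}h$ and $\mathcal{Q}_{n,i}\phi$, and verifying that the triangular determinants $\prod(h')^{m}$ and $\prod(\phi')^{m}$ are nonzero so that Cramer's rule is legitimate (this is exactly where the monotonicity $\dot\phi>0$ of {\bf (H1)} is used). The final passage from ``all one-sided limits of $g^{(n)}$ exist'' to genuine $C^{k}$ regularity at $s=\pm1$ is routine but must be recorded, since $C^{k}$-smoothness on the closed interval is formally stronger than the bare existence of the limits in \eqref{limt-G-cond}.
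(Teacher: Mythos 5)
Your proposal is correct and follows essentially the same route as the paper's own proof: continuity of $G$ at $s=\pm 1$, reduction to existence of one-sided limits of $g^{(n)}$, and then two computations of $g^{(n)}$ --- one via Fa\`a di Bruno applied to $\phi'\circ h$ with Cramer's rule on the system coming from $\phi(h(s))=s$ (yielding the $\mathcal{M}_{n}$-expression), the other via the system coming from $g(\phi(t))=\dot\phi(t)$ (yielding the $\mathcal{Q}_{n,n}/\mathcal{Q}_{n}$-expression). The only difference is cosmetic: you spell out the mean-value-theorem argument that existence of one-sided derivative limits upgrades continuity to $C^{k}$ at the endpoints, a step the paper asserts without proof.
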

\begin{proof}
We begin by proving that
\begin{eqnarray*}
{\rm det}(\mathcal{M}_{n}h(s))\neq 0 \ \ \ \ \ \mbox { for }\ s\in (-1,1)\  \mbox{ and }\ 0\leq n\leq k.
\end{eqnarray*}
By the definitions of $q(n,l)$ below \eqref{df:S-n-l}, we see that
\[
q(n,n)=\{(n,0,...,0)\}, \ \ \ \ \ 1\leq n\leq k+1.
\]
Then by \eqref{df:S-n-l}, we can compute
\begin{eqnarray*}
S_{n}^{n}h(s)=(h'(s))^{n}
\end{eqnarray*}
for all $s\in (-1,1)$ and  $1\leq n\leq k+1$.
This together with $h'(s)=1/\dot \phi(t)>0$ yields that
\begin{eqnarray*}
{\rm det}(\mathcal{M}_{n}h(s))=\prod_{i=1}^{n+1}(h'(s))^{i}>0
\ \ \ \mbox { for }  s\in (-1,1).
\end{eqnarray*}
Similarly, we can compute
\begin{eqnarray*}
{\rm det}(\mathcal{Q}_{n}\phi(t))=\prod_{i=1}^{n}(\dot \phi(t))^{i}>0
\ \ \ \mbox { for }\ t\in\mathbb{R}.
\end{eqnarray*}
Hence both $1/{\rm det}(\mathcal{M}_{n}h(s))$ and $1/{\rm det}(\mathcal{Q}_{n}\phi(t))$ are well-defined in this lemma.

By  assumption {\bf (H1)}, we have
\begin{eqnarray*}
\lim_{s\to \pm 1^{\mp}}G(s)=\lim_{s\to \pm 1^{\mp}}g(s)
   =\lim_{t\to \pm \infty}\dot \phi(t)=0.
\end{eqnarray*}
Then $G(s)$ is continuous at $s=\pm 1$.
Thus $G(s)$ is $C^{k}$-smooth on the interval $[-1,1]$ if  the limits
\[
\lim_{s\to \pm 1^{\mp}}G^{(n)}(s)=\lim_{s\to \pm 1^{\mp}}g^{(n)}(s)
\]
exist for all $n=1,2,...,k$.
For this reason, we now give two explicit expressions of $g^{(n)}(s)$ for $s\in (-1,1)$.
For each $n=1,2,...,k$ and $s\in (-1,1)$,
by the Fa\`a di Bruno formula \eqref{FdB-formula} we have
\begin{eqnarray*}
g^{(n)}(s)
 =\sum_{l=1}^{n}\frac{d^{l+1}}{dt^{l+1}}\phi(h(s))\cdot \sum_{q(n,l)} n!\prod_{i=1}^{n}\frac{(h^{(i)}(s))^{\lambda_{i}}}{(\lambda_{i}!)(i!)^{\lambda_{i}}}.
\end{eqnarray*}
By using \eqref{df:S-n-l}, we write $g^{(n)}(s)$ into a compact form
\begin{eqnarray}\label{formula:g-n}
g^{(n)}(s)=\sum_{i=1}^{n}\phi^{(i+1)}(t)\cdot S_{n}^{i}h(s), \ \ \ \ \ t=h(s).
\end{eqnarray}
Recall that $s=\phi(h(s))$. By applying the Fa\`a di Bruno formula again,
\begin{eqnarray}\label{formula:eq-01}
\phi^{(1)}(t)\cdot S_{1}^{1}h(s)=1,
\ \ \ \ \
\sum_{i=1}^{m}\phi^{(i)}(t)\cdot S_{m}^{i}h(s)=0 \ \mbox{ for }\ m=2,...,n+1.
\end{eqnarray}
Consider  \eqref{formula:eq-01} and solve for $\phi^{(i)}(t)$ in terms of $S_{m}^{l}h(s)$.
Then we get
\begin{eqnarray*}
\phi^{(i)}(t)=\frac{{\rm det}(\mathcal{M}_{n,i}h(s))}{{\rm det} (\mathcal{M}_{n}h(s))}, \ \ \ \ \ i=1,...,n+1.
\end{eqnarray*}
This together with  \eqref{formula:g-n} yields an expression of $g^{(n)}(s)$ as follows:
\begin{eqnarray}\label{formula-gn-1}
g^{(n)}(s)=\frac{1}{{\rm det}(\mathcal{M}_{n}h(s))}\sum_{i=1}^{n}{\rm det} (\mathcal{M}_{n,i+1}h(s)) \cdot S_{n}^{i}h(s)
\end{eqnarray}
for $s\in (-1,1)$ and $n=1,...,k$.

Note that  $g(\phi(t))=\dot \phi(t)$.
Then by the similar argument as above, we get
\begin{eqnarray*}
\sum_{i=1}^{m}g^{(i)}(s)\cdot S_{m}^{i}\phi(t)=\phi^{(m+1)}(t),\ \ \ \ \ m=1,...,n.
\end{eqnarray*}
Solving for $g^{(i)}(s)$ in terms of $S_{m}^{l}\phi(t)$,
we obtain
\begin{eqnarray*}
g^{(i)}(s)=\frac{{\rm det}(\mathcal{Q}_{n,i}\phi(t))}{{\rm det} (\mathcal{Q}_{n}\phi(t))},
\ \ \ \ \  i=1,...,n.
\end{eqnarray*}
This yields another expression of $g^{(n)}(s)$ as follows:
\begin{eqnarray}\label{formula-gn-2}
g^{(n)}(s)=\frac{{\rm det}(\mathcal{Q}_{n,n}\phi(t))}{{\rm det} (\mathcal{Q}_{n}\phi(t))},
\ \ \ \ \  n=1,...,k.
\end{eqnarray}
Therefore, the proof is finished by \eqref{formula-gn-1} and \eqref{formula-gn-2}.
\end{proof}

\begin{remark}
Let $\hat{\mathcal{M}}_{n}^{i,j}(s)$ ($1\leq i,j\leq n+1$)
denote the  cofactors of the matrices $\mathcal{M}_{n}h(s)$ for each $n=1,...,k$.
Then we obtain
\[
{\rm det}(\mathcal{M}_{n,i+1}h(s))=\hat{\mathcal{M}}_{n}^{1,i+1}(s).
\]
This together with \eqref{formula-gn-1} yields
\begin{eqnarray*}
g^{(n)}(s) \cdot {\rm det}(\mathcal{M}_{n}h(s))
   =\sum_{i=1}^{n}\hat{\mathcal{M}}_{n}^{1,i+1}(s)\cdot S_{n}^{i}h(s).
\end{eqnarray*}
Then we can compute
\begin{eqnarray*}
g^{(n)}(s)\cdot  {\rm det}(\mathcal{M}_{n}h(s))={\rm det}\left(
\begin{array}{cccc}
0 & S_{n}^{1}h(s) & \cdot\cdot\cdot & S_{n}^{n}h(s) \\
S_{2}^{1}h(s) & S_{2}^{2}h(s) & \cdot\cdot\cdot & 0\\
\cdot\cdot\cdot & \cdot\cdot\cdot & \cdot\cdot\cdot & \cdot\cdot\cdot\\
S_{n+1}^{1}h(s) & S_{n+1}^{2}h(s) & \cdot\cdot\cdot & S_{n+1}^{n+1}h(s)
\end{array}
\right).
\end{eqnarray*}
In the above matrix,
the vector $(0,S_{n}^{1}h(s),...,S_{n}^{n}h(s))$ replaces the first row of $\mathcal{M}_{n}h(s)$.
This formula provides another  way to compute $g^{(n)}(s)$.
\end{remark}


Recall that $\mathcal{Q}_{n}\phi(t)$ ($n=1,...,k$) are defined by \eqref{matrix:Q-n}.
Let the $i$-th column of $\mathcal{Q}_{n}\phi(t)$ be replaced by the vector $(1,0,...,0)^{T}$.
Then we get
\begin{eqnarray}\label{matrix:Q-tilde-ni}
\tilde{\mathcal{Q}}_{n,i}\phi(t):=\left(
\begin{array}{cccccc}
S_{1}^{1}\phi(t) & 0 & \cdot\cdot\cdot &  1 & \cdot\cdot\cdot & 0 \\
S_{2}^{1}\phi(t) & S_{2}^{2}\phi(t) & \cdot\cdot\cdot & 0 & \cdot\cdot\cdot & 0 \\
\cdot\cdot\cdot & \cdot\cdot\cdot & \cdot\cdot\cdot & \cdot\cdot\cdot& \cdot\cdot\cdot & \cdot\cdot\cdot\\
S_{n}^{1}\phi(t) & S_{n}^{2}\phi(t) & \cdot\cdot\cdot& 0 & \cdot\cdot\cdot & S_{n}^{n}\phi(t)
\end{array}
\right),\ \ \ \ \ i=1,...,n.
\end{eqnarray}
Now we present some conditions such that
the function $F(x,s)$ is $C^{k}$-smooth on the extended domain $U\times [-1,1]$.

\begin{lemma}\label{lm:F-smooth}
Suppose that the transformation $\phi$ satisfies assumption {\bf (H1)}.
Let $h(s)=\phi^{-1}(s)$ for $s\in (-1,1)$.
Then $F(x,s)$ is $C^{k}$-smooth on the extended domain $U\times[-1,1]$ if the limits
\begin{eqnarray}\label{limt:F-trans-cond}
\lim_{t\to\pm \infty}
\sum_{l=1}^{n}\mu^{(l)}(t)\cdot \sum_{q(n,l)} n!\prod_{i=1}^{n}\frac{(\varphi_{n,i}(t))^{\lambda_{i}}}{(\lambda_{i}!)(i!)^{\lambda_{i}}}
\end{eqnarray}
exist for all $n=1,...,k$, where
\begin{eqnarray*}
\varphi_{n,i}(t):=\frac{{\rm det}(\tilde{\mathcal{Q}}_{n,i}\phi(t))}{{\rm det} (\mathcal{Q}_{n}\phi(t))},
\ \ \ \ \  i=1,...,n, \ \ t\in\mathbb{R}.
\end{eqnarray*}
\end{lemma}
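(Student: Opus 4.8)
The plan is to reduce the joint $C^{k}$-smoothness of $F(x,s)=f(x,\mu(h(s)))$ on $U\times[-1,1]$ to the one–variable $C^{k}$-smoothness of the inner map
\begin{eqnarray*}
\eta(s):=\mu(h(s)),\qquad s\in(-1,1),
\end{eqnarray*}
extended to the endpoints by $\eta(\pm1):=\mu^{\pm}$. Since $f$ is $C^{k}$ on $U\times V$ and a composition of $C^{k}$ maps is $C^{k}$, once $\eta$ is known to extend to a $C^{k}$ map on $[-1,1]$ with values where $f$ is defined, the map $(x,s)\mapsto(x,\eta(s))$ is $C^{k}$ and hence so is $F=f\circ(\mathrm{id},\eta)$; in particular every mixed partial $\partial_{x}^{\alpha}\partial_{s}^{\beta}F$ then extends continuously to the boundary. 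This reduction is what decouples the $x$-dependence (absorbed into $f$) from the delicate boundary behaviour at $s=\pm1$. First I would record continuity: because $h(s)=\phi^{-1}(s)\to\pm\infty$ as $s\to\pm1^{\mp}$ and $\mu$ has limits $\mu^{\pm}$, we get $\eta(s)\to\mu^{\pm}$, so $\eta$ (hence $F$) is continuous on the closed domain and agrees with the prescribed boundary values.

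Next I would invoke the standard one–dimensional extension principle: if $\eta\in C^{k}((-1,1))$ is continuous on $[-1,1]$ and each derivative $\eta^{(n)}(s)$ has a finite limit as $s\to\pm1^{\mp}$ for $n=1,\dots,k$, then $\eta\in C^{k}([-1,1])$ with $\eta^{(n)}(\pm1)$ equal to those limits; this follows by induction on $n$ from the mean value theorem. Thus it suffices to produce the boundary limits of $\eta^{(n)}$. Applying the Fa\`a di Bruno formula \eqref{FdB-formula} componentwise to $\eta=\mu\circ h$ and using the notation \eqref{df:S-n-l}, I would write
\begin{eqnarray*}
\eta^{(n)}(s)=\sum_{l=1}^{n}\mu^{(l)}(t)\cdot S_{n}^{l}h(s),\qquad t=h(s),\ n=1,\dots,k.
\end{eqnarray*}

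The core step — and the place where the matrices $\mathcal{Q}_{n}\phi(t)$ and $\tilde{\mathcal{Q}}_{n,i}\phi(t)$ enter — is to rewrite the coefficients $S_{n}^{l}h(s)$ purely in terms of $t$. Exactly as in the proof of Lemma \ref{lm:G-smooth}, I would differentiate the identity $t=h(\phi(t))$ via the Fa\`a di Bruno formula to obtain the triangular linear system
\begin{eqnarray*}
\sum_{l=1}^{m}S_{m}^{l}\phi(t)\,h^{(l)}(s)=\delta_{m,1},\qquad m=1,\dots,n,
\end{eqnarray*}
that is, $\mathcal{Q}_{n}\phi(t)\,(h^{(1)}(s),\dots,h^{(n)}(s))^{T}=(1,0,\dots,0)^{T}$. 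Since $\det(\mathcal{Q}_{n}\phi(t))=\prod_{i=1}^{n}(\dot\phi(t))^{i}>0$ (established in Lemma \ref{lm:G-smooth}), Cramer's rule gives $h^{(i)}(s)=\varphi_{n,i}(t)$ with $\varphi_{n,i}$ the ratio of determinants built from \eqref{matrix:Q-tilde-ni}. Substituting $h^{(i)}(s)=\varphi_{n,i}(t)$ into the definition \eqref{df:S-n-l} of $S_{n}^{l}h(s)$ recasts the displayed expression for $\eta^{(n)}(s)$ as precisely the function of $t$ appearing inside the limit \eqref{limt:F-trans-cond}. Hence the hypothesis that those limits exist is equivalent to the existence of $\lim_{s\to\pm1^{\mp}}\eta^{(n)}(s)$ for all $n=1,\dots,k$, which by the extension principle makes $\eta$, and therefore $F$, $C^{k}$ on $U\times[-1,1]$.

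The main obstacle is the bookkeeping in this last conversion: one must check that the triangular system is genuinely governed by $\mathcal{Q}_{n}\phi(t)$ (matching the rows of \eqref{matrix:Q-n} against the orders $m=1,\dots,n$) and that Cramer's rule yields exactly the replacement-column matrices $\tilde{\mathcal{Q}}_{n,i}\phi(t)$ of \eqref{matrix:Q-tilde-ni}, so that $\varphi_{n,i}(t)$ equals $h^{(i)}(s)$ independently of $n\ge i$ — a consistency forced by the lower-triangular structure. A secondary point to verify is that $\mu^{\pm}\in V$ (or that $f$ extends $C^{k}$ to a neighbourhood of the relevant boundary values), so that $F=f\circ(\mathrm{id},\eta)$ is a legitimate composition of $C^{k}$ maps; this is built into the compactification setup.
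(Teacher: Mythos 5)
Your proposal is correct, and its computational core is exactly the paper's: both apply the Fa\`a di Bruno formula to $\mu\circ h$ to get $\tilde\mu^{(n)}(s)=\sum_{l=1}^{n}\mu^{(l)}(t)\,S_{n}^{l}h(s)$, both differentiate the identity $t=h(\phi(t))$ to obtain the lower-triangular system $\mathcal{Q}_{n}\phi(t)\,(h^{(1)}(s),\dots,h^{(n)}(s))^{T}=(1,0,\dots,0)^{T}$, and both solve it by Cramer's rule to get $h^{(i)}(s)=\varphi_{n,i}(t)$, so that the boundary limits of $\tilde\mu^{(n)}$ exist precisely when the limits \eqref{limt:F-trans-cond} do. Where you genuinely diverge is in handling the $x$-dependence and the passage to the closed domain. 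The paper applies Fa\`a di Bruno a second time, directly to $F$ in the $s$-variable, writing every mixed partial $\partial_{x}^{j}\partial_{s}^{n}F$ as a sum of $\partial_{x}^{j}\partial_{\mu}^{l}f$ times Bell-type polynomials in the $\tilde\mu^{(i)}(s)$, and then concludes $C^{k}$-smoothness of $F$ on $U\times[-1,1]$ from the existence of one-sided limits of all these mixed partials --- a multivariable extension step it leaves implicit. You instead factor $F=f\circ(\mathrm{id},\eta)$ with $\eta=\mu\circ h$, prove that the scalar map $\eta$ extends to $C^{k}([-1,1])$ via the elementary one-variable mean-value-theorem extension principle, and finish by stability of $C^{k}$ under composition. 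Your organization buys a cleaner and arguably more rigorous endgame: the only extension lemma you need is the standard one-dimensional one, which you can prove by induction, whereas the paper's final inference (limits of all mixed partials imply $C^{k}$ up to the boundary) is itself an unproved claim of the same nature in several variables. The paper's route has the mild advantage of producing explicit formulas for the mixed partials of $F$, reusable in later estimates. Both arguments share the implicit assumption, which you rightly flag, that $f$ and its partials up to order $k$ are defined and continuous at $(x,\mu^{\pm})$; this is needed even to state $F(x,\pm1)=f(x,\mu^{\pm})$ and is built into the compactification setup.
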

\begin{proof}
By  assumption {\bf (H1)}, we can compute
\begin{eqnarray*}
\lim_{s\to \pm 1^{\mp}}F(x,s)=\lim_{s\to \pm 1^{\mp}}f(x,\mu(h(s)))
   =\lim_{t\to \pm \infty}f(x,\mu(t))=f(x,\mu^{\pm}).
\end{eqnarray*}
By the definition of $F(x,s)$,
we see that $F(x,s)$ is continuous at $s=\pm 1$.
Then $F(x,s)$ is $C^{k}$-smooth on the extended domain $U\times [-1,1]$ if
the left- and right-sided limits $s\to \pm 1^{\mp}$ exist
for all $n$-th partial derivatives, where $n=1,2,...,k$.

Now we use the Fa\`a di Bruno formula \eqref{FdB-formula}
to give the explicit expressions of all $n$-th ($n=1,2,...,k$) partial derivatives,
and then show that the conditions in this lemma
are sufficient for the $C^{k}$ smoothness of $F(x,s)$ on the extended domain $U\times [-1,1]$.
Set $\tilde{\mu}(s):=\mu(h(s))$ for $s\in (-1,1)$.
For each pair of integers $j$ and $n$ with $1\leq j+n\leq k$,
by the Fa\`a di Bruno formula \eqref{FdB-formula} we have
\begin{eqnarray}\label{eq:partial-d-F}
\frac{\partial^{j+n}}{\partial x^{j}\partial s^{n}}F(x,s)
=\sum_{l=1}^{n}\frac{\partial^{j+l}}{\partial x^{j}\partial \mu^{l}}f(x,\mu(h(s)))
\cdot \sum_{q(n,l)}n!\prod_{i=1}^{n}\frac{(\tilde{\mu}^{(i)}(s))^{\lambda_{i}}}{(\lambda_{i}!)(i!)^{\lambda_{i}}},
\ \ \ \ \ (x,s)\in U\times (-1,1),
\end{eqnarray}
where $q(n,l)$ are defined as below \eqref{df:S-n-l} and $\tilde{\mu}^{(i)}(s)=\frac{d^{i}}{ds^{i}}\tilde{\mu}(s)$.
Applying the Fa\`a di Bruno formula again yields that
for each $n$ with $1\leq n\leq k$,
\begin{eqnarray}\label{eq:d-mu}
\tilde{\mu}^{(n)}(s)
=\sum_{l=1}^{n}\frac{d^{l}}{dt^{l}}\mu(t)\cdot \sum_{q(n,l)}
  n!\prod_{i=1}^{n}\frac{(h^{(i)}(s))^{\lambda_{i}}}{(\lambda_{i}!)(i!)^{\lambda_{i}}}
=\sum_{l=1}^{n}\mu^{(l)}(t)\cdot S_{n}^{l}h(s),
\ \ \ \ \ s\in (-1,1),
\end{eqnarray}
where $S_{n}^{l}h(s)$ are defined by the formula \eqref{df:S-n-l}.
Noting that $t=h(\phi(t))$ for $t\in\mathbb{R}$,
we can compute
\begin{eqnarray*}
h^{(1)}(s)\cdot S_{1}^{1}\phi(t)=1,
\ \ \ \ \
\sum_{i=1}^{m}h^{(i)}(s)\cdot S_{m}^{i}\phi(t)=0 \ \mbox{ for }\ m=2,...,n.
\end{eqnarray*}
Solving for $h^{(i)}(s)$ in terms of $S_{m}^{l}\phi(t)$ yields
$h^{(i)}(s)=\varphi_{n,i}(t)$ for $i=1,...,n$,
where $\varphi_{n,i}(t)$ are defined in this lemma.
Substituting  $h^{(i)}(s)=\varphi_{n,i}(t)$ into \eqref{eq:d-mu} yields
\begin{eqnarray}\label{eq:mu-n}
\tilde{\mu}^{(n)}(s)
=\sum_{l=1}^{n}\mu^{(l)}(t)\cdot \sum_{q(n,l)} n!\prod_{i=1}^{n}\frac{(\varphi_{n,i}(t))^{\lambda_{i}}}{(\lambda_{i}!)(i!)^{\lambda_{i}}},
\ \ \ \ \ n=1,...,k.
\end{eqnarray}
Since $\mu(h(s))$ is continuous at $s=\pm 1$,
all partial derivatives $(\partial^{j+l}f(x,\mu(h(s)))/\partial x^{j}\partial \mu^{l})$
for $j$ and $l$ with $1\leq j+l\leq k$ are continuous at $s=\pm 1$.
Then by \eqref{eq:partial-d-F} and \eqref{eq:mu-n},
the left- and right-sided limits $s\to \pm 1^{\mp}$ exist
for each $(\partial^{j+n}F(x,s)/\partial x^{j}\partial s^{n})$ with $1\leq j+n\leq k$
if the limits \eqref{limt:F-trans-cond} exist.
This completes the proof.
\end{proof}

Next we give the transformation conditions for
the $C^{k}$ smoothness of the compactified system \eqref{eq:transf},
i.e., the functions $F(x,s)$ and $G(s)$ are $C^{k}$-smooth on the extended domain $U\times [-1,1]$.
By Lemmas \ref{lm:G-smooth} and \ref{lm:F-smooth}, one has the transformation conditions as follows:

\begin{theorem}\label{thm-transf-cond}
Consider the nonautonomous system \eqref{eq:NDS} and its compactified system \eqref{eq:comp}.
Suppose that the transformation $\phi$ satisfies assumption {\bf (H1)}. Let $h(s)=\phi^{-1}(s)$ for $s\in (-1,1)$.
Then the compactified system \eqref{eq:comp} is $C^{k}$-smooth on the extended domain $U\times [-1,1]$
if the limits \eqref{limt-G-cond} and \eqref{limt:F-trans-cond} exist.
\end{theorem}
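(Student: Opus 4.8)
The plan is to reduce the statement to the two lemmas already in hand. The compactified system \eqref{eq:comp} is the autonomous vector field on $U\times[-1,1]$ whose first $N$ components are given by $F(x,s)$ and whose last component is $G(s)$. A vector field is $C^{k}$ on a set precisely when each of its scalar component functions is $C^{k}$ there; moreover $G$ does not depend on $x$, so the regularity of the two blocks may be treated separately. Thus the whole assertion splits into showing that $F$ is $C^{k}$ on $U\times[-1,1]$ and that $G$ is $C^{k}$ on $[-1,1]$ (equivalently, as a function on $U\times[-1,1]$ that is constant in $x$).

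First I would recall what is already established in the interior. The discussion following the definition of $h$ shows, via the implicit function theorem applied to $\phi(t)=s$, that $h=\phi^{-1}$ is $C^{k+1}$ on $(-1,1)$, and hence --- together with the $C^{k}$ regularity of $f$ and $g$ --- that the transformed system \eqref{eq:transf} is $C^{k}$ on $U\times(-1,1)$. Consequently $F$ and $G$ are already $C^{k}$ on the open slab $U\times(-1,1)$, and the only locations where smoothness is in question are the two boundary slices $\{s=\pm1\}$. The entire difficulty is therefore concentrated at $s=\pm1$, where one must verify that every partial derivative of order at most $k$ extends continuously across the boundary.

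Those boundary checks are exactly the content of the two lemmas. Applying Lemma \ref{lm:G-smooth} under hypothesis \eqref{limt-G-cond} yields that $G$ is $C^{k}$ on $[-1,1]$; applying Lemma \ref{lm:F-smooth} under hypothesis \eqref{limt:F-trans-cond} yields that $F$ is $C^{k}$ on $U\times[-1,1]$. Combining the two gives that both component blocks of \eqref{eq:comp} are $C^{k}$ on the extended domain $U\times[-1,1]$, which is precisely the conclusion of Theorem \ref{thm-transf-cond}.

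I do not expect a genuine obstacle at the level of this theorem: the substantive work --- converting the existence of the boundary limits into the continuous extendability of all derivatives up to order $k$ --- has already been carried out inside Lemmas \ref{lm:G-smooth} and \ref{lm:F-smooth} by means of the Fa\`a di Bruno formula and the Cramer's-rule inversions that express $g^{(n)}$ and $\tilde{\mu}^{(n)}$ through the determinants of $\mathcal{M}_{n}h$, $\mathcal{Q}_{n}\phi$ and their column-replaced variants. The one point worth stating carefully is the elementary principle being invoked: a function that is $C^{k}$ on an open domain, continuous up to its closure, and all of whose partial derivatives up to order $k$ admit continuous one-sided limits at the boundary, is in fact $C^{k}$ on the closed domain (a standard mean-value argument). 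Since the lemmas are already phrased in terms of $C^{k}$-smoothness on the closed sets, this principle is absorbed into them, and the proof of Theorem \ref{thm-transf-cond} is merely their conjunction.
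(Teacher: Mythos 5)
Your proposal is correct and matches the paper's own treatment: Theorem \ref{thm-transf-cond} is stated there as an immediate consequence of Lemmas \ref{lm:G-smooth} and \ref{lm:F-smooth}, exactly the componentwise reduction (interior regularity from the implicit function theorem, boundary regularity from the two lemmas) that you describe.
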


To illustrate the conditions in the preceding theorem,
we give the explicit conditions for the $C^{1}$ and $C^{2}$ smoothness of
the compactified system \eqref{eq:comp} on the extended domain $U\times [-1,1]$.

\begin{corollary}\label{coro-1}
The following statements hold:
\begin{enumerate}
\item[{\bf (i)}]
The compactified system \eqref{eq:comp} is $C^{1}$-smooth on the extended domain $U\times[-1,1]$ if
\begin{eqnarray}
\lim_{s\to \pm 1^{\mp}}\left(-\frac{h''(s)}{(h'(s))^{2}}\right)
\!\!\!&=&\!\!\!
\lim_{t\to \pm \infty} \frac{\ddot{\phi}(t)}{\dot{\phi}(t)} \ \ \ \mbox{ exist},\label{cond-C-1-1}\\
\lim_{s\to \pm 1^{\mp}}\frac{d}{ds}\mu(h(s))
\!\!\!&=&\!\!\!
\lim_{t\to \pm \infty}\frac{\dot \mu(t)}{\dot \phi(t)} \ \ \ \mbox{ exist}.\label{cond-C-1-2}
\end{eqnarray}
\item[{\bf (ii)}]
The compactified system \eqref{eq:comp} is $C^{2}$-smooth on the extended domain $U\times[-1,1]$ if
\eqref{cond-C-1-1} and \eqref{cond-C-1-2} holds, and
\begin{eqnarray}
\lim_{s\to \pm 1^{\mp}}\left(\frac{2(h''(s))^{2}}{(h'(s))^{3}}-\frac{h^{(3)}(s)}{(h'(s))^{2}}\right)
\!\!\!&=&\!\!\!
\lim_{t\to \pm \infty} \left(\frac{\phi^{(3)}(t)}{(\dot{\phi}(t))^{2}}-\frac{(\ddot\phi(t))^{2}}{(\dot{\phi}(t))^{3}}\right) \ \ \ \mbox{ exist}, \label{cond-C-2-1}\\
\lim_{s\to \pm 1^{\mp}}\frac{d^{2}}{ds^{2}}\mu(h(s))
\!\!\!&=&\!\!\!
\lim_{t\to \pm \infty}\left(\frac{\ddot \mu(t)\dot \phi(t)-\dot\mu(t)\ddot \phi(t)}{(\dot \phi(t))^{3}}\right) \ \ \ \mbox{ exist}. \label{cond-C-2-2}
\end{eqnarray}
\end{enumerate}
\end{corollary}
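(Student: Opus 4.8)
The plan is to obtain both statements as direct specializations of Theorem~\ref{thm-transf-cond} to $k=1$ and $k=2$, so that the only genuine work is to evaluate the abstract limit conditions \eqref{limt-G-cond} and \eqref{limt:F-trans-cond} explicitly for $n=1$ and $n=2$. By Theorem~\ref{thm-transf-cond}, the compactified system \eqref{eq:comp} is $C^{k}$-smooth on $U\times[-1,1]$ provided those two families of limits exist for all $n=1,\dots,k$; hence for (i) I only need the $n=1$ instances, while for (ii) I need the $n=1$ and $n=2$ instances together.

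First I would record the small quantities $S_n^l$ from \eqref{df:S-n-l}. Enumerating the index sets $q(n,l)$ for $n\le 2$ gives $S_1^1 h=h'$, $S_2^1 h=h''$, $S_2^2 h=(h')^2$ (and analogously for $\phi$). Substituting these into the matrices introduced in Section~\ref{sec:two-side}, for $n=1$ one has $\det(\mathcal{M}_1 h)=(h')^3$ and $\det(\mathcal{M}_{1,2}h)=-h''$, so the left-hand side of \eqref{limt-G-cond} collapses to $-h''/(h')^2$; the $1\times1$ matrices give $\det(\mathcal{Q}_{1,1}\phi)/\det(\mathcal{Q}_1\phi)=\ddot\phi/\dot\phi$ on the right, which is exactly \eqref{cond-C-1-1}. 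For the $F$-condition \eqref{limt:F-trans-cond} at $n=1$, the single index set $q(1,1)=\{(1)\}$ leaves only $\dot\mu\cdot\varphi_{1,1}$, and since $\varphi_{1,1}=1/\dot\phi$ this equals $\dot\mu/\dot\phi=\frac{d}{ds}\mu(h(s))$, i.e.\ \eqref{cond-C-1-2}. This proves (i).

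For (ii) the $n=1$ limits are precisely \eqref{cond-C-1-1} and \eqref{cond-C-1-2}, so it remains to compute the $n=2$ limits. Using the $2\times2$ matrices I would get $\det(\mathcal{Q}_2\phi)=(\dot\phi)^3$ and $\det(\mathcal{Q}_{2,2}\phi)=\dot\phi\,\phi^{(3)}-(\ddot\phi)^2$, so the right-hand side of \eqref{limt-G-cond} at $n=2$ equals $\phi^{(3)}/(\dot\phi)^2-(\ddot\phi)^2/(\dot\phi)^3$; the equivalent $\mathcal{M}$-form, which one can cross-check simply by differentiating $g^{(1)}=-h''/(h')^2$, is $2(h'')^2/(h')^3-h^{(3)}/(h')^2$, giving \eqref{cond-C-2-1}. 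For \eqref{cond-C-2-2} I would evaluate $\varphi_{2,1}=1/\dot\phi$ and $\varphi_{2,2}=-\ddot\phi/(\dot\phi)^3$ from \eqref{matrix:Q-tilde-ni}, enumerate $q(2,1)=\{(0,1)\}$ and $q(2,2)=\{(2,0)\}$, and sum \eqref{limt:F-trans-cond} over $l=1,2$ to obtain $\tilde\mu^{(2)}(s)=\dot\mu\,\varphi_{2,2}+\ddot\mu\,\varphi_{2,1}^2=(\ddot\mu\dot\phi-\dot\mu\ddot\phi)/(\dot\phi)^3$, which coincides with $\frac{d^2}{ds^2}\mu(h(s))$. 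Invoking Theorem~\ref{thm-transf-cond} with $k=2$ then finishes (ii).

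The computations are elementary; the only place demanding care is the bookkeeping of the index sets $q(n,l)$ and the correct reading off of the determinants of $\mathcal{M}_{n,i}h$ and $\mathcal{Q}_{n,i}\phi$, in particular which column is replaced and by which vector. A useful sanity check at each step is that the $s$-variable ($\mathcal{M}$) and $t$-variable ($\mathcal{Q}$) expressions for $g^{(n)}$ must agree, as guaranteed by Lemma~\ref{lm:G-smooth}, and likewise the two forms of $\tilde\mu^{(n)}$ in Lemma~\ref{lm:F-smooth}; verifying this directly by differentiating $g^{(1)}$ and $\mu(h(s))$ is the quickest way to catch sign or index errors.
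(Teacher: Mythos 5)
Your proposal is correct and follows essentially the same route as the paper's own proof: specialize the conditions \eqref{limt-G-cond} and \eqref{limt:F-trans-cond} of Theorem~\ref{thm-transf-cond} to $n=1,2$ by enumerating the index sets $q(n,l)$, evaluating $S_n^l$, and computing the determinants of $\mathcal{M}_{n,i}h$, $\mathcal{Q}_{n}\phi$, $\mathcal{Q}_{n,i}\phi$ and $\tilde{\mathcal{Q}}_{n,i}\phi$ (the paper works out the $n=2$ case in exactly this way and leaves $n=1$ as "similar"). Your only deviation --- obtaining the $\mathcal{M}$-form at $n=2$ by differentiating $g^{(1)}(s)=-h''(s)/(h'(s))^{2}$ instead of expanding the $3\times 3$ determinants --- is a legitimate shortcut via identity \eqref{formula-gn-1}, and your final identity for $\tfrac{d^{2}}{ds^{2}}\mu(h(s))$ even corrects a small typographical slip in the paper's proof, where it is written as a first derivative.
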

\begin{proof}
By Lemmas \ref{lm:G-smooth} and \ref{lm:F-smooth},
we need to prove that the limits in \eqref{limt-G-cond} and \eqref{limt:F-trans-cond} with $n=1$
can be reduced to \eqref{cond-C-1-1} and \eqref{cond-C-1-2} respectively,
and with $n=2$ can be reduced to \eqref{cond-C-2-1} and \eqref{cond-C-2-2} respectively.
We prove only \eqref{cond-C-2-1} and \eqref{cond-C-2-2} and the proofs for the others are similar.

A direct computation yields
\begin{eqnarray*}
q(2,1)=\{(0,1)\},\ \ \  q(2,2)=\{(2,0)\},\ \ \
q(3,1)=\{(0,0,1)\},\ \ \ q(3,2)=\{(1,1,0)\},
\end{eqnarray*}
and
\begin{eqnarray*}
S_{2}^{1}h(s)=h''(s),\ \ \
S_{3}^{1}h(s)=h^{(3)}(s),\ \ \
S_{3}^{2}h(s)=3h'(s)h''(s),\ \ \
S_{i}^{i}h(s)=(h'(s))^{i}, \  \ \ \ \ i=1,2,3,
\end{eqnarray*}
from which  we obtain
\begin{eqnarray*}
{\rm det}(\mathcal{M}_{2,2}h(s))
\!\!\!&=&\!\!\!
{\rm det}\left(
\begin{array}{ccc}
S_{1}^{1}h(s) & 1 & 0 \\
S_{2}^{1}h(s) & 0 & 0\\
S_{3}^{1}h(s) & 0 & S_{3}^{3}h(s)
\end{array}
\right)=-S_{2}^{1}h(s)\cdot S_{3}^{3}h(s)=-(h'(s))^{3}h''(s),\\
{\rm det}(\mathcal{M}_{2,3}h(s))
\!\!\!&=&\!\!\!
{\rm det}\left(
\begin{array}{ccc}
S_{1}^{1}h(s) & 0 & 1 \\
S_{2}^{1}h(s) & S_{2}^{2}h(s) & 0\\
S_{3}^{1}h(s) & S_{3}^{2}h(s) & 0
\end{array}
\right)=S_{2}^{1}h(s)\cdot S_{3}^{2}h(s)-S_{2}^{2}h(s)\cdot S_{3}^{1}h(s)\\
\!\!\!&=&\!\!\!
3h'(s)(h''(s))^{2}-(h'(s))^2h^{(3)}(s).
\end{eqnarray*}
Then we get
\begin{eqnarray*}
\begin{aligned}
&  (\mathcal{M}_{2,2}h(s)) \cdot S_{2}^{1}h(s)+(\mathcal{M}_{2,3}h(s)) \cdot S_{2}^{2}h(s)\\
&\ \ \ =
-(h'(s))^{3}(h''(s))^{2}+\{3h'(s)(h''(s))^{2}-(h'(s))^2h^{(3)}(s)\}(h'(s))^{2}\\
&\ \ \ =
2(h'(s))^{3}(h''(s))^{2}-(h'(s))^{4}h^{(3)}(s).
\end{aligned}
\end{eqnarray*}
This together with
\begin{eqnarray*}
{\rm det}(\mathcal{M}_{2}h(s))=
{\rm det}\left(
\begin{array}{ccc}
S_{1}^{1}h(s) & 0 & 0\\
S_{2}^{1}h(s) & S_{2}^{2}h(s) & 0\\
S_{3}^{1}h(s) & S_{3}^{2}h(s) & S_{3}^{2}h(s)
\end{array}
\right)
=(h'(s))^{6}
\end{eqnarray*}
yields that
\begin{eqnarray}\label{formula:C-2-1}
\frac{1}{{\rm det}(\mathcal{M}_{2}h(s))}\sum_{i=1}^{2}{\rm det} (\mathcal{M}_{2,i+1}h(s)) \cdot S_{2}^{i}h(s)
=\frac{2(h''(s))^{2}}{(h'(s))^{3}}-\frac{h^{(3)}(s)}{(h'(s))^{2}}.
\end{eqnarray}
By \eqref{matrix:Q-n} and \eqref{matrix:Q-n-i}, we have
\begin{eqnarray}\label{formula:C-2-2}
\begin{aligned}
{\rm det}(\mathcal{Q}_{2}\phi(t))
&=
{\rm det}\left(
\begin{array}{cc}
S_{1}^{1}\phi(t) & 0   \\
S_{2}^{1}\phi(t) & S_{2}^{2}\phi(t)
\end{array}
\right)=(\dot{\phi}(t))^{3},\\
{\rm det}(\mathcal{Q}_{2,2}\phi(t))
&=
{\rm det}\left(
\begin{array}{cc}
S_{1}^{1}\phi(t) & \phi^{(2)}(t)  \\
S_{2}^{1}\phi(t) & \phi^{(3)}(t)
\end{array}
\right)=\dot{\phi}(t)\phi^{(3)}(t)-(\ddot\phi(t))^{2}.
\end{aligned}
\end{eqnarray}
Hence by \eqref{formula:C-2-1} and \eqref{formula:C-2-2},
the limits in \eqref{limt-G-cond} with $n=2$ is reduced to \eqref{cond-C-2-1}.

Now we consider \eqref{cond-C-2-2}.
By the definitions of $\mathcal{Q}_{n,}\phi(t)$ in \eqref{matrix:Q-n-i} and $\tilde{\mathcal{Q}}_{n,i}\phi(t)$ in \eqref{matrix:Q-tilde-ni},
we get
\begin{eqnarray*}
\varphi_{2,1}(t)
\!\!\!&=&\!\!\!
\frac{{\rm det}(\tilde{\mathcal{Q}}_{2,1}\phi(t))}{{\rm det} (\mathcal{Q}_{2}\phi(t))}
=\frac{(\dot \phi(t))^{2}}{(\dot \phi(t))^{3}}=\frac{1}{\dot \phi(t)},\\
\varphi_{2,2}(t)
\!\!\!&=&\!\!\!
\frac{{\rm det}(\tilde{\mathcal{Q}}_{2,2}\phi(t))}{{\rm det} (\mathcal{Q}_{2}\phi(t))}
=-\frac{\ddot \phi(t)}{(\dot \phi(t))^{3}}.
\end{eqnarray*}
Recalling that $q(2,1)=\{(0,1)\}$ and $q(2,2)=\{(2,0)\}$,
by the proof of Lemma \ref{lm:F-smooth} we get
\begin{eqnarray*}
\frac{d}{ds}\mu(h(s))
=\frac{\ddot \mu(t)\dot \phi(t)-\dot\mu(t)\ddot \phi(t)}{(\dot \phi(t))^{3}},\ \ \ \ t=h(s).
\end{eqnarray*}
Hence the limits in \eqref{limt:F-trans-cond} with  $n=2$ is reduced to \eqref{cond-C-2-2}.
This completes the proof.
\end{proof}

\subsubsection{A simple criterion for the $C^{k}$ extension}

In this subsection,
our goal is to further give a simple criterion on the time-dependent term $\mu$ for the $C^{k}$ extension.
We first introduce two auxiliary functions as follows:
\begin{eqnarray*}
\exp^{m}(t):=\underbrace{\exp(\exp(\cdots\exp(t)\cdots))}_{m\ \mbox{times}},
\ \ \ \ \
\ln^{m}(t):=\underbrace{\ln(\ln(\cdots\ln(t)\cdots))}_{m\ \mbox{times}},
\end{eqnarray*}
where $m$ is a positive integer.
For each integer $m\geq 1$, we define $\Phi_{m}(t)$ by
\begin{eqnarray*}
\Phi_{m}(t):=-(\ln^{m}(|t|))^{-1},
\end{eqnarray*}
whose domain $\mathcal{D}(\Phi_{m})$ is given by
\begin{eqnarray*}
\mathcal{D}(\Phi_{m})
=\left\{
\begin{array}{ll}
\{t\in\mathbb{R}: \ |t|>0\}, &\ \ \ \mbox{ if }\ m=1,\\
\{t\in\mathbb{R}: \ |t|>\exp^{m-1}(1)\}, &\ \ \ \mbox{ if }\ m\geq 2.
\end{array}
\right.
\end{eqnarray*}
By a direct computation, we get
\begin{eqnarray*}
\dot \Phi_{m}(t)
={\rm sgn}(t)(|t|\ln^{m}(|t|)\prod_{i=1}^{m}\ln^{i}|t|)^{-1}
\end{eqnarray*}
for $t\in \mathcal{D}(\Phi_{m})$,
where ${\rm sgn}(t)=1$ if $t>0$ and  ${\rm sgn}(t)=-1$ if $t<0$.
Then for each integer $m\geq 1$,
the function $\Phi_{m}$ satisfies the limits:
\begin{eqnarray}\label{limt:Phi-dotPhi}
\lim_{t\to \pm \infty}\Phi_{m}(t)=0,
\ \ \ \ \
\lim_{t\to \pm \infty}\dot \Phi_{m}(t)=0.
\end{eqnarray}
Furthermore, we have the next lemma.

\begin{lemma}\label{lm:Phi-m-prpty}
For each integer $m\geq 1$, the function $\Phi_{m}$ satisfies
\begin{eqnarray}\label{limts:frac}
\lim_{t\to\pm\infty}\frac{\Phi^{(n+1)}_{m}(|t|)}{(\dot\Phi_{m}(|t|))^{n}}=0
\end{eqnarray}
for every integer $n$ with $1\leq n\leq k$.
\end{lemma}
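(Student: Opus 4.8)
The plan is to reduce the two-sided limit to a one-sided one and then bound every derivative of $\dot\Phi_m$ by an induction that carefully tracks powers of $t$. Since the quotient in \eqref{limts:frac} is evaluated at $|t|$, and $|t|\to+\infty$ as $t\to\pm\infty$, it suffices to prove
\[
\lim_{\tau\to+\infty}\frac{\Phi_m^{(n+1)}(\tau)}{(\dot\Phi_m(\tau))^{n}}=0,\qquad 1\le n\le k,
\]
for $\tau>0$ large, where all the iterated logarithms $L_i:=\ln^i(\tau)$ ($0\le i\le m$, with $L_0:=\tau$) are positive and tend to $+\infty$. Writing $w:=\dot\Phi_m$ on the positive axis, one has $\Phi_m^{(n+1)}=w^{(n)}$, and the explicit formula for $\dot\Phi_m$ preceding the lemma gives $w=(L_m)^{-2}\prod_{i=0}^{m-1}L_i^{-1}$, whence $1/w=\tau\,L_1\cdots L_{m-1}L_m^{2}$.

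First I would record the logarithmic derivative $g:=w'/w$. Since $L_i'/L_i=\prod_{r=0}^{i}L_r^{-1}=:\rho_i$, differentiating $\log w$ yields $g=-2\rho_m-\sum_{j=0}^{m-1}\rho_j$, a finite sum of the functions $\rho_j=\prod_{i=0}^{j}L_i^{-1}$. The heart of the argument is the order estimate
\[
\rho_j^{(r)}(\tau)=O(\tau^{-1-r})\qquad(0\le j\le m,\ r\ge0),
\]
which I would prove by induction on $r$, simultaneously in $j$, using the identity $\rho_j'=-\rho_j\sum_{i=0}^{j}\rho_i$ (a consequence of $\rho_i=L_i'/L_i$) together with the Leibniz rule; the base case $\rho_j=O(\tau^{-1})$ is immediate since $\rho_j=\tau^{-1}(L_1\cdots L_j)^{-1}\le\tau^{-1}$ eventually. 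Because $g$ is a finite combination of the $\rho_j$, this at once gives $g^{(r)}(\tau)=O(\tau^{-1-r})$ for every $r\ge0$.

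Next I would establish, by induction on $n$, that $w^{(n)}=q_n\,w$ where $q_n^{(r)}(\tau)=O(\tau^{-n-r})$ for all $r\ge0$ (so in particular $q_n=O(\tau^{-n})$). The recursion is simply $q_{n+1}=q_n'+q_n g$, since $w^{(n+1)}=(q_nw)'=(q_n'+q_ng)w$; the full derivative bounds must be carried along the induction because controlling $q_{n+1}$ requires $q_n'$, not merely $q_n$. The order bounds then propagate: $q_n'=O(\tau^{-n-1})$ and $q_ng=O(\tau^{-n})O(\tau^{-1})=O(\tau^{-n-1})$, and the estimates for the higher derivatives of $q_{n+1}$ follow from those of $q_n$ and $g$ through Leibniz, every cross term again losing exactly one power of $\tau$ per differentiation.

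Combining these facts gives
\[
\frac{\Phi_m^{(n+1)}(\tau)}{(\dot\Phi_m(\tau))^{n}}=\frac{w^{(n)}}{w^{n}}=\frac{q_n}{w^{\,n-1}}=O\!\left(\tau^{-n}\right)\bigl(\tau\,L_1\cdots L_{m-1}L_m^{2}\bigr)^{n-1}=O\!\left(\frac{(L_1\cdots L_{m-1}L_m^{2})^{\,n-1}}{\tau}\right),
\]
which tends to $0$ because $\tau$ dominates any fixed product of iterated logarithms. I expect the main obstacle to be the bookkeeping in the coupled induction for the $\rho_j^{(r)}$: one must check that differentiation never improves the exponent by more than one power of $\tau$, uniformly in $j$, and that the finitely many Leibniz cross terms all carry the claimed order. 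Once these uniform estimates are in place, the two subsequent inductions and the concluding computation are routine.
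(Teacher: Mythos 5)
Your proof is correct, and it takes a genuinely different route from the paper's. The paper never uses the closed-form product structure of $\dot\Phi_m$: it starts from the implicit identity $\Phi_m(t)\ln^m(t)=-1$, differentiates it repeatedly by Leibniz to get a linear relation among $\Phi_m^{(1)},\dots,\Phi_m^{(n+1)}$ whose coefficients are derivatives of $t\prod_{i=1}^m\ln^i t$, and then divides by $(\dot\Phi_m(t))^n\,(t\prod_{i=1}^m\ln^i t)$ to obtain the recursive inequality
\[
\Bigl|\tfrac{\Phi_m^{(n+1)}(t)}{(\dot\Phi_m(t))^n}\Bigr|\;\le\; t^{-1/3}+t^{1/3^n}\sum_{i=1}^{n-1}\Bigl|\tfrac{\Phi_m^{(i+1)}(t)}{(\dot\Phi_m(t))^i}\Bigr|,
\]
which it closes by induction on $n$, using $\sum_{i\ge 2}3^{-i}=1/6$ to keep the accumulated positive powers of $t$ dominated by the decaying ones. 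You instead exploit the explicit formula $w=\dot\Phi_m=(L_0L_1\cdots L_{m-1}L_m^2)^{-1}$, the closed system $\rho_j'=-\rho_j\sum_{i\le j}\rho_i$ satisfied by the logarithmic derivatives, and the uniform order estimates $\rho_j^{(r)}=O(\tau^{-1-r})$ (the induction you sketch does close: each Leibniz cross term is a product of two factors of orders $O(\tau^{-1-l})$ and $O(\tau^{-1-(r-l)})$, hence $O(\tau^{-2-r})$, and there are only finitely many $j$, so uniformity is automatic). This yields the multiplicative representation $w^{(n)}=q_n w$ with $q_n^{(r)}=O(\tau^{-n-r})$, and hence the sharper, more transparent conclusion that each differentiation of $\dot\Phi_m$ costs exactly one power of $\tau$, so the ratio in the lemma is $O\bigl((L_1\cdots L_{m-1}L_m^2)^{n-1}/\tau\bigr)$ with an explicit rate. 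The trade-off: your argument must carry derivative bounds of all orders through two nested inductions, while the paper's needs only the single functional identity and rough growth comparisons, at the cost of more ad hoc exponent bookkeeping ($t^{-1/3}$, $t^{1/3^n}$, $t^{1/6}$). Your reduction of the two-sided limit to $\tau=|t|\to+\infty$ is also legitimate, since the statement evaluates both quotients at $|t|$.
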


The proof of this lemma is given in Appendix B.

\begin{proposition}\label{prop-criterion}
Suppose that there exists a positive integer $m$ such that the limits
\begin{eqnarray}\label{limt:criterion-1}
\lim_{t\to\pm \infty}
\sum_{l=1}^{n}\mu^{(l)}(t)\cdot \sum_{q(n,l)} n!\prod_{i=1}^{n}\frac{(\tilde{\varphi}_{m,n,i}(t))^{\lambda_{i}}}{(\lambda_{i}!)(i!)^{\lambda_{i}}}
\end{eqnarray}
exist for all $n=1,...,k$, where
\begin{eqnarray*}
\tilde{\varphi}_{m,n,i}(t):=\frac{{\rm det}(\tilde{\mathcal{Q}}_{n,i}\Phi_{m}(t))}{{\rm det} (\mathcal{Q}_{n}\Phi_{m}(t))},
\ \ \ \ \  i=1,...,n, \ \ t\in\mathcal{D}(\Phi_{m}).
\end{eqnarray*}
Then there exists a transformation $\phi$ with the properties in {\bf (H1)} such that
the compactified system \eqref{eq:comp} is $C^{k}$-smooth on the extended domain $U\times[-1,1]$.
\end{proposition}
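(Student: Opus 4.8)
The plan is to manufacture the transformation $\phi$ directly from the auxiliary function $\Phi_m$ supplied by the hypothesis, so that near $t=\pm\infty$ the derivatives of $\phi$ coincide (up to an overall sign) with those of $\Phi_m$, and then to invoke Theorem \ref{thm-transf-cond}. Concretely, I would fix $T>0$ large enough that $T\in\mathcal{D}(\Phi_m)$ and $|\Phi_m(\pm 2T)|<2$, and set
\begin{eqnarray*}
\phi(t)=\left\{
\begin{aligned}
&\,1+\Phi_m(t), && t\geq 2T,\\
&\ \text{(smooth monotone interpolation)}, && -2T<t<2T,\\
&-1-\Phi_m(t), && t\leq -2T,
\end{aligned}
\right.
\end{eqnarray*}
where on $[-2T,2T]$ I choose any $C^{k+1}$ function whose $(k+1)$-jets match those of the two tail formulas at $t=\pm 2T$ and whose derivative is everywhere positive; such a monotone gluing exists by a standard construction, since the tail jets have positive first derivative. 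Using \eqref{limt:Phi-dotPhi} one checks $\phi(t)\to\pm1$, $\dot\phi>0$ on $\mathbb{R}$, $\dot\phi(t)\to0$ as $t\to\pm\infty$, and $\phi$ is $C^{k+1}$, so $\phi$ satisfies {\bf (H1)}. The decisive feature is that $\phi^{(j)}(t)=\Phi_m^{(j)}(t)$ for $t\geq 2T$ and $\phi^{(j)}(t)=-\Phi_m^{(j)}(t)$ for $t\leq -2T$, for every $j\geq 1$.

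Next I would verify the condition \eqref{limt:F-trans-cond}. On the tails every quantity entering \eqref{limt:F-trans-cond} is built from the derivatives $\phi^{(j)}$ through the determinants $\det(\tilde{\mathcal{Q}}_{n,i}\phi)$ and $\det(\mathcal{Q}_n\phi)$; substituting $\phi^{(j)}=\Phi_m^{(j)}$ on the right tail gives $\varphi_{n,i}=\tilde\varphi_{m,n,i}$ verbatim, so the limit as $t\to+\infty$ in \eqref{limt:F-trans-cond} is exactly the hypothesized limit \eqref{limt:criterion-1}. On the left tail $\phi^{(j)}=-\Phi_m^{(j)}$; since the $l$-th column of $\mathcal{Q}_n\phi$ is homogeneous of degree $l$ in the derivatives, a short sign bookkeeping yields $\varphi_{n,i}=(-1)^{i}\tilde\varphi_{m,n,i}$, whence $\prod_i(\varphi_{n,i})^{\lambda_i}=(-1)^{n}\prod_i(\tilde\varphi_{m,n,i})^{\lambda_i}$ on the index set $q(n,l)$ (because $\sum_i i\lambda_i=n$ there). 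Thus the $t\to-\infty$ limit in \eqref{limt:F-trans-cond} is $(-1)^n$ times the hypothesized one and in particular exists. Hence \eqref{limt:F-trans-cond} holds for all $n=1,\dots,k$.

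It then remains to check \eqref{limt-G-cond}, which depends only on $\phi$. By \eqref{formula-gn-2} one has $g^{(n)}(s)=\det(\mathcal{Q}_{n,n}\phi(t))/\det(\mathcal{Q}_n\phi(t))$, equivalently $g^{(n)}=(\dot\phi^{-1}\,d/dt)^{n}\dot\phi$, and on the tails this is computed from the $\Phi_m^{(j)}$ (left-tail sign flips contributing only an overall factor, as above). A weight count shows that $g^{(n)}$ is a sum of rational monomials in $\dot\phi,\phi^{(2)},\dots,\phi^{(n+1)}$ of net weight one: each term is of the form $(\phi^{(2)})^{\lambda_2}\cdots(\phi^{(n+1)})^{\lambda_{n+1}}/(\dot\phi)^{p}$ with $\sum_{j\geq2}(j-1)\lambda_j=n$ and net homogeneity forcing overall decay like $t^{-1}$ modulo slowly varying logarithmic factors. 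Combining this with the decay estimates of Lemma \ref{lm:Phi-m-prpty} gives $\lim_{t\to\pm\infty}g^{(n)}(s)=0$ for every $n=1,\dots,k$, so \eqref{limt-G-cond} holds; with both conditions verified, Theorem \ref{thm-transf-cond} yields that the compactified system \eqref{eq:comp} is $C^{k}$-smooth on $U\times[-1,1]$, completing the proof.

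The hard part is precisely the $G$-condition \eqref{limt-G-cond}. Although $g^{(1)}=\Phi_m^{(2)}/\dot\Phi_m\to0$ is immediate from \eqref{limts:frac}, for $n\geq2$ the expression $g^{(n)}$ contains cross terms such as $(\Phi_m^{(2)})^2/(\dot\Phi_m)^3$ (visible already in \eqref{cond-C-2-1}) whose individual factors $\Phi_m^{(2)}/(\dot\Phi_m)^2$ diverge; so \eqref{limts:frac} cannot be applied naively term by term, and one must exploit the net-weight-one structure of $g^{(n)}$ together with the precise rate built into $\Phi_m$ to see that every term still decays like $t^{-1}$ up to logarithmic factors. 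Making this decay argument rigorous, and arranging the $C^{k+1}$ monotone gluing on the middle interval, are the two places I expect to require genuine care.
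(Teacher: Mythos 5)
Your proposal is correct and is essentially the paper's own proof: the paper constructs $\phi$ by the same $\pm 1\pm\Phi_{m}$ tail-gluing with a $C^{k+1}$ monotone middle piece (cutoffs at $\pm(\exp^{m-1}(1)+1)$, where your ``$T$ large enough'' is actually the safer choice), checks {\bf (H1)}, reduces the $F$-condition \eqref{limt:F-trans-cond} to the hypothesis \eqref{limt:criterion-1} exactly as you do (your explicit $(-1)^{n}$ sign bookkeeping on the left tail is left implicit there), disposes of the $G$-condition by rewriting $\det(\mathcal{Q}_{n,n}\phi)/\det(\mathcal{Q}_{n}\phi)$ as a determinant whose rows are divided by powers of $\dot\phi$ and invoking Lemma \ref{lm:Phi-m-prpty} --- the determinant counterpart of your weight-one monomial count --- and concludes via Theorem \ref{thm-transf-cond}. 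The difficulty you flag in the $G$-step is real but applies verbatim to the paper's one-line citation of Lemma \ref{lm:Phi-m-prpty}: in both formulations the cross terms (equivalently, the entries of the normalized determinant off its last column, e.g. $(\Phi_{m}^{(2)})^{2}/(\dot\Phi_{m})^{3}$) are not controlled by the limits \eqref{limts:frac} alone, and one must use the quantitative rates coming out of the Appendix B argument (such as $|\ddot\Phi_{m}/\dot\Phi_{m}|=O(1/|t|)$ together with the at-most-polylogarithmic growth of $1/(|t|\,\dot\Phi_{m})$), which is precisely what your net-weight-one decay argument calls for.
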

\begin{proof}
For a fixed integer $m\geq 1$, we define $\phi:\mathbb{R}\to (-1,1)$ by
\begin{eqnarray}\label{df:phi-crit}
\phi(t)=\left\{
\begin{array}{ll}
1+\Phi_{m}(t), &\ \ \ \mbox{ if }\ t>\exp^{m-1}(1)+1,\\
\tilde{\phi}_{m}(t), &\ \ \ \mbox{ if }\ -\exp^{m-1}(1)-1\leq t\leq \exp^{m-1}(1)+1,\\
-1-\Phi_{m}(t), &\ \ \ \mbox{ if }\ t<-\exp^{m-1}(1)-1,
\end{array}
\right.
\end{eqnarray}
where we set $\exp^{0}(1):=1$.
By \eqref{limt:Phi-dotPhi},
we can construct a function $\tilde{\phi}_{m}$ such that
$\phi$ is $C^{k+1}$ and strictly increasing on $\mathbb{R}$.
Then $\phi$ satisfies assumption {\bf (H1)}.

Recalling that ${\rm det}(\mathcal{Q}_{n}\phi(t))=\prod_{i=1}^{n}(\dot \phi(t))^{i}$,
we have
\begin{eqnarray*}
\frac{{\rm det}(\mathcal{Q}_{n,n}\phi(t))}{{\rm det} (\mathcal{Q}_{n}\phi(t))}
={\rm det}\left(
\begin{array}{cccccc}
S_{1}^{1}\phi(t)/\dot \phi(t) & 0 & \cdot\cdot\cdot & 0 & \phi^{(2)}(t)/\dot\phi(t) \\
S_{2}^{1}\phi(t)/(\dot \phi(t))^{2} & S_{2}^{2}\phi(t)/(\dot \phi(t))^{2} & \cdot\cdot\cdot & 0 & \phi^{(3)}(t)/(\dot \phi(t))^{2}  \\
\cdot\cdot\cdot & \cdot\cdot\cdot & \cdot\cdot\cdot & \cdot\cdot\cdot & \cdot\cdot\cdot \\
S_{n}^{1}\phi(t)/(\dot \phi(t))^{n} & S_{n}^{2}\phi(t)/(\dot \phi(t))^{n} & \cdot\cdot\cdot&  S_{n-1}^{n}\phi(t)/(\dot \phi(t))^{n} & \phi^{(n+1)}(t)/(\dot \phi(t))^{n}
\end{array}
\right).
\end{eqnarray*}
Then by Lemma \ref{lm:Phi-m-prpty},
\[
\lim_{t\to \pm \infty}\frac{{\rm det}(\mathcal{Q}_{n,n}\phi(t))}{{\rm det} (\mathcal{Q}_{n}\phi(t))}=0,
\ \ \ \ \ n=1,...,k.
\]
Using \eqref{limt:criterion-1} and  recalling the definition of $\phi$ in \eqref{df:phi-crit},
we obtain that
the limits \eqref{limt:F-trans-cond} exist for all $n=1,...,k$.
Therefore, the proof is finished by Theorem \ref{thm-transf-cond}.
\end{proof}

By a direction computation, we have a corollary of Proposition \ref{prop-criterion}.
\begin{corollary}
The following statements hold:
\begin{enumerate}
\item[{\bf (i)}]
The compactified system \eqref{eq:comp} is $C^{1}$-smooth on the extended domain $U\times[-1,1]$ if
there exists a positive integer $m$ such that the limits
\begin{eqnarray*}
\lim_{t\to \pm \infty}\frac{\dot \mu(t)}{\dot \Phi_{m}(t)}\ \ \  \mbox{ exist}.
\end{eqnarray*}

\item[{\bf (ii)}]
The compactified system \eqref{eq:comp} is $C^{2}$-smooth on the extended domain $U\times[-1,1]$ if
there exists a positive integer $m$ such that the limits
\begin{eqnarray*}
\lim_{t\to \pm \infty}\frac{\dot \mu(t)}{\dot \Phi_{m}(t)},
\ \ \ \ \
\lim_{t\to \pm \infty}\frac{\ddot \mu(t)\dot \Phi_{m}(t)-\dot\mu(t)\ddot \Phi_{m}(t)}{(\dot \Phi_{m}(t))^{3}}
\ \ \
\mbox{ exist}.
\end{eqnarray*}
\end{enumerate}
\end{corollary}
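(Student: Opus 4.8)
The plan is to read off both statements directly from Proposition \ref{prop-criterion} by specializing to $k=1$ and $k=2$ and unwinding the criterion \eqref{limt:criterion-1} for the first one or two values of $n$. All the combinatorial and determinant data needed have in fact already been assembled in the proof of Corollary \ref{coro-1}, so the argument is a matter of bookkeeping rather than of new estimates.

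For {\bf (i)} I would invoke Proposition \ref{prop-criterion} with $k=1$, so that only the case $n=1$ of \eqref{limt:criterion-1} is relevant. The index set $q(1,1)=\{(1)\}$ is a singleton, hence the inner sum reduces to the single factor $\tilde{\varphi}_{m,1,1}(t)$. Evaluating the $1\times 1$ matrices gives ${\rm det}(\mathcal{Q}_{1}\Phi_{m}(t))=S_{1}^{1}\Phi_{m}(t)=\dot\Phi_{m}(t)$ and ${\rm det}(\tilde{\mathcal{Q}}_{1,1}\Phi_{m}(t))=1$, whence $\tilde{\varphi}_{m,1,1}(t)=1/\dot\Phi_{m}(t)$. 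Substituting this into \eqref{limt:criterion-1} with $n=1$ turns the hypothesis of Proposition \ref{prop-criterion} into the existence of $\lim_{t\to\pm\infty}\dot\mu(t)/\dot\Phi_{m}(t)$, which is precisely the condition in {\bf (i)}.

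For {\bf (ii)} I would apply Proposition \ref{prop-criterion} with $k=2$, so both $n=1$ and $n=2$ are required; the $n=1$ part coincides with {\bf (i)}. For $n=2$ I would reuse the index sets $q(2,1)=\{(0,1)\}$, $q(2,2)=\{(2,0)\}$ and the $2\times 2$ and $3\times 3$ determinant identities already computed in the proof of Corollary \ref{coro-1}, now with $\phi$ replaced by $\Phi_{m}$. These yield $\tilde{\varphi}_{m,2,1}(t)=1/\dot\Phi_{m}(t)$ and $\tilde{\varphi}_{m,2,2}(t)=-\ddot\Phi_{m}(t)/(\dot\Phi_{m}(t))^{3}$. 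Expanding the double sum in \eqref{limt:criterion-1} for $n=2$ then leaves the single combination $\dot\mu(t)\,\tilde{\varphi}_{m,2,2}(t)+\ddot\mu(t)\,(\tilde{\varphi}_{m,2,1}(t))^{2}$; substituting the two expressions above and clearing denominators collapses this to $(\ddot\mu(t)\dot\Phi_{m}(t)-\dot\mu(t)\ddot\Phi_{m}(t))/(\dot\Phi_{m}(t))^{3}$, so the $n=2$ requirement becomes exactly the second limit in {\bf (ii)}.

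Since every step is an algebraic unwinding of Proposition \ref{prop-criterion}, I do not expect a genuine obstacle. The only delicate point is the combinatorial bookkeeping of the partitions $q(n,l)$ and of the coefficients $n!/\prod(\lambda_{i}!(i!)^{\lambda_{i}})$, but these, together with the matrix entries $S_{n}^{i}\Phi_{m}$ that determine $\tilde{\varphi}_{m,n,i}$, are precisely those already handled in Corollary \ref{coro-1}, so nothing new must be estimated.
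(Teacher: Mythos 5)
Your proposal is correct and follows exactly the route the paper intends: the paper derives this corollary from Proposition \ref{prop-criterion} "by a direct computation," and your unwinding of \eqref{limt:criterion-1} for $n=1,2$ (giving $\tilde{\varphi}_{m,1,1}=1/\dot\Phi_{m}$, $\tilde{\varphi}_{m,2,1}=1/\dot\Phi_{m}$, $\tilde{\varphi}_{m,2,2}=-\ddot\Phi_{m}/(\dot\Phi_{m})^{3}$, and hence the combination $(\ddot\mu\,\dot\Phi_{m}-\dot\mu\,\ddot\Phi_{m})/(\dot\Phi_{m})^{3}$) is precisely that computation, matching the analogous determinant evaluations in the proof of Corollary \ref{coro-1}.
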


\subsection{One-sided compactifications}
\label{sec:one-side}

In this subsection,
we consider the cases that the time-dependent term $\mu$
is asymptotically constant with future limit $\mu^{+}$ or past limit $\mu^{-}$.
We aim to realize one-sided compactifications by the similar method as above.
No confusion should arise, we still use $\phi$ to denote the related transformations in the different cases.
We assume that the corresponding transformation $\phi$ satisfies the following:
\begin{enumerate}
\item[{\bf (H2)}] If $\mu$ is asymptotically constant with one-sided limit $\mu^{\pm}$,
i.e., future limit $\mu^{+}$ or past limit $\mu^{-}$,
then the transformation $\phi:\mathbb{R} \to \mathbb{R}$ is $C^{k+1}$-smooth on $\mathcal{I}_{\pm}$ and satisfies
\begin{eqnarray*}
\lim_{t\to \pm \infty}\phi(t)=\pm1,
\ \ \ \ \
\dot \phi(t)>0 \ \mbox{ for }\  t\in \mathcal{I}_{\pm},
\ \ \ \ \
\lim_{t\to \pm\infty}\dot \phi(t)=0,
\end{eqnarray*}
where $\mathcal{I}_{+}=[t_{+},+\infty)$ and $\mathcal{I}_{-}=(-\infty,t_{-}]$.
\end{enumerate}

Let $s=\phi(t)$, $s_{\pm}=\phi(t_{\pm})$ and
\begin{eqnarray*}
\mathcal{S}_{+}=[s_{+},1),
\ \ \ \ \
\mathcal{S}_{-}=(-1,s_{-}].
\end{eqnarray*}
Then we obtain one-sided compactified systems as follows:
\begin{eqnarray}\label{eq:comp+}
\begin{aligned}
\dot x &=F^{\pm}(x,s),\\
\dot s &=G^{\pm}(s),
\end{aligned}
\ \ \ \ \ \ \
(x,s)\in U\times \bar{\mathcal{S}}_{\pm},
\end{eqnarray}
where
\begin{eqnarray*}
F^{\pm}(x,s)\!\!\!&=&\!\!\!\left\{
\begin{aligned}
&f(x,\mu(h^{\pm}(s))), && \mbox{ if }\ s\in \mathcal{S}_{\pm},\\
&f(x,\mu^{\pm}), && \mbox{ if }\ s=\pm 1,
\end{aligned}
\right.
\\
G^{\pm}(s)\!\!\!&=&\!\!\!\left\{
\begin{aligned}
& \dot\phi(h^{\pm}(s)), && \mbox{ if }\ s\in \mathcal{S}_{\pm},\\
& 0, && \mbox{ if }\ s=\pm1,
\end{aligned}
\right.
\end{eqnarray*}
$h^{\pm}(s)=\phi^{-1}(s)$ for $s\in \mathcal{S}_{\pm}$
and $\bar{\mathcal{S}}_{\pm}$ denote the closures of $\mathcal{S}_{\pm}$.
The compactified system related to the case that $\mu$ is asymptotically constant with future limit $\mu^{+}$
(resp. past limit $\mu^{-}$) is called the right-sided (resp. left-sided) compactified system.
Similarly to the proof of Theorem \ref{thm-transf-cond},
we have the transformation conditions as follows.

\begin{theorem}\label{thm-transf-cond-pm}
Consider the nonautonomous system \eqref{eq:NDS} and its one-sided compactified system \eqref{eq:comp+}.
Suppose that the transformation $\phi$ satisfies assumption {\bf (H2)}.
Let $h^{\pm}(s)=\phi^{-1}(s)$ for $s\in\mathcal{S}_{\pm}$.
Then the following statements hold:

\begin{enumerate}
\item[{\bf (i)}]
The right-sided compactified system \eqref{eq:comp+} is $C^{k}$-smooth on the extended domain $U\times \bar{\mathcal{S}}_{+}$
if the limits
\begin{eqnarray*}
&& \lim_{s\to 1^{-}}\frac{1}{{\rm det}(\mathcal{M}_{n}h^{+}(s))}\sum_{i=1}^{n}{\rm det} (\mathcal{M}_{n,i+1}h^{+}(s)) \cdot S_{n}^{i}h^{+}(s)
=\lim_{t\to +\infty}\frac{{\rm det}(\mathcal{Q}_{n,n}\phi(t))}{{\rm det} (\mathcal{Q}_{n}\phi(t))},\\
&& \lim_{t\to +\infty}
\sum_{l=1}^{n}\mu^{(l)}(t)\cdot \sum_{q(n,l)} n!\prod_{i=1}^{n}\frac{(\varphi_{n,i}(t))^{\lambda_{i}}}{(\lambda_{i}!)(i!)^{\lambda_{i}}},
\end{eqnarray*}
exist for all $n=1,...,k$.

\item[{\bf (ii)}] The left-sided compactified system \eqref{eq:comp+} is $C^{k}$-smooth
on the extended domain $U\times \bar{\mathcal{S}}_{-}$ if the limits
\begin{eqnarray*}
&& \lim_{s\to -1^{+}}\frac{1}{{\rm det}(\mathcal{M}_{n}h^{-}(s))}\sum_{i=1}^{n}{\rm det} (\mathcal{M}_{n,i+1}h^{-}(s)) \cdot S_{n}^{i}h^{-}(s)
=\lim_{t\to -\infty}\frac{{\rm det}(\mathcal{Q}_{n,n}\phi(t))}{{\rm det} (\mathcal{Q}_{n}\phi(t))},\\
&& \lim_{t\to -\infty}
\sum_{l=1}^{n}\mu^{(l)}(t)\cdot \sum_{p(n,l)} n!\prod_{i=1}^{n}\frac{(\varphi_{n,i}(t))^{\lambda_{i}}}{(\lambda_{i}!)(i!)^{\lambda_{i}}},
\end{eqnarray*}
exist for all $n=1,...,k$.
\end{enumerate}
\end{theorem}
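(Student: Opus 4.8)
The plan is to adapt the proof of Theorem \ref{thm-transf-cond} to the one-sided setting, obtaining one-sided analogues of Lemmas \ref{lm:G-smooth} and \ref{lm:F-smooth} in which the single boundary point $s=1$ (resp.\ $s=-1$) and the single limit $t\to+\infty$ (resp.\ $t\to-\infty$) replace the bilateral pairs $s\to\pm 1^{\mp}$ and $t\to\pm\infty$. I would prove only statement \textbf{(i)}, since \textbf{(ii)} follows by the symmetric substitution of $s\to -1^{+}$, $t\to-\infty$, $h^{-}$, $\mathcal{S}_{-}$ and $\mathcal{I}_{-}$ for $s\to 1^{-}$, $t\to+\infty$, $h^{+}$, $\mathcal{S}_{+}$ and $\mathcal{I}_{+}$.

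First I would settle smoothness on the interior and continuity at the boundary. By assumption \textbf{(H2)} the map $\phi$ is $C^{k+1}$-smooth and strictly increasing on $\mathcal{I}_{+}=[t_{+},+\infty)$, so the implicit function theorem gives that $h^{+}=\phi^{-1}$ is $C^{k+1}$-smooth on $\mathcal{S}_{+}=[s_{+},1)$; combined with the $C^{k}$-smoothness of $f$, this shows, exactly as in the two-sided case, that $F^{+}(x,s)$ and $G^{+}(s)$ are $C^{k}$-smooth on $U\times\mathcal{S}_{+}$, so only the endpoint $s=1$ needs attention. Moreover \textbf{(H2)} yields $\lim_{s\to 1^{-}}G^{+}(s)=\lim_{t\to+\infty}\dot\phi(t)=0=G^{+}(1)$ and $\lim_{s\to 1^{-}}F^{+}(x,s)=f(x,\mu^{+})=F^{+}(x,1)$, so both functions are continuous at $s=1$. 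As in Lemmas \ref{lm:G-smooth} and \ref{lm:F-smooth}, a function that is $C^{k}$ on the half-open interval $\mathcal{S}_{+}$ and whose derivatives up to order $k$ possess one-sided limits as $s\to 1^{-}$ extends to a $C^{k}$ function on $\bar{\mathcal{S}}_{+}$. Hence it suffices to show that the one-sided limits of $(G^{+})^{(n)}(s)$ and of each $\frac{\partial^{j+n}}{\partial x^{j}\partial s^{n}}F^{+}(x,s)$ with $1\le j+n\le k$ exist as $s\to 1^{-}$.

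The decisive observation is that the explicit formulas produced in the proofs of Lemmas \ref{lm:G-smooth} and \ref{lm:F-smooth} are purely local identities: formulas \eqref{formula-gn-1} and \eqref{formula-gn-2} for $g^{(n)}$, and \eqref{eq:partial-d-F} together with \eqref{eq:mu-n} for the partial derivatives of $F$, are obtained solely from the Fa\`a di Bruno formula and from Cramer's rule applied to the matrices $\mathcal{M}_{n}h$, $\mathcal{Q}_{n}\phi$ and $\tilde{\mathcal{Q}}_{n,i}\phi$. These derivations use only the values of $\phi$ and $h$ on the relevant interval, so they hold verbatim on $\mathcal{S}_{+}$ with $h^{+}$ in place of $h$. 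Since $\mu(h^{+}(s))\to\mu^{+}$ as $s\to 1^{-}$, all the coefficients $\frac{\partial^{j+l}}{\partial x^{j}\partial\mu^{l}}f(x,\mu(h^{+}(s)))$ remain continuous up to $s=1$; consequently the required one-sided limits of the derivatives of $G^{+}$ and $F^{+}$ exist as soon as the two prescribed limits, namely the one-sided versions of \eqref{limt-G-cond} and \eqref{limt:F-trans-cond}, exist. This establishes the $C^{k}$ extension to $U\times\bar{\mathcal{S}}_{+}$ and hence \textbf{(i)}.

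I expect the main obstacle to be the only genuinely non-formal step, namely the one-sided extension principle invoked above: the implication that a $C^{k}$ function on $\mathcal{S}_{+}$ whose derivatives admit one-sided limits at the endpoint actually extends to a $C^{k}$ function on $\bar{\mathcal{S}}_{+}$ with matching one-sided derivatives. This is a standard real-analysis fact obtained by iterating the mean value theorem, so that the limit of $(G^{+})^{(n)}$ equals the one-sided derivative of $(G^{+})^{(n-1)}$, and it is already implicit in the two-sided proofs; the remaining effort is purely the bookkeeping needed to confirm that the determinant identities and the Fa\`a di Bruno expansions transfer unchanged from the bilateral interval $(-1,1)$ to the unilateral interval $\mathcal{S}_{+}$.
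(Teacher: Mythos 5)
Your proposal is correct and follows essentially the same route as the paper: the paper itself proves Theorem \ref{thm-transf-cond-pm} simply by remarking that the argument of Theorem \ref{thm-transf-cond} (i.e., the proofs of Lemmas \ref{lm:G-smooth} and \ref{lm:F-smooth}) carries over with the bilateral limits $s\to\pm 1^{\mp}$, $t\to\pm\infty$ replaced by the single one-sided limits, exactly as you spell out. Your explicit identification of the one-sided $C^{k}$ extension principle as the only non-formal ingredient is a fair articulation of what the paper leaves implicit.
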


In the end of this section,
we remark that it is also able to  give the criteria for one-sided compactifications of the nonautonomous system \eqref{eq:NDS}
by the similar way in Proposition \ref{prop-criterion}.

\section{Invariant manifolds for compactified systems}
\label{sec:IM}

We first introduce some notations and definitions for this section.
The Jacobian matrix of a $C^{1}$ function  $h$ is denoted by  $Dh$,
$\|\cdot\|$ denotes the norm of an element in a Banach space
or the norm of a bounded linear operator,
$|\cdot|$ denotes the norm of a vector in an Euclidean space,
and $^{*}$ denotes the transpose of a vector.

We focus on the nonautonomous system \eqref{eq:NDS} with a bi-asymptotically constant term $\mu$.
When $\mu$ is asymptotically constant, i.e., it has a future or past limit,
the similar argument can be given by a simple modification.
Throughout this section, we also assume the condition as follows:
\begin{enumerate}
\item[{\bf (H3)}] There exists a positive integer $k\geq 2$ and a transformation $\phi$ with the properties in {\bf (H1)} such that
the limits \eqref{limt-G-cond} and \eqref{limt:F-trans-cond} exist,
and the following limit holds:
$$\lim_{t\to \pm \infty} \frac{\dot{\phi}(t)}{\phi(t)}=0.$$
\end{enumerate}
When assumption {\bf (H3)} holds,
by Theorem \ref{thm-transf-cond} we obtain that the compactified system \eqref{eq:comp} is $C^{k}$-smooth.
It is clear that the limit in {\bf (H3)} holds when $\phi$ decays slower than exponentially.
See also the example in section \ref{sec:example}.
If this limit holds, then we will see that each invariant set of the compactified system \eqref{eq:comp}
gains an additional center direction.
This leads to some trouble in analysis due to the nonuniqueness of center manifolds.
We are about to deal with it in this section.

Now we recall some topological notions for flows.
Let $\rho$ denote a metric in an Euclidean space.
We use $\Psi_{t}$ ($t\in\mathbb{R}$) to denote the flow generated by a vector field.
The $\omega$-limit set $\omega(\Lambda)$ and the $\alpha$-limit set $\alpha(\Lambda)$ of a set $\Lambda$ are defined by
\begin{eqnarray*}
\omega(\Lambda)):=\bigcap_{T>0}{\rm cl}\{\Psi_{t}(\Lambda): t>T\},
\ \ \ \ \
\alpha(\Lambda)):=\bigcap_{T<0}{\rm cl}\{\Psi_{t}(\Lambda): t<T\},
\end{eqnarray*}
where ${\rm cl}(\cdot)$ denote the closure operation and
$\Psi_{t}(\Lambda)$ denotes the flow evolution for time $t$ of the set $\Lambda$.
We call $\Lambda$ an invariant set of the flow $\Psi_{t}$ if $\Psi_{t}(\Lambda)=\Lambda$ for each $t\in\mathbb{R}$.

For a compact invariant set $\Lambda$,
we define its stable set $w^{\rm s}(\Lambda)$ and unstable set $w^{\rm u}(\Lambda)$, respectively, as
\begin{eqnarray*}
w^{\rm s}(\Lambda):=\{p: \omega(p)\subset \Lambda\},
\ \ \ \ \
w^{\rm u}(\Lambda):=\{p: \alpha(p)\subset \Lambda\},
\end{eqnarray*}
and it stable manifold $W^{\rm s}(\Lambda)$ and unstable manifold $W^{\rm u}(\Lambda)$, respectively, as
\begin{eqnarray*}
W^{\rm s}(\Lambda)
\!\!\!&:=&\!\!\!
\{p\in w^{\rm s}(\Lambda):\, \rho(\Psi_{t}(p),\Lambda)\leq Ke^{\lambda t} \ \ \ \mbox{for some}\ K>0,\,\lambda<0\ \mbox{ and } t>0 \},\\
W^{\rm u}(\Lambda)
\!\!\!&:=&\!\!\!
\{p\in w^{\rm u}(\Lambda):\, \rho(\Psi_{t}(p),\Lambda)\leq Ke^{\lambda t} \ \ \  \mbox{for some}\ K>0,\,\lambda>0\ \mbox{ and } t<0 \},
\end{eqnarray*}
where
\[
\rho(\Psi_{t}(p),\Lambda):=\inf_{\zeta\in\Lambda}\rho(\Psi_{t}(p),\zeta).
\]
The local versions of the stable/unstable sets and  manifolds
for the compact invariant set $\Lambda$ are defined as follows.
For an open set $V$ of the compact invariant set $\Lambda$,
the local stable/unstable sets $w^{\rm s,u}_{\rm loc}(\Lambda)$
and the local stable/unstable manifolds $W^{\rm s,u}_{\rm loc}(\Lambda)$ are given by
\begin{eqnarray*}
w^{\rm s,u}_{\rm loc}(\Lambda)
\!\!\!&:=&\!\!\!
\{p\in w^{s,u}(\Lambda):\, \Psi_{t}(p)\in V \ \mbox{ for all }\pm t>0\},\\
W^{\rm s,u}_{\rm loc}(\Lambda)
\!\!\!&:=&\!\!\!
\{p\in W^{s,u}(\Lambda):\, \Psi_{t}(p)\in V \ \mbox{ for all }\pm t>0\}.
\end{eqnarray*}

Invariant manifolds are useful in grasping the dynamics of the nonautonomous system \eqref{eq:NDS}.
Our goal is to investigate the invariant manifolds for compact invariant sets of the compactified system \eqref{eq:comp}.
According to the types of compact invariant sets,
the whole discussion is divided into three different cases.
The strategy of the following is as follows:

\begin{enumerate}
\item[$\bullet$]
Introduce the results on the invariant manifolds of equilibria,
which were proved by Wieczorek Xie and Jones \cite{Wieczorek-Xie-Jone-21} recently.

\item[$\bullet$]
Prove the results on the invariant manifolds for compact invariant sets,
which are normally hyperbolic invariant manifolds.

\item[$\bullet$]
Prove the results on the invariant manifolds for a class of compact invariant sets,
which are called admissible sets (see the  definition in subsection \ref{sec:Adm}).
\end{enumerate}
Actually, our method can be used to study more general compact invariant sets.

\subsection{Invariant manifolds of equilibria}
\label{sec:equiliria}

The simplest example of compact invariant sets for a flow is an equilibrium.
Let $\eta$ be an equilibrium of a vector field.
Suppose that the eigenvalues of the linearized system at $\eta$
consist of zero real-part, negative real-part and positive real-part eigenvalues,
whose related eigenspaces are denoted by  $E^{\rm c}, E^{\rm s}$ and $E^{\rm u}$, respectively.
A local center manifold $W_{\rm loc}^{\rm c}(\eta)$ of $\eta$ is given by the graph ${\rm gr}(h^{\rm c})$
of a function $h^{\rm c}: E^{\rm c}\cap V\to E^{\rm s}\oplus E^{\rm u}$,
which is flow-invariant relative to some neighborhood $V$ of $\eta$ and tangent to $E^{\rm c}$ at $\eta$.
A local center-stable manifold $W_{\rm loc}^{\rm cs}(\eta)$ of $\eta$ is given by the graph ${\rm gr}(h^{\rm cs})$
of a function $h^{\rm cs}: (E^{\rm c}\oplus E^{\rm s})\cap V\to E^{\rm u}$,
which is flow-invariant relative to some neighborhood $V$ of $\eta$ and tangent to $E^{\rm c}\oplus E^{\rm s}$ at $\eta$.

In particular,
suppose $\eta^{+}\in \{s=1\}$ and $\eta^{-}\in\{s=-1\}$ are equilibria of
the future limit system \eqref{eq:NDS-future} and the past limit system \eqref{eq:NDS-past}, respectively.
Then $\tilde{\eta}^{\pm}:=(\eta^{\pm},\pm1)$ become the equilibria of  the compactified system \eqref{eq:comp}.
By \eqref{cond-C-1-1}, {\bf (H1)} and {\bf (H3)}, the function $G$ in  \eqref{eq:comp} satisfies
\begin{eqnarray*}
G'(\pm 1)=\lim_{s\to \pm 1^{\mp}}(-\frac{h''(s)}{(h'(s))^{2}})
=\lim_{t\to \pm \infty} \frac{\ddot{\phi}(t)}{\dot{\phi}(t)}
=\lim_{t\to \pm \infty} \frac{\dot{\phi}(t)}{\phi(t)}=0,
\end{eqnarray*}
Note that  the Jacobian matrices of the compactified system \eqref{eq:comp} at $\tilde{\eta}^{\pm}$ are given by
\begin{eqnarray*}
\left(
\begin{array}{cc}
\frac{\partial f}{\partial x}(\eta^{\pm}, \mu^{\pm}) & \frac{\partial F}{\partial s}(\eta^{\pm},\pm 1)\\
0 & G'(\pm 1)
\end{array}
\right).
\end{eqnarray*}
Compared with the equilibria $\eta^{\pm}$ of the limit systems,
each $\tilde{\eta}^{\pm}$ of the compactified system \eqref{eq:comp} gains one center direction.
In general, the center  manifolds may be not unique \cite{Carr-81,Sijbran-85}.
Concerning the occurrence and uniqueness of the center and center-stable manifolds in the compactified system \eqref{eq:comp},
Wieczorek, Xie and Jones \cite{Wieczorek-Xie-Jone-21} recently proved the following statements.

\begin{theorem}\label{thm-equilbria-1}\cite[Theorem 3.2]{Wieczorek-Xie-Jone-21}
Suppose that assumption {\bf (H3)} holds and
$\eta^{+}$ is a hyperbolic saddle equilibrium of the future limit system \eqref{eq:NDS-future}.
Then $\tilde{\eta}^{+}=(\eta^{+},1)$ is a non-hyperbolic saddle of the compactified system \eqref{eq:comp},
its local center-stable manifold $W^{\rm cs}_{\rm loc}(\tilde{\eta}^{+})$
is unique, relatively to some neighborhood $\mathcal{V}$  of $\tilde{\eta}^{+}$,
and  $W^{\rm cs}_{\rm loc}(\tilde{\eta}^{+})=w_{\rm loc}^{\rm s}(\tilde{\eta}^{+})$
in the extended phase space of the compactified system \eqref{eq:comp}.
\end{theorem}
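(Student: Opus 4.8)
The plan is to first read off the linearization at $\tilde\eta^+$, then invoke the center-stable manifold theorem for existence, and finally exploit the decoupled, one-directional nature of the $s$-dynamics to force the identification $W^{\rm cs}_{\rm loc}(\tilde\eta^+)=w^{\rm s}_{\rm loc}(\tilde\eta^+)$, from which uniqueness is automatic. For the linear part I would use that the excerpt already records $G'(1)=0$ and that the Jacobian of \eqref{eq:comp} at $\tilde\eta^+$ is block upper-triangular with diagonal blocks $\partial f/\partial x(\eta^+,\mu^+)$ and $G'(1)=0$; hence the spectrum of the linearization is that of $\partial f/\partial x(\eta^+,\mu^+)$ together with the simple eigenvalue $0$. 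Since $\eta^+$ is a hyperbolic saddle of \eqref{eq:NDS-future}, $\partial f/\partial x(\eta^+,\mu^+)$ has no eigenvalue on the imaginary axis and has eigenvalues of both signs of real part, so $\tilde\eta^+$ has a nonempty unstable eigenspace $E^{\rm u}$, a nonempty stable eigenspace $E^{\rm s}$, and a one-dimensional center eigenspace $E^{\rm c}$ spanned by the $s$-direction; this is exactly a non-hyperbolic saddle. Existence of a $C^{k}$ local center-stable manifold $W^{\rm cs}_{\rm loc}(\tilde\eta^+)$ tangent to $E^{\rm c}\oplus E^{\rm s}$ then follows from the standard center-manifold theorem applied to the $C^{k}$ compactified system \eqref{eq:comp}.

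The decisive structural feature is that the $s$-equation $\dot s=G(s)$ is decoupled and satisfies $G(s)>0$ for $-1<s<1$ with $G(1)=0$, so $s(t)$ increases strictly monotonically to $1$ along every forward orbit with $s<1$. Within the center direction, orbits can therefore only approach $\tilde\eta^+$ in forward time and never leave toward larger $s$ (the domain being $s\le 1$); this removes precisely the center-unstable freedom responsible for the usual non-uniqueness of center-stable manifolds. Following the normalization idea, I would straighten $E^{\rm u}$ and $E^{\rm s}$ in a neighborhood $\mathcal V$ of $\tilde\eta^+$, use $s$ as the center coordinate, and argue that on $W^{\rm cs}_{\rm loc}$ the directions $E^{\rm c}\oplus E^{\rm s}$ are forward non-expanding (center forward-attracting, stable forward-contracting), so every orbit on $W^{\rm cs}_{\rm loc}$ stays forward in $\mathcal V$. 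Conversely, any forward-trapped orbit carries no unstable growth and so lies on $W^{\rm cs}_{\rm loc}$. This yields the intrinsic characterization
\[
W^{\rm cs}_{\rm loc}(\tilde\eta^+)=\{p\in\mathcal V:\ \Psi_t(p)\in\mathcal V\ \text{for all}\ t\ge 0\},
\]
which is choice-free; uniqueness of $W^{\rm cs}_{\rm loc}(\tilde\eta^+)$ will then follow once this set is shown to coincide with the local stable set.

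To finish, I would prove $W^{\rm cs}_{\rm loc}(\tilde\eta^+)=w^{\rm s}_{\rm loc}(\tilde\eta^+)$ by two inclusions. The inclusion $w^{\rm s}_{\rm loc}\subseteq W^{\rm cs}_{\rm loc}$ is immediate, since an orbit converging to $\tilde\eta^+$ stays forward in $\mathcal V$ and carries no unstable growth. For the reverse inclusion, take $p\in W^{\rm cs}_{\rm loc}$, so its forward orbit stays in $\mathcal V$ and has $s(t)\to 1$; on the invariant slice $\{s=1\}$ the flow is the future limit system \eqref{eq:NDS-future} with hyperbolic saddle $\eta^+$, and the $C^{k}$ extension guarantees that $F(x,s)\to f(x,\mu^+)$ together with its derivatives as $s\to 1$. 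Reading the $x$-equation as an asymptotically autonomous perturbation of $\dot x=f(x,\mu^+)$ with $s(t)\to 1$, the hyperbolic saddle structure forces the forward orbit to converge to $\eta^+$ rather than merely remain bounded, so $\omega(p)=\{\tilde\eta^+\}$ and $p\in w^{\rm s}_{\rm loc}$.

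I expect this last step to be the main obstacle: upgrading ``stays forward in $\mathcal V$'' to ``converges to $\tilde\eta^+$'' requires controlling the asymptotically autonomous $x$-dynamics as $s\to 1$, combining the saddle hyperbolicity of the limit system with the $C^{k}$ closeness of $F$ to $f(\cdot,\mu^+)$ to exclude any nontrivial recurrent or bounded-but-non-convergent behavior in the $x$-slices. Once convergence is established, the intrinsic identification with $w^{\rm s}_{\rm loc}(\tilde\eta^+)$ delivers both the uniqueness relative to $\mathcal V$ and the final equality claimed in the statement.
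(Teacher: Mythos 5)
Your proposal is correct in outline, but note a point of bookkeeping first: this paper never proves Theorem \ref{thm-equilbria-1} itself --- the statement is imported verbatim from \cite[Theorem 3.2]{Wieczorek-Xie-Jone-21}, and the text only remarks that the cited proof ``is based on a good local structure near an equilibrium.'' What you sketch is essentially that cited argument: read off the block-triangular Jacobian with $G'(1)=0$ (which the paper records in subsection \ref{sec:equiliria}), normalize coordinates near the equilibrium, and use the decoupled monotone equation $\dot s=G(s)>0$ on $(-1,1)$ to identify every local center-stable manifold with the forward-trapped set, hence with $w^{\rm s}_{\rm loc}(\tilde\eta^{+})$. The paper's own machinery, developed for the generalizations in Theorems \ref{thm-NHIM-1} and \ref{thm-admset-1}, is genuinely different: existence via Fenichel's overflowing-invariant-manifold theorem (Lemmas \ref{thm-Fenichel-1} and \ref{prop-NHIM-1}) and uniqueness via coordinate-free projection estimates in a tubular neighborhood (Lemmas \ref{lm:est-1}--\ref{lm:est-3}), culminating in the expansion $|\Pi_{\rm u}(\xi_{k})|\geq \theta^{k}|\Pi_{\rm u}(\xi)|$ that expels any trapped point with a nonzero unstable component. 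Your route is more elementary and yields the intrinsic, choice-free characterization of $W^{\rm cs}_{\rm loc}$ directly, but it is tied to coordinates adapted to an equilibrium; the paper's route is heavier but survives when the invariant set has no such local structure, which is exactly its motivation.

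Three places in your sketch need tightening, though none is fatal. (i) The center eigenspace is spanned by $(-A^{-1}b,1)$ with $A=\partial f/\partial x(\eta^{+},\mu^{+})$ and $b=\partial F/\partial s(\eta^{+},1)$, not literally by the $s$-axis; a linear shear fixes this, and since $s$ is a globally defined decoupled coordinate your later use of $s$ as the center coordinate is legitimate after that shear. Also, $\tilde\eta^{+}$ lies on the boundary $s=1$ of the phase space, so the center-stable manifold theorem must be applied to a $C^{k}$ extension of \eqref{eq:comp} past $s=1$ and then restricted. (ii) The claim that every orbit on an \emph{arbitrary} local center-stable manifold remains in $\mathcal V$ forward in time is where the one-sidedness truly enters: on such a manifold the restricted flow has spectrum consisting of the stable eigenvalues together with $0$, and its one-dimensional center dynamics is $\dot s=G(s)$ with $s\uparrow 1$, so a one-sided reduction-principle argument gives asymptotic stability of $\tilde\eta^{+}$ within the manifold. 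This step must be made explicit, because with stable transverse directions in place of unstable ones the analogous uniqueness fails (which is precisely why Theorem \ref{thm-equilbria-2} concerns a sink and its center manifold, via time reversal, rather than a saddle). (iii) Your acknowledged ``main obstacle'' --- upgrading forward-trapped to convergent --- does not require asymptotically autonomous perturbation theory: since $s(t)\uparrow 1$ monotonically, $\omega(p)$ is a nonempty invariant set contained in $\{s=1\}\cap\bar{\mathcal V}$ and in the trapped set of the limit system \eqref{eq:NDS-future} in that slice, i.e.\ in $W^{\rm s}_{\rm loc}(\eta^{+})$; the only complete orbit of the hyperbolic limit system that stays near $\eta^{+}$ inside its local stable manifold is $\eta^{+}$ itself, so $\omega(p)=\{\tilde\eta^{+}\}$.
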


The proof of this theorem is based on a good local structure near an equilibrium.
Consider the compactified system \eqref{eq:comp} with time reversed.
Then one can also obtain the next theorem by applying Theorem \ref{thm-equilbria-1}.

\begin{theorem}\label{thm-equilbria-2}\cite[Theorem 3.4]{Wieczorek-Xie-Jone-21}
Suppose that assumption {\bf (H3)} holds and
$\eta^{-}$ is a hyperbolic sink of the past limit system \eqref{eq:NDS-past}.
Then $\tilde{\eta}^{-}=(\eta^{-},-1)$ is a non-hyperbolic saddle of the compactified system \eqref{eq:comp},
its local center manifold $W^{\rm c}_{\rm loc}(\tilde{\eta}^{-})$
is unique, relatively to some neighborhood $\mathcal{V}$  of $\tilde{\eta}^{-}$,
and  $W^{\rm c}_{\rm loc}(\tilde{\eta}^{-})=w_{\rm loc}^{\rm u}(\tilde{\eta}^{-})$
in the extended phase space of the compactified system \eqref{eq:comp}.
\end{theorem}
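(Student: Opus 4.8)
The plan is to deduce the theorem from Theorem~\ref{thm-equilbria-1} by composing a time reversal with the reflection $s\mapsto -s$, which carries the past-limit sink sitting at $s=-1$ to a \emph{future}-limit equilibrium sitting at $s=+1$, where Theorem~\ref{thm-equilbria-1} is directly applicable. Concretely, I would set $\tau=-t$ and $\sigma=-s$, under which the compactified system \eqref{eq:comp} becomes
\[
\frac{dy}{d\tau}=\hat F(y,\sigma):=-F(y,-\sigma),
\qquad
\frac{d\sigma}{d\tau}=\hat G(\sigma):=G(-\sigma),
\qquad (y,\sigma)\in U\times[-1,1].
\]
I would first observe that this hatted system is itself of the form \eqref{eq:comp}: it is precisely the compactification of the time-reversed nonautonomous equation $dy/d\tau=-f(y,\mu(-\tau))$ through the transformation $\hat\phi(\tau):=-\phi(-\tau)$, whose asymptotic limits are $\hat\mu^{+}=\mu^{-}$ and $\hat\mu^{-}=\mu^{+}$. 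One checks that $\hat\phi$ again satisfies {\bf (H1)}, and that the smoothness limits \eqref{limt-G-cond} and \eqref{limt:F-trans-cond} persist, since the reflection and reversal only insert factors $\pm1$ into the Fa\`a di Bruno expansions; hence {\bf (H3)} holds for the hatted system as well.

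Next I would identify the equilibrium and its local structure. The point $\tilde\eta^{-}=(\eta^{-},-1)$ is carried to $(\eta^{-},+1)$, an equilibrium of the hatted system lying on its future-limit subspace $\{\sigma=1\}$. Since $\eta^{-}$ is a hyperbolic sink of \eqref{eq:NDS-past}, it is a hyperbolic \emph{source} of the reversed future-limit system $dy/d\tau=-f(y,\mu^{-})$, i.e.\ a hyperbolic equilibrium whose stable subspace in the $x$-directions is trivial. Exactly as in the computation preceding Theorem~\ref{thm-equilbria-1}, one has $\hat G(1)=0$ with $\hat G(\sigma)>0$ for $\sigma<1$, so $(\eta^{-},+1)$ acquires a single \emph{attracting} center direction and is a non-hyperbolic saddle of the hatted system with the same qualitative structure as $\tilde\eta^{+}$ there. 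Applying Theorem~\ref{thm-equilbria-1} to the hatted system then produces a unique local center-stable manifold of $(\eta^{-},+1)$ that coincides with its local stable set $w^{\rm s}_{\rm loc}$.

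Finally I would transport this conclusion back by the diffeomorphism $\Theta:(x,s)\mapsto(x,-s)$, which conjugates the two flows while reversing the direction of time. Because the stable $x$-subspace is trivial, the center-stable manifold of the hatted system is tangent to the center subspace alone and is therefore its center manifold; under $\Theta$ this becomes a center manifold of $\tilde\eta^{-}$ for \eqref{eq:comp}. Since time reversal interchanges stable and unstable sets, the local stable set of $(\eta^{-},+1)$ corresponds to $w^{\rm u}_{\rm loc}(\tilde\eta^{-})$, and uniqueness is inherited because $\Theta$ is a diffeomorphism. Together these give $W^{\rm c}_{\rm loc}(\tilde\eta^{-})=w^{\rm u}_{\rm loc}(\tilde\eta^{-})$ with the asserted uniqueness.

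The verification that {\bf (H3)} survives the reflection and reversal is routine. The step I expect to be the real obstacle is the careful matching of manifold types through $\Theta$ and, in particular, confirming that Theorem~\ref{thm-equilbria-1} still applies when the hyperbolic equilibrium of the future-limit system is a source rather than a genuine saddle. One must check that the normalization near the equilibrium underlying Theorem~\ref{thm-equilbria-1} does not tacitly rely on a nontrivial stable $x$-direction, so that the degenerate case of an empty stable $x$-subspace---in which center-stable collapses to center, and $w^{\rm s}_{\rm loc}$ of the source-plus-attracting-center equilibrium reduces to its center manifold---is genuinely covered. Keeping the bookkeeping of center versus center-stable correct under the orientation-reversing map is where an error would most easily arise.
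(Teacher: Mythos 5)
Your proposal is correct and follows essentially the same route as the paper: the paper's entire justification of this (cited) theorem is the remark that one considers the compactified system \eqref{eq:comp} with time reversed and then applies Theorem \ref{thm-equilbria-1}, which is exactly your reduction, made explicit via the reflection $s\mapsto -s$ and the conjugacy checks for {\bf (H1)}/{\bf (H3)}. The saddle-versus-source caveat you flag is real but is equally implicit in the paper's one-line derivation (the statement is anyway quoted from \cite{Wieczorek-Xie-Jone-21}, where the sink case is handled as Theorem 3.4); your fleshed-out version is, if anything, more careful than the paper's.
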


Besides equilibria, dynamical systems generated by differential equations
can possess more complicated compact invariant sets,
such as periodic orbits, tori and strange attractors
\cite{Arnold-Afrajmovich-Ilyashenko-13,Guck-Holmes-83,Robinson-99}.
Inspired by Theorems \ref{thm-equilbria-1} and \ref{thm-equilbria-2},
Wieczorek, Xie and Jones  \cite{Wieczorek-Xie-Jone-21} proposed a conjecture as follows:
\begin{conjecture}\label{conj:WXJ}
The statements for equilibria in Theorems \ref{thm-equilbria-1} and \ref{thm-equilbria-2} also hold
for general normally hyperbolic compact invariant sets of the future and past limit systems.
\end{conjecture}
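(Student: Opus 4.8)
The plan is to prove the future-limit statement (generalizing Theorem~\ref{thm-equilbria-1}) in detail and to recover the past-limit statement (generalizing Theorem~\ref{thm-equilbria-2}) by reversing time in the compactified system \eqref{eq:comp}, exactly as in \cite{Wieczorek-Xie-Jone-21}. Let $\mathcal{M}^{+}$ be a compact normally hyperbolic invariant manifold of the future limit system \eqref{eq:NDS-future}, and set $M^{+}=\mathcal{M}^{+}\times\{1\}$. Since $G(1)=0$ and, by the computation in subsection~\ref{sec:equiliria} from \eqref{cond-C-1-1}, {\bf (H1)} and {\bf (H3)}, also $G'(1)=0$, the hyperplane $\{s=1\}$ is invariant for \eqref{eq:comp} and the linearization normal to $M^{+}$ splits into the original stable and unstable normal bundles $E^{\rm s}\oplus E^{\rm u}$ of $\mathcal{M}^{+}$ together with one neutral direction coming from $s$. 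First I would verify that $M^{+}$ is itself a normally hyperbolic invariant manifold of the $C^{k}$ compactified system \eqref{eq:comp}, with center bundle $E^{\rm c}=TM^{+}\oplus\mathrm{span}(\partial_{s})$; this amounts to checking that the uniform exponential rates on $E^{\rm s}$ and $E^{\rm u}$ dominate the subexponential rates along $\mathcal{M}^{+}$ and in the newly added neutral $s$-direction, which follows from the normal hyperbolicity of $\mathcal{M}^{+}$ for \eqref{eq:NDS-future} together with $G'(1)=0$.

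For existence, with normal hyperbolicity in hand, I would invoke the invariant manifold theory of Fenichel \cite{Fenichel-71,Fenichel-79} (see also \cite{Bates-Peter-Lu-98,Wiggins-94}) to produce a locally invariant $C^{k}$ center-stable manifold $W^{\rm cs}_{\rm loc}(M^{+})$ tangent to $E^{\rm c}\oplus E^{\rm s}$ along $M^{+}$ in a neighborhood $\mathcal{V}$. This is the step that replaces the explicit equilibrium normalization used in \cite{Wieczorek-Xie-Jone-21}, and it is precisely where the manifold hypothesis, namely the existence of a tubular neighborhood, enters.

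For uniqueness, the heart of the argument is to establish $W^{\rm cs}_{\rm loc}(M^{+})=w^{\rm s}_{\rm loc}(M^{+})$, since the stable set $w^{\rm s}_{\rm loc}(M^{+})$ is defined purely dynamically and is therefore independent of any choice of manifold; once this identity holds, any two local center-stable manifolds both equal $w^{\rm s}_{\rm loc}(M^{+})$ and hence coincide. I would fix a tubular neighborhood of $M^{+}$ adapted to the splitting $E^{\rm s}\oplus E^{\rm u}\oplus\mathrm{span}(\partial_{s})$ and write the flow in normal coordinates $(u,v,s)$, so that along orbits $\dot s=G(s)>0$ forces $s(t)$ to increase monotonically to $1$ and, by {\bf (H3)}, to do so slower than exponentially. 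For the inclusion $W^{\rm cs}_{\rm loc}\subseteq w^{\rm s}_{\rm loc}$, a point $p\in W^{\rm cs}_{\rm loc}$ has vanishing unstable coordinate and stays in $\mathcal{V}$ for all $t>0$; since $s(t)\to 1$ the $x$-equation converges to \eqref{eq:NDS-future}, for which $\mathcal{M}^{+}$ attracts along $E^{\rm s}$, and a Gronwall-type estimate pitting the slow $s$-drift against the exponential contraction on $E^{\rm s}$ gives $\rho(\Psi_{t}(p),M^{+})\to 0$, so $p\in w^{\rm s}_{\rm loc}(M^{+})$. For the reverse inclusion, a point $p\in w^{\rm s}_{\rm loc}(M^{+})$ has forward orbit remaining in $\mathcal{V}$; because $E^{\rm u}$ expands uniformly, such a non-escaping orbit must have trivial unstable coordinate, which places $p\in W^{\rm cs}_{\rm loc}(M^{+})$.

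The hard part will be the uniqueness estimates, precisely because $M^{+}$ admits no normalizing coordinates. The delicate point is to quantify, along each orbit in the tubular neighborhood, the competition between the monotone but subexponential approach $s(t)\to 1$, which only asymptotically restores the limit system, and the uniform exponential rates on $E^{\rm s}$ and $E^{\rm u}$, so as to exclude any orbit that shadows $M^{+}$ in the center-stable directions yet fails to converge to it, or that escapes along $E^{\rm u}$. I expect this to need explicit bounds of the form $\rho(\Psi_{t}(p),M^{+})\le K e^{\lambda t}+\text{(drift term)}$ in the normal coordinates, combined with the normal hyperbolicity rates and the limit in {\bf (H3)}; read in reverse time, the same estimates yield the center-manifold statement of Theorem~\ref{thm-equilbria-2} for attracting normally hyperbolic invariant manifolds of the past limit system.
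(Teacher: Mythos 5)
Your overall route coincides with the paper's: establish the invariant splitting \eqref{eq:direct} with one added neutral $s$-direction, get existence of $W^{\rm cs}_{\rm loc}$ from Fenichel's theory, prove uniqueness by identifying $W^{\rm cs}_{\rm loc}$ with the dynamically defined stable set $w^{\rm s}_{\rm loc}$ via expansion estimates along orbits in a tubular neighborhood (this is exactly what Lemmas \ref{lm:est-1}--\ref{lm:est-3} and the iteration in the proof of Theorem \ref{thm-NHIM-1} do), and recover the past-limit statement by time reversal under the normally stable (attracting) restriction, as in Theorem \ref{thm-NHIM-2}. However, your existence step contains a genuine gap. You claim that $M^{+}$ is ``itself a normally hyperbolic invariant manifold of the compactified system, with center bundle $E^{\rm c}=TM^{+}\oplus\mathrm{span}(\partial_{s})$,'' and that Fenichel's theory then directly yields a $C^{k}$ center-stable manifold tangent to $E^{\rm c}\oplus E^{\rm s}$. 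This is not correct as stated: the new $s$-direction is a \emph{normal} neutral direction, not a tangent one, so $M^{+}$ is normally \emph{non}-hyperbolic for \eqref{eq:comp} (as Theorem \ref{thm-NHIM-1} explicitly records), and Fenichel's theorems require the center bundle to be the tangent bundle of an invariant manifold. An invariant manifold tangent to your enlarged $E^{\rm c}$ is precisely the object whose existence is in question, so the invocation is circular. A second obstruction you do not address is that $M^{+}$ lies on the boundary $\{s=1\}$ of the extended phase space $U\times[-1,1]$, where no two-sided flow exists in the $s$-direction.

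The paper's Lemma \ref{prop-NHIM-1} is built to overcome exactly these two points: it reflects the compactified system across $s=1$ (the auxiliary system \eqref{eq:mod-1}, with $\tilde G(0)=\tilde G'(0)=0$) so that the invariant manifold becomes interior; it reverses time and exhibits an \emph{explicit} overflowing invariant manifold $\bar U_{2}\times\{|s|\le\epsilon\}$ for the decoupled frozen system \eqref{eq:mod-3}, using isolating-block theory to shape $U_{2}$ inside $W^{\rm s}_{\rm loc}(\Lambda^{+})$; it verifies the generalized Lyapunov-type numbers $\alpha(p)<1$ and $\sigma(p)<1$ (the latter relying on $\tilde G'(0)=0$, i.e., on assumption {\bf (H3)}); and only then does it apply Fenichel's persistence theorem for overflowing invariant manifolds (Lemma \ref{thm-Fenichel-1}) to the actual system \eqref{eq:mod-2}, which is $C^{1}$-close to \eqref{eq:mod-3} near $s=1$. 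Some construction of this kind (or, alternatively, center-manifold theory for invariant sets as in \cite{Chow-Liu-Yi-00}) is indispensable; without it your plan has no candidate manifold to feed into the uniqueness argument. Finally, note a scope limitation: your proposal treats only invariant sets that are manifolds with tubular neighborhoods, whereas Conjecture \ref{conj:WXJ} concerns general normally hyperbolic compact invariant sets; the paper goes further by also handling admissible sets (subsection \ref{sec:Adm}, Theorems \ref{thm-admset-1} and \ref{thm-admset-2}), which may have no tubular neighborhood, and even so it only claims a partial proof of the conjecture.
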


In the next two subsections, we shall give a positive answer to this conjecture
for some more general invariant sets.

\subsection{Invariant manifolds for submanifolds}
\label{sec:NHIM}

In this subsection,
we state the results on the invariant manifolds for an compact invariant set,
which is a normally hyperbolic invariant manifold, including periodic orbits, tori, etc.
We also refer to \cite{Bates-Peter-Lu-98,Fenichel-71,Hirsh-Pugh-Shub-77,Wiggins-94}
for more information on normally hyperbolic invariant manifolds.

Consider a $C^{r}$ ($r\geq 1$) vector field on $\mathbb{R}^{N}$.
The flow generated by this vector field is still denoted by $\Psi_{t}$ ($t\in\mathbb{R}$).
Set
\[
\Phi(t;p):=D\Psi_{t}(p),\ \ \ \ \ p\in \Lambda.
\]
A compact invariant set $\Lambda$ of $\Psi_{t}$ is said to be normally hyperbolic
if the tangent bundle $T\mathbb{R}^{N}$ of $\mathbb{R}^{N}$ restricted to $\Lambda$ can be split into three continuous subbundles:
\[
T\mathbb{R}^{N}|_{\Lambda}=\mathcal{E}^{\rm s}\oplus T\Lambda\oplus\mathcal{E}^{\rm u},
\]
where $T\Lambda$ is the tangent bundle of $\Lambda$, such that
\begin{enumerate}
\item[(i)] The splitting is invariant under the linearized flow, i.e.,
\[
\Phi(t;p)\mathcal{E}^{i}_{p}=\mathcal{E}^{i}_{\Psi_{t}(p)},\ \ i=s,u,
\ \ \ \ \
\Phi(t;p)T_{p}\Lambda=T_{\Psi_{t}(p)}\Lambda
\ \ \ \mbox{ for each } p\in \Lambda \mbox{ and } t\in\mathbb{R}.
\]

\item[(ii)] There exist positive constants $K$, $\alpha$ and  $\beta$ with $\alpha<\beta$ such that
\begin{eqnarray*}
|\Phi(t;p)v|\!\!\!&\leq&\!\!\! K e^{-\beta t}|v|\ \ \   \mbox{ for all }\ p\in\Lambda, v\in\mathcal{E}^{\rm s}_{p}\ \mbox{ and }t\geq 0,\\
|\Phi(t;p)v|\!\!\!&\leq&\!\!\! K e^{\beta t}|v|\ \ \ \ \     \mbox{ for all }\ p\in\Lambda, v\in\mathcal{E}^{\rm u}_{p}\ \mbox{ and }t\leq 0,\\
|\Phi(t;p)v|\!\!\!&\leq&\!\!\! K e^{\alpha |t|}|v|\ \ \ \      \mbox{ for all }\ p\in\Lambda, v\in T\Lambda\
   \mbox{ and }t\in\mathbb{R}.
\end{eqnarray*}
\end{enumerate}
The subbundles $\mathcal{E}^{\rm s}$, $T\Lambda$ and $\mathcal{E}^{\rm u}$ are called
the stable bundle, the center bundle and the unstable bundle, respectively.
In particular,
if the tangent bundle $T\mathbb{R}^{N}$ has no unstable bundles,
then the invariant manifold $\Lambda$ is said to be normally stable.

Suppose that two compact invariant sets $\Lambda^{+}\subset \{s=1\}$ and $\Lambda^{-}\subset \{s=-1\}$
are the normally hyperbolic invariant manifolds of
the future limit system \eqref{eq:NDS-future} and the past limit system \eqref{eq:NDS-past}, respectively.
Then $\tilde{\Lambda}^{\pm}:=(\Lambda^{\pm},\pm1)$ become two compact invariant manifolds of the compactified system \eqref{eq:comp}.
Similarly to the case of equilibria,
we have that each $\tilde{\Lambda}^{\pm}$ gains an additional center direction
along the vector $(0,0,...,0,1)\in\mathbb{R}^{N+1}$,
which is independent of $x\in \Lambda^{\pm}$ and normal to $\{s=\pm 1\}$.
It follows that the tangent bundle $T\mathbb{R}^{N+1}$ restricted to each $\tilde{\Lambda}^{\pm}$ can be split into three continuous subbundles:
\begin{eqnarray}\label{eq:direct}
T\mathbb{R}^{N+1}|_{\tilde{\Lambda}^{\pm}}=\mathcal{E}^{\rm s}_{\pm}\oplus \mathcal{E}^{\rm c}_{\pm} \oplus\mathcal{E}^{\rm u}_{\pm},
\end{eqnarray}
where each $\mathcal{E}^{\rm c}_{\pm}$ is the direct sum of  $T\Lambda^{\pm}$ and the additional center bundle.
Let $P^{\pm}_{\rm cs}(p)$ and $P^{\pm}_{\rm u}(p)$ be the projections of $\mathbb{R}^{N+1}$
on $\mathcal{E}^{\rm s}_{\pm}(p)\oplus \mathcal{E}^{\rm c}_{\pm}(p)$ along $\mathcal{E}^{\rm u}_{\pm}(p)$
and on $\mathcal{E}^{\rm u}_{\pm}(p)$ along $\mathcal{E}^{\rm s}_{\pm}(p)\oplus \mathcal{E}^{\rm c}_{\pm}(p)$,
respectively.

The local center-stable manifolds of $\tilde{\Lambda}^{+}$
and the local center manifolds of $\tilde{\Lambda}^{-}$ are defined as follows.
A local center-stable manifold $W^{\rm cs}_{\rm loc}(\tilde{\Lambda}^{+})$ of $\tilde{\Lambda}^{+}$
is a flow-invariant manifold, relative to some neighborhood $U$ of $\tilde{\Lambda}^{+}$,
and  is tangent to the bundle $\mathcal{E}^{\rm s}_{+}\oplus \mathcal{E}^{\rm c}_{+}$ along $\tilde{\Lambda}^{+}$.
A local center manifold $W^{\rm c}_{\rm loc}(\tilde{\Lambda}^{-})$ of $\tilde{\Lambda}^{-}$
is a flow-invariant manifold, relative to some neighborhood $U$ of $\tilde{\Lambda}^{-}$,
and  is tangent to the bundle $\mathcal{E}^{\rm c}_{-}$ along $\tilde{\Lambda}^{-}$.
We shall extend the results of Theorems \ref{thm-equilbria-1} and \ref{thm-equilbria-2}
to normally hyperbolic invariant manifolds.

Before that, we first make some preparations in the following.
Let $\bar{\Omega}=\Omega \cup \partial \Omega$ be a $C^{r}$ $(r\geq 1)$
compact submanifolds with boundary contained in $\mathbb{R}^{N+1}$,
where $\partial \Omega$ denotes the boundary of $\Omega$.
The submanifold $\bar{\Omega}$ is said to be overflowing invariant under a vector field $\mathcal{F}_{0}$,
whose flow is still denoted by $\Psi_{t}$ ($t\in\mathbb{R}$),
if for each $p\in \bar{\Omega}$, its backward orbit  lies in $\bar{\Omega}$,
and the vector field $\mathcal{F}_{0}$ points strictly outward on $\partial\Omega$.
One can write $T\mathbb{R}^{N+1}|_{\Omega}=T\Omega\oplus \mathcal{N}$,
where $\mathcal{N}$ is the normal bundle of $\Omega$.
Let $\Pi$ denote the projection on $\mathcal{N}$ corresponding to this direct sum.
For every $p\in \Omega$, $v_{0}\in T_{p}\Omega$ and $w_{0}\in \mathcal{N}_{p}$, let
\begin{eqnarray*}
v(t)=\Phi(t;p)v_{0},
\ \ \ \ \
w(t)=\Pi(\Phi(t;p)w_{0}),
\ \ \ \ \ t\leq 0.
\end{eqnarray*}
We define the generalized Lyapunov-type numbers of the overflowing invariant manifold $\Omega$ as follows:
\begin{eqnarray*}
\alpha(p):=\inf\left\{a>0:\,a^{t}/|w(t)|\to 0\ \mbox{ as }t\to -\infty \mbox{ for all } w_{0}\in \mathcal{N}_{p}\right\}.
\end{eqnarray*}
If $\alpha(p)<1$, then we define
\begin{eqnarray*}
\sigma(p):=\inf\left\{\sigma:\,|v(t)|/|w(t)|^{\sigma}\to 0\ \mbox{ as }t\to -\infty
\mbox{ for all } v_{0}\in T_{p}\Omega, w_{0}\in \mathcal{N}_{p}\right\}.
\end{eqnarray*}

By Fenichel's theorem on the persistence of overflowing invariant manifolds (see \cite[Theorem 1]{Fenichel-71}),
we can prove the following perturbation results for the overflowing invariant manifold $\bar{\Omega}$.

\begin{lemma}\label{thm-Fenichel-1}
Let $\bar{\Omega}=\Omega\cup\partial \Omega$ be a $C^{k}$ compact manifolds with boundary
and overflowing invariant under the vector field $\mathcal{F}_{0}$.
Suppose that
\begin{eqnarray*}
\alpha(p)<1,\ \ \ \sigma(p)<1,\ \ \ \mbox{ for all }\, p\in \Omega.
\end{eqnarray*}
Then for each $C^{k}$ vector field $\mathcal{F}_{\rm pert}$, $C^{1}$-close to $\mathcal{F}_{0}$,
there exists a  $C^{k}$ manifold  $\bar{\Omega}_{\rm pert}$ overflowing invariant under the vector field $\mathcal{F}_{\rm pert}$
and $C^{k}$ diffeomorphic to $\bar{\Omega}$.
\end{lemma}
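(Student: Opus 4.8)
The plan is to deduce the statement directly from Fenichel's persistence theorem for overflowing invariant manifolds \cite[Theorem 1]{Fenichel-71}, so that the work reduces to matching the hypotheses of the present lemma to those of that theorem and then extracting the claimed regularity. First I would recall Fenichel's result in the form suited to our notation: if $\bar{\Omega}$ is a $C^{k}$ compact overflowing invariant manifold with boundary for a $C^{k}$ vector field $\mathcal{F}_{0}$, and the generalized Lyapunov-type numbers satisfy $\alpha(p)<1$ and $\sigma(p)<1$ on $\Omega$, then every $C^{k}$ vector field $C^{1}$-close to $\mathcal{F}_{0}$ carries an overflowing invariant manifold that is $C^{k}$-diffeomorphic to $\bar{\Omega}$. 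Granting this, the entire task is to verify that the data $(\bar{\Omega},\mathcal{F}_{0})$ of the lemma fulfill these conditions.

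The compactness and $C^{k}$ regularity of $\bar{\Omega}$, together with its overflowing invariance under $\mathcal{F}_{0}$, are assumed outright. The substantive step is to upgrade the pointwise bounds $\alpha(p)<1$ and $\sigma(p)<1$ to the uniform bounds $\sup_{p\in\Omega}\alpha(p)<1$ and $\sup_{p\in\Omega}\sigma(p)<1$ that underlie Fenichel's estimates. Here I would use that $\alpha(p)$ and $\sigma(p)$ are defined as infima of admissible exponents controlling the linearized flow $\Phi(t;p)$ through its normal and tangential components $w(t)=\Pi(\Phi(t;p)w_{0})$ and $v(t)=\Phi(t;p)v_{0}$. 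Since $\Phi(t;p)$ and the projection $\Pi$ depend continuously on $p$, the maps $p\mapsto\alpha(p)$ and $p\mapsto\sigma(p)$ are upper semicontinuous on $\Omega$, and combining upper semicontinuity with the compactness of $\bar{\Omega}$ converts the strict pointwise inequalities into strict uniform ones.

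With the uniform bounds in hand, Fenichel's theorem applies and yields the manifold $\bar{\Omega}_{\rm pert}$, overflowing invariant under $\mathcal{F}_{\rm pert}$ and $C^{k}$-diffeomorphic to $\bar{\Omega}$; both the overflowing invariance and the diffeomorphism are part of the conclusion of that theorem, so no separate construction is needed. The main obstacle I anticipate is precisely this uniformity step near the boundary $\partial\Omega$: the Lyapunov-type numbers are defined through backward-time limits $t\to-\infty$ while $p$ ranges over the (non-closed) interior $\Omega$, so one must check that the semicontinuity and the resulting uniform control persist up to $\partial\Omega$, using that backward orbits of boundary points enter $\Omega$ immediately by the outward-pointing condition. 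A secondary point I would spell out is the regularity class: I would confirm that Fenichel's smoothness estimate genuinely delivers $C^{k}$ for the data at hand, tracing how the condition $\sigma(p)<1$ governs the order of differentiability so that the asserted $C^{k}$ diffeomorphism is justified rather than merely $C^{1}$.
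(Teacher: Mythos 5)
Your proposal takes essentially the same route as the paper: the paper gives no independent proof of this lemma, deriving it directly from Fenichel's persistence theorem for overflowing invariant manifolds \cite[Theorem 1]{Fenichel-71}, exactly as you do, so your reduction-plus-hypothesis-check is the paper's argument with the implicit steps (uniformity of $\alpha(p)<1$, $\sigma(p)<1$ via semicontinuity and compactness) made explicit. The one point you rightly flag but defer---whether $\sigma(p)<1$ truly yields a $C^{k}$ rather than merely $C^{1}$ perturbed manifold---is genuinely delicate and is glossed over by the paper as well: Fenichel's theorem requires $\sigma(p)<1/k$ for the $C^{k}$ conclusion, a hypothesis the lemma as stated does not impose, so this is a defect of the statement itself rather than a divergence between your proof and the paper's.
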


We shall use this lemma to establish the existence of the local center-stable manifold $W_{\rm loc}^{\rm cs}(\tilde{\Lambda}^{+})$.

\begin{lemma}\label{prop-NHIM-1}
Suppose that assumption {\bf (H3)} holds
and the future limit system \eqref{eq:NDS-future} has a compact invariant set $\Lambda^{+}$,
which is a normally hyperbolic invariant manifold.
Then $\tilde{\Lambda}^{+}=(\Lambda^{+},1)$ has a local $C^{k}$-smooth center-stable manifold $W_{\rm loc}^{\rm cs}(\tilde{\Lambda}^{+})$,
relative to some neighborhood $V$ of $\tilde{\Lambda}^{+}$,
and $W^{\rm cs}_{\rm loc}(\tilde{\Lambda}^{+})\subset w_{\rm loc}^{\rm s}(\tilde{\Lambda}^{+})$
in the extended phase space of the compactified system \eqref{eq:comp}.
\end{lemma}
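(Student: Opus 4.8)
The plan is to realize $W^{\rm cs}_{\rm loc}(\tilde\Lambda^{+})$ as an overflowing invariant manifold and to produce it through the perturbation result Lemma \ref{thm-Fenichel-1}. Since a center-stable manifold attracts forward orbits, I would first pass to the time-reversed compactified field $\hat{\mathcal F}:=(-F,-G)$. Under $\hat{\mathcal F}$ the bundles $\mathcal{E}^{\rm s}_{+}$ and $\mathcal{E}^{\rm u}_{+}$ interchange roles, the center bundle $\mathcal{E}^{\rm c}_{+}=T\Lambda^{+}\oplus\langle(0,\dots,0,1)\rangle$ is preserved, and the object we seek becomes a center-unstable manifold of $\tilde\Lambda^{+}$, that is, a manifold modeled on overflowing invariance (backward orbits remain inside, the flow escapes through the boundary forward).

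To apply Lemma \ref{thm-Fenichel-1} I need a model field carrying a genuine overflowing invariant manifold. The natural choice is the frozen field $\hat{\mathcal F}_{0}:=(-f(x,\mu^{+}),-G(s))$, for which every slice $\{s=c\}$ is invariant and carries the reversed future limit dynamics; in particular $\Lambda^{+}$ persists in each slice with reversed unstable manifold equal to $W^{\rm s}(\Lambda^{+})$. I would then take
\[
\bar\Omega_{0}:=\overline{W^{\rm s}_{\rm loc}(\Lambda^{+})}\times[1-\delta,1],
\]
a compact $C^{k}$ manifold with boundary, and verify it is overflowing invariant under $\hat{\mathcal F}_{0}$: backward orbits contract in $x$ toward $\Lambda^{+}$ and push $s$ monotonically to $1$, hence stay in $\bar\Omega_{0}$, while on the transverse faces $\partial W^{\rm s}_{\rm loc}(\Lambda^{+})\times[1-\delta,1]$ and $\overline{W^{\rm s}_{\rm loc}(\Lambda^{+})}\times\{1-\delta\}$ the field points strictly outward. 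The normal bundle of $\bar\Omega_{0}$ corresponds to $\mathcal{E}^{\rm u}_{+}$, so a direct computation using the rates $\alpha<\beta$ from the definition of normal hyperbolicity gives $\alpha(p)=e^{-\beta}<1$ and $\sigma(p)\le\alpha/\beta<1$; crucially the added $s$-direction is \emph{sub}exponential because $G(1)=G'(1)=0$, so it never spoils these estimates. With $\bar\Omega_{0}$, $\alpha(p)<1$ and $\sigma(p)<1$ in hand, I would invoke Lemma \ref{thm-Fenichel-1} with $\hat{\mathcal F}_{\rm pert}:=\hat{\mathcal F}$, using that $\hat{\mathcal F}$ is $C^{1}$-close to $\hat{\mathcal F}_{0}$ near $\tilde\Lambda^{+}$: by the $C^{k}$ extension (Theorem \ref{thm-transf-cond}) the map $F(x,s)=f(x,\mu(h(s)))$ and its first derivatives converge to $f(x,\mu^{+})$ as $s\to1$, so shrinking $\delta$ makes the perturbation arbitrarily small. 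The lemma then yields a $C^{k}$ overflowing invariant manifold $\bar\Omega_{\rm pert}$, $C^{k}$-diffeomorphic to $\bar\Omega_{0}$.

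Finally I would reverse time once more and read off the conclusion. Transported back to the original compactified system, $\bar\Omega_{\rm pert}$ is a $C^{k}$ locally invariant manifold tangent to $\mathcal{E}^{\rm s}_{+}\oplus\mathcal{E}^{\rm c}_{+}$ along $\tilde\Lambda^{+}$ (the tangency being inherited from the bundle splitting of $\bar\Omega_{0}$ through the diffeomorphism), hence a local center-stable manifold $W^{\rm cs}_{\rm loc}(\tilde\Lambda^{+})$ relative to a neighborhood $V$. For the inclusion $W^{\rm cs}_{\rm loc}(\tilde\Lambda^{+})\subset w^{\rm s}_{\rm loc}(\tilde\Lambda^{+})$ I would take $p$ in this manifold and track its forward orbit: since $\dot s=G(s)>0$ forces $s\nearrow1$ and the field converges there to the future limit system, the $x$-component is driven into the forward-attracting local stable manifold of $\Lambda^{+}$, so the orbit remains in $V$ and $\omega(p)\subset\Lambda^{+}\times\{1\}=\tilde\Lambda^{+}$.

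The main obstacle is the degenerate face $\{s=1\}\cap\bar\Omega_{0}$, where $G(1)=0$ makes the field tangent, so the strictly-outward boundary hypothesis of the overflowing-invariance framework fails exactly along the extra center direction. The resolution I would pursue is to exploit that $\{s=1\}$ is invariant for both $\hat{\mathcal F}_{0}$ and $\hat{\mathcal F}$—no orbit ever crosses it—so it is not an escape boundary and transversality is genuinely required only on the remaining faces. Making this precise, together with ensuring that the $C^{1}$-closeness of $\hat{\mathcal F}$ to $\hat{\mathcal F}_{0}$ stays uniform as the width $\delta$ shrinks, is where the real work lies.
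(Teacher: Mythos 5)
Your proposal follows the same broad strategy as the paper (time reversal, comparison with the frozen limit field $(-f(x,\mu^{+}),-G(s))$, and persistence of overflowing invariant manifolds via Lemma \ref{thm-Fenichel-1}), but the ``main obstacle'' you name at the end is a genuine, unresolved gap, not a technicality to be deferred. Your candidate manifold $\bar\Omega_{0}=\overline{W^{\rm s}_{\rm loc}(\Lambda^{+})}\times[1-\delta,1]$ has an entire face contained in the invariant plane $\{s=1\}$, where the vector field is tangent to the boundary. Lemma \ref{thm-Fenichel-1} requires the field to point \emph{strictly outward on all of} $\partial\Omega$, and Fenichel's persistence argument (a graph transform over an overflowing neighborhood) genuinely uses this; it does not apply when a boundary face is itself invariant. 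Arguing that $\{s=1\}$ ``is not an escape boundary, so transversality is only needed on the other faces'' is not a proof: no version of the persistence theorem for such mixed tangent/transverse boundaries is available in the paper or supplied by you, so the key step of your argument cannot be executed as written.

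The paper eliminates this degenerate face by a reflection (doubling) trick that your outline is missing: it introduces the auxiliary system \eqref{eq:mod-1}, defined by $\tilde F(x,s)=F(x,s+1),\ \tilde G(s)=G(s+1)$ for $-2\le s\le 0$ and $\tilde F(x,s)=F(x,1-s),\ \tilde G(s)=-G(1-s)$ for $0<s<s_{0}$, so that the boundary plane $\{s=1\}$ becomes the \emph{interior} slice $\{s=0\}$ with the $s$-dynamics attracting from both sides. After time reversal, the block $\bar U_{2}\times\{|s|\le\epsilon\}$ has the $s$-component of the field pointing strictly outward at both faces $s=\pm\epsilon$, and no portion of its boundary lies in an invariant plane, so Lemma \ref{thm-Fenichel-1} applies; the degenerate-face problem never arises. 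A second, smaller gap in your construction: you assert that the field points strictly outward on the lateral face $\partial W^{\rm s}_{\rm loc}(\Lambda^{+})\times[1-\delta,1]$, but the boundary of an arbitrary local stable manifold need not be transverse to the flow; the paper fixes this by invoking isolating-block theory (Conley--Easton) to choose a sub-neighborhood $U_{2}\subset W^{\rm s}_{\rm loc}(\Lambda^{+})$ whose closure is $C^{k}$ and overflowing invariant under the reversed limit flow. With both repairs --- the reflection across $\{s=1\}$ and the isolating-block choice of $U_{2}$ --- your outline becomes essentially the paper's proof.
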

\begin{proof}
To prove this lemma,
we only need to prove that this lemma holds for the invariant manifold $(\Lambda^{+},0)$ of
the following auxiliary system:
\begin{eqnarray}\label{eq:mod-1}
\dot x =\tilde{F}(x,s),
\ \ \
\dot s =\tilde{G}(s),
\end{eqnarray}
such that
\begin{eqnarray*}
\tilde{F}(x,s)=\left\{
\begin{aligned}
&F(x,s+1), && \mbox{ for }\ -2\leq s\leq 0,\\
&F(x,1-s), && \mbox{ for }\ 0<s<s_{0},
\end{aligned}
\right.
\ \ \
\tilde{G}(s)=\left\{
\begin{aligned}
& G(s+1), && \mbox{ for }\ -2\leq s\leq 0,\\
& -G(1-s), && \mbox{ for }\ 0<s<s_{0},
\end{aligned}
\right.
\end{eqnarray*}
where $0<s_{0}<1$ and $\tilde{G}(0)=\tilde{G}'(0)=0$.
Let $U_{1}\subset U$ be a bounded open neighborhood of the invariant manifold $\Lambda^{+}$ in $\{s=0\}$.
Consider system \eqref{eq:mod-1} with time reversed, i.e.,
\begin{eqnarray}\label{eq:mod-2}
\dot x =-\tilde{F}(x,s),
\ \ \
\dot s =-\tilde{G}(s).
\end{eqnarray}
Then we can choose a sufficiently small $\epsilon$ with $0<\epsilon<s_{0}/2$ such that
system \eqref{eq:mod-2} can be arbitrarily $C^{1}$-close to
\begin{eqnarray}\label{eq:mod-3}
\dot x =-f(x,\mu^{+}),
\ \ \
\dot s =-\tilde{G}(s),
\end{eqnarray}
uniformly in the set $V_{1}:=\{(x,s):\,x\in \bar{U}_{1},\,|s|\leq \epsilon\}$.

We now construct a manifold with the overflowing property for system \eqref{eq:mod-3}.
Recall that $\Lambda^{+}$ is a normally hyperbolic invariant manifold of the future limit system \eqref{eq:NDS-future}.
Then by Fenichel's theorem \cite[Theorem 4]{Fenichel-71},
there exists a $C^{k}$ local stable manifold $W^{\rm s}_{\rm loc}(\Lambda^{+})$ of $\Lambda^{+}$,
relative to the neighborhood $U_{1}$ of $\Lambda^{+}$, for the future limit system \eqref{eq:NDS-future},
where $W^{\rm s}_{\rm loc}(\Lambda^{+})$ contains $\Lambda^{+}$ and is tangent to $\mathcal{E}^{\rm s}_{+}\oplus T\Lambda^{+}$.
By the theory of isolating block \cite{Conley-Easton-71},
there exists a neighborhood $U_{2}$ of $\Lambda^{+}$ inside $W^{\rm s}_{\rm loc}(\Lambda^{+})$
such that $\bar{U}_{2}=U_{2}\cup \partial U_{2}$ is $C^{k}$ and overflowing invariant
under the flow of the future limit system \eqref{eq:NDS-future} with time reversed, i.e., the first equation in \eqref{eq:mod-3}.
Note that two equations in \eqref{eq:mod-3} are independent,
and $-\tilde{G}(s)<0$ for $-\epsilon\leq s<0$ and $-\tilde{G}(s)>0$ for $0<s\leq \epsilon$.
Then the $C^{k}$ compact submanifold $\bar{U}_{2}\times \{|s|\leq \epsilon\}$ of $\mathbb{R}^{N+1}$
is overflowing invariant under system \eqref{eq:mod-3}.

To finish the proof, we now compute the generalized Lyapunov-type numbers.
By the normal hyperbolicity of $\Lambda^{+}$,
the normal direction of the overflowing invariant manifold
$\bar{U}_{2}\times \{|s|\leq \epsilon\}$ of  system \eqref{eq:mod-3} is exponentially stable.
Then $\alpha(p)<1$ for all $p\in \bar{U}_{2}\times \{|s|\leq \epsilon\}$.
Since $\tilde{G}'(0)=0$, we can further compute $\sigma(p)<1$ for all $p\in \bar{U}_{2}\times \{|s|\leq \epsilon\}$.
By Lemma \ref{thm-Fenichel-1},
there exists a sufficiently small $\epsilon_{0}>0$ and a $C^{k}$ manifold
$W^{\rm cs}_{\rm loc}((\Lambda^{+},0))$
such that $W^{\rm cs}_{\rm loc}((\Lambda^{+},0))$ contains $(\Lambda^{+},0)$,
and is $C^{k}$ diffeomorphic to $\bar{U}_{2}\times \{|s|\leq \epsilon\}$
for $0<\epsilon\leq \epsilon_{0}$, overflowing invariant under system \eqref{eq:mod-2}
and tangent to $\mathcal{E}^{\rm s}_{+}\oplus \mathcal{E}^{\rm c}_{+}$.
Consequently, $W^{\rm cs}_{\rm loc}((\Lambda^{+},0))$
is a local center-stable manifold of $(\Lambda^{+},0)$ for system \eqref{eq:mod-1}.
It follows that $W^{\rm cs}_{\rm loc}((\Lambda^{+},0))\subset w^{\rm s}((\Lambda^{+},0))$.
This finishes the proof.
\end{proof}

For convenience, we write the vector field governed by the compactified system \eqref{eq:comp} as $\mathcal{F}(X)$,
where $X=(x,s)^{*}$. Then we can write the compactified system \eqref{eq:comp} into a simple form
\begin{eqnarray}\label{eq:simple-form}
\dot X=\mathcal{F}(X).
\end{eqnarray}
By assumption {\bf (H3)} and Theorem \ref{thm-transf-cond},
we see that system \eqref{eq:simple-form} is at least $C^{2}$-smooth.
With no confusion, we still use $\Psi_{t}$ ($t\in\mathbb{R}$) to denote the corresponding flow of this system.

\begin{lemma}\label{lm:est-1}
Let $X_{1}$ and $X_{2}$ be the solutions of system  \eqref{eq:simple-form}
with the initial conditions $X(0)=X_{1}(0)$ and $X(0)=X_{2}(0)$, respectively,
where the set $V$ is defined  as in Lemma \ref{prop-NHIM-1}.
Suppose that $X_{1}(t)$ and $X_{2}(t)$ stay in the set $V$ for all $t\geq 0$.
Then
\begin{eqnarray*}
|X_{2}(t)-X_{1}(t)|\leq |X_{2}(0)-X_{1}(0)| \exp(M_{1}t),\ \ \ \ \ t\geq 0,
\end{eqnarray*}
where $M_{1}$ is independent of the initial conditions $X_{1}(0)$ and $X_{2}(0)$,
and satisfies
$$M_{1}=\max_{X\in \bar{V}}\|D\mathcal{F}(X)\|.$$
\end{lemma}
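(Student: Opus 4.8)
The plan is to reduce the estimate to a standard application of Gronwall's inequality, with the Lipschitz constant supplied by the uniform bound $M_1$ on the operator norm of $D\mathcal{F}$ over the compact set $\bar V$. First I would recast the two solutions in integral form: since $X_1$ and $X_2$ solve \eqref{eq:simple-form}, for every $t\ge 0$ one has $X_i(t)=X_i(0)+\int_0^t\mathcal{F}(X_i(\tau))\,d\tau$ for $i=1,2$. Subtracting these and writing $u(t):=|X_2(t)-X_1(t)|$ yields
\[
u(t)\le u(0)+\int_0^t\bigl|\mathcal{F}(X_2(\tau))-\mathcal{F}(X_1(\tau))\bigr|\,d\tau,\qquad t\ge 0.
\]

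Next I would bound the integrand. By assumption {\bf (H3)} together with Theorem \ref{thm-transf-cond}, the vector field $\mathcal{F}$ is at least $C^2$, hence $C^1$, on $\bar V$, and $M_1=\max_{X\in\bar V}\|D\mathcal{F}(X)\|$ is finite because $\bar V$ is compact. Taking $V$ convex (shrinking it to a convex neighborhood of $\tilde{\Lambda}^+$ if necessary, which is harmless for this local statement), the mean value inequality for vector-valued maps gives
\[
\bigl|\mathcal{F}(X_2(\tau))-\mathcal{F}(X_1(\tau))\bigr|
=\left|\int_0^1 D\mathcal{F}\bigl(X_1(\tau)+\theta(X_2(\tau)-X_1(\tau))\bigr)(X_2(\tau)-X_1(\tau))\,d\theta\right|
\le M_1\,u(\tau),
\]
since the hypotheses guarantee $X_1(\tau),X_2(\tau)\in\bar V$ for all $\tau\ge 0$ and, by convexity, the joining segment stays in $\bar V$. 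Substituting this into the previous inequality produces $u(t)\le u(0)+M_1\int_0^t u(\tau)\,d\tau$.

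Finally I would invoke Gronwall's inequality for the nonnegative continuous function $u$, obtaining $u(t)\le u(0)\exp(M_1 t)$ for all $t\ge 0$, which is precisely the claimed bound; note that $M_1$ depends only on $\mathcal{F}$ and $\bar V$ and not on the particular initial data $X_1(0),X_2(0)$, as required. The one point needing care, and the main (if minor) obstacle, is the convexity used in the mean value step: if the neighborhood $V$ from Lemma \ref{prop-NHIM-1} is not convex, the straight segment between $X_1(\tau)$ and $X_2(\tau)$ could leave $\bar V$, and one would instead integrate $\|D\mathcal{F}\|$ along a path contained in $\bar V$ or pass to a convex subneighborhood. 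Since $V$ is a local neighborhood of $\tilde{\Lambda}^+$ that we may choose freely, selecting it convex removes the difficulty and the estimate follows.
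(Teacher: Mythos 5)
Your proposal is correct and follows essentially the same route as the paper: write the difference of solutions in integral form, bound $|\mathcal{F}(X_{2}(\tau))-\mathcal{F}(X_{1}(\tau))|$ by $M_{1}|X_{2}(\tau)-X_{1}(\tau)|$ via the mean value estimate, and conclude with Gronwall's inequality. Your extra care about convexity of $V$ (so that the segment joining $X_{1}(\tau)$ and $X_{2}(\tau)$ stays in $\bar{V}$) is a legitimate refinement of a point the paper's proof passes over silently, and shrinking $V$ to a convex neighborhood, as you suggest, resolves it without affecting the statement.
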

\begin{proof}
Since $X_{i}(t)\in V$ for all $t\geq 0$ and $i=1,2$,
by the Mean Value Theorem we have
\[
|\mathcal{F}(X_{2}(t))-\mathcal{F}(X_{1}(t))|
\leq \max_{X\in \bar{V}}\|D\mathcal{F}(X)\|\cdot |X_{2}(t)-X_{1}(t)|
=M_{1}|X_{2}(t)-X_{1}(t)|
\]
for each $t\geq 0$. This together with
\[
X_{2}(t)-X_{1}(t)=X_{2}(0)-X_{1}(0)+\int_{0}^{t}(\mathcal{F}(X_{2}(\tau))-\mathcal{F}(X_{1}(\tau)))d\tau
\]
yields
\begin{eqnarray*}
|X_{2}(t)-X_{1}(t)|
\!\!\!&\leq&\!\!\!
 |X_{2}(0)-X_{1}(0)|+\int_{0}^{t}|\mathcal{F}(X_{2}(\tau))-\mathcal{F}(X_{1}(\tau))|d\tau\\
\!\!\!&\leq&\!\!\!
 |X_{2}(0)-X_{1}(0)|+M_{1}\int_{0}^{t}|X_{2}(\tau)-X_{1}(\tau)|d\tau, \ \ \ \ t\geq 0.
\end{eqnarray*}
Therefore, the proof is finished by the Gronwall inequality.
\end{proof}

By the Tubular Neighborhood Theorem,
we can choose $V$ in Lemma \ref{prop-NHIM-1} as a neighborhood of the local center-manifold $W^{\rm cs}_{\rm loc}(\tilde{\Lambda}^{+})$
such that for each $\xi\in V$, there exists a unique point $\xi_{\rm cs}\in W^{\rm cs}_{\rm loc}(\tilde{\Lambda}^{+})$ satisfying
\begin{eqnarray*}
\rho(\xi,\xi_{\rm cs})=\rho(\xi,W^{\rm cs}_{\rm loc}(\tilde{\Lambda}^{+})).
\end{eqnarray*}
Define $\Pi_{\rm cs}:V\to W^{\rm cs}_{\rm loc}(\tilde{\Lambda}^{+})$ and $\Pi_{\rm u}:V\to V$ by
\begin{eqnarray*}
\Pi_{\rm cs}(\xi)=\xi_{\rm cs},
\ \ \ \ \
 \Pi_{\rm u}(\xi)=\xi-\xi_{\rm cs},
\ \ \ \ \ \xi\in V,
\end{eqnarray*}
and $\mathcal{P}:W^{\rm cs}_{\rm loc}(\tilde{\Lambda}^{+}) \to \Lambda^{+}$ by
\begin{eqnarray*}
\mathcal{P}(\xi)=\zeta,
\ \ \
\rho(\xi,\zeta)=\rho(\xi,\Lambda^{+}),
\ \ \ \ \ \xi\in W^{\rm cs}_{\rm loc}(\tilde{\Lambda}^{+}).
\end{eqnarray*}
Since the invariant set $\Lambda^{+}$ is compact, the map $\mathcal{P}$ is well-defined.
By the Whitney Embedding Theorem \cite{Whitney-36},
we can assume that the maps $\Pi_{\rm cs}$ and $\Pi_{\rm u}$ are $C^{k}$ ($k\geq 2$).
In particular, we have
\[
D\Pi_{\rm cs}(\zeta)=P_{\rm cs}(\zeta),
\ \ \ \ \
D\Pi_{\rm u}(\zeta)=P_{\rm u}(\zeta),
\ \ \ \ \ \zeta \in \Lambda^{+},
\]
where $P_{\rm cs}(\cdot):=P^{+}_{\rm cs}(\cdot)$ and $P_{\rm u}(\cdot):=P^{+}_{\rm u}(\cdot)$ are defined below \eqref{eq:direct}.

\begin{lemma}\label{lm:prj}
For each point $\xi\in V$,
let $\xi_{\rm cs}=\Pi_{\rm cs}(\xi)$ and $\zeta=\mathcal{P}(\xi_{\rm cs})$.
Then there exists a sufficiently small $\varepsilon_{0}>0$ such that
for each $\varepsilon$ with $0<\varepsilon<\varepsilon_{0}$ and each $\xi\in V$
with $\rho(\xi,\zeta)<\varepsilon$ and $\rho(\xi_{\rm cs},\zeta)<\varepsilon$,
\begin{eqnarray*}
\frac{|\omega(0)|}{2}\leq |P_{\rm u}(\zeta)\omega(0)|\leq \frac{3|\omega(0)|}{2},
\ \ \
|P_{\rm cs}(\zeta)\omega(0)|\leq \frac{|\omega(0)|}{2},
\ \ \
\omega(0)=\xi-\xi_{\rm cs}.
\end{eqnarray*}
\end{lemma}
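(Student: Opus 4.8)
The plan is to reduce the whole statement to the single estimate $|P_{\rm cs}(\zeta)\omega(0)|\le \frac{|\omega(0)|}{2}$. Indeed, since $P_{\rm cs}(\zeta)$ and $P_{\rm u}(\zeta)$ are complementary projections, $P_{\rm cs}(\zeta)+P_{\rm u}(\zeta)=I$, so $P_{\rm u}(\zeta)\omega(0)=\omega(0)-P_{\rm cs}(\zeta)\omega(0)$, and the triangle inequality turns the bound on the center-stable component into the two-sided bound
\begin{eqnarray*}
\frac{|\omega(0)|}{2}\le |P_{\rm u}(\zeta)\omega(0)|\le \frac{3|\omega(0)|}{2}.
\end{eqnarray*}
Thus the entire proof rests on controlling $P_{\rm cs}(\zeta)\omega(0)$, and the remaining two inequalities in the statement are automatic.

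The key observation I would use is that $\omega(0)=\xi-\xi_{\rm cs}$ is, by the defining property of the nearest-point projection $\Pi_{\rm cs}$, orthogonal to the tangent space $T_{\xi_{\rm cs}}W^{\rm cs}_{\rm loc}(\tilde{\Lambda}^{+})$. Because $\xi_{\rm cs}$ lies on the $C^{k}$ ($k\ge 2$) manifold $W^{\rm cs}_{\rm loc}(\tilde{\Lambda}^{+})$, the derivative $D\Pi_{\rm cs}(\xi_{\rm cs})$ is precisely the orthogonal projection onto $T_{\xi_{\rm cs}}W^{\rm cs}_{\rm loc}(\tilde{\Lambda}^{+})$ (it restricts to the identity on the tangent space and annihilates the normal space). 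Hence $D\Pi_{\rm cs}(\xi_{\rm cs})\omega(0)=0$. Combining this with the tangency relation $P_{\rm cs}(\zeta)=D\Pi_{\rm cs}(\zeta)$ for $\zeta\in\Lambda^{+}$, established just before the lemma, I would write
\begin{eqnarray*}
P_{\rm cs}(\zeta)\omega(0)=D\Pi_{\rm cs}(\zeta)\omega(0)=\bigl(D\Pi_{\rm cs}(\zeta)-D\Pi_{\rm cs}(\xi_{\rm cs})\bigr)\omega(0),
\end{eqnarray*}
which immediately yields $|P_{\rm cs}(\zeta)\omega(0)|\le \|D\Pi_{\rm cs}(\zeta)-D\Pi_{\rm cs}(\xi_{\rm cs})\|\cdot|\omega(0)|$.

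To close the argument, I would invoke continuity and compactness. Since $\Pi_{\rm cs}$ is $C^{k}$ with $k\ge 2$, its derivative $D\Pi_{\rm cs}$ is continuous, and $\Lambda^{+}$ is compact, so $D\Pi_{\rm cs}$ is uniformly continuous on a compact neighborhood of $\Lambda^{+}$ inside $V$. Therefore there is $\varepsilon_{0}>0$ such that $\rho(\xi_{\rm cs},\zeta)<\varepsilon_{0}$ forces $\|D\Pi_{\rm cs}(\zeta)-D\Pi_{\rm cs}(\xi_{\rm cs})\|<\frac{1}{2}$. Under the hypotheses $\rho(\xi,\zeta)<\varepsilon$ and $\rho(\xi_{\rm cs},\zeta)<\varepsilon$ with $0<\varepsilon<\varepsilon_{0}$, this gives $|P_{\rm cs}(\zeta)\omega(0)|\le \frac{|\omega(0)|}{2}$, and the first paragraph then completes the proof.

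The main obstacle is the clean derivation of $D\Pi_{\rm cs}(\xi_{\rm cs})\omega(0)=0$. Everything hinges on correctly using two facts in tandem: that the derivative of the nearest-point projection, evaluated at a base point lying on the manifold, is the orthogonal projection onto the tangent space (this is where the $C^{k}$, $k\ge2$, regularity of $W^{\rm cs}_{\rm loc}(\tilde{\Lambda}^{+})$ enters), and that the embedding has been arranged so that the invariant projection $P_{\rm cs}(\zeta)$ coincides with $D\Pi_{\rm cs}(\zeta)$ at points of $\Lambda^{+}$, i.e.\ so that $\mathcal{E}^{\rm u}_{+}$ is orthogonal to $\mathcal{E}^{\rm s}_{+}\oplus\mathcal{E}^{\rm c}_{+}$ there. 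Once that alignment is secured, the estimate is a routine continuity-and-compactness argument requiring no delicate computation.
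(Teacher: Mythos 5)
Your proposal is correct, but it proves the lemma by a genuinely different mechanism than the paper. The paper works with the normal component $\Pi_{\rm u}$ rather than $\Pi_{\rm cs}$: it uses the exact identities $\Pi_{\rm u}(\xi)=\omega(0)$ and $\Pi_{\rm u}(\xi_{\rm cs})=0$ to write $\omega(0)=P_{\rm u}(\zeta)\omega(0)+\bigl(\Pi_{\rm u}(\xi)-\Pi_{\rm u}(\xi_{\rm cs})-D\Pi_{\rm u}(\zeta)\omega(0)\bigr)$, and then controls the remainder by applying the Mean Value Theorem twice, which yields the quantitative bound $\varepsilon M_{2}|\omega(0)|$ with $M_{2}=\sup_{\bar V}\|D^{2}\Pi_{\rm u}\|$; it never invokes any orthogonality of $\omega(0)$ to the tangent space. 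You instead exploit first-order optimality of the nearest-point projection --- $\omega(0)\perp T_{\xi_{\rm cs}}W^{\rm cs}_{\rm loc}(\tilde{\Lambda}^{+})$, hence $D\Pi_{\rm cs}(\xi_{\rm cs})\omega(0)=0$ --- so that $P_{\rm cs}(\zeta)\omega(0)=\bigl(D\Pi_{\rm cs}(\zeta)-D\Pi_{\rm cs}(\xi_{\rm cs})\bigr)\omega(0)$, and you close with mere uniform continuity of $D\Pi_{\rm cs}$; the two-sided $P_{\rm u}$ bounds then follow from complementarity of the projections exactly as in the paper. Your route is more economical in regularity (continuity of the first derivative suffices, versus the paper's bound on the second derivative) and it isolates the geometric content cleanly, but it leans on two facts you correctly flag: that $\Pi_{\rm cs}$ is literally the metric nearest-point map (so its derivative on the manifold is the orthogonal projection onto the tangent space) and that the paper's embedding makes $D\Pi_{\rm cs}(\zeta)=P_{\rm cs}(\zeta)$ on $\Lambda^{+}$, i.e.\ that $\mathcal{E}^{\rm u}_{+}$ is orthogonal to $\mathcal{E}^{\rm s}_{+}\oplus\mathcal{E}^{\rm c}_{+}$ there; both are set up just before the lemma, so this is legitimate. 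What the paper's heavier argument buys is an explicit remainder constant linear in $\varepsilon$, and the same MVT-with-$M_{2}$ computation is recycled verbatim in the proof of Theorem \ref{thm-NHIM-1} (where $\varepsilon M_{2}\exp(2M_{1}T)$ enters the expansion constant $\theta$), whereas your qualitative continuity bound proves the lemma but would not by itself supply that later estimate.
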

\begin{proof}
Note that $\Pi_{\rm cs}(\xi_{\rm cs})=\xi_{\rm cs}$ and $\Pi_{\rm u}(\xi_{\rm cs})=0$.
Then
\begin{eqnarray}\label{prj-u}
\begin{aligned}
\omega(0)&=\Pi_{\rm u}(\xi)\\
   &=\Pi_{\rm u}(\xi)-\Pi_{\rm u}(\xi_{\rm cs})\\
   &=P_{\rm u}(\zeta)\omega(0)+(\Pi_{\rm u}(\xi)-\Pi_{\rm u}(\xi_{\rm cs})-D\Pi_{\rm u}(\zeta)\omega(0)).
\end{aligned}
\end{eqnarray}
By the Mean Value Theorem,
there exists a constant $\lambda$ with $0\leq \lambda\leq 1$ such that
\begin{eqnarray*}
\Pi_{\rm u}(\xi)-\Pi_{\rm u}(\xi_{\rm cs})-D\Pi_{\rm u}(\zeta)\omega(0)
=(D\Pi_{\rm u}(\lambda\xi+(1-\lambda)\xi_{\rm cs})-D\Pi_{\rm u}(\zeta))\omega(0).
\end{eqnarray*}
Applying the Mean Value Theorem again,
for each $\xi\in V$ with $\rho(\xi,\zeta)<\varepsilon$ and $\rho(\xi_{\rm cs},\zeta)<\varepsilon$, we have
\begin{eqnarray*}
|\Pi_{\rm u}(\xi)-\Pi_{\rm u}(\xi_{\rm cs})-D\Pi_{\rm u}(\zeta)\omega(0)|
\leq \varepsilon M_{2}|\omega(0)|,
\end{eqnarray*}
where $M_{2}:=\sup_{\xi\in \bar{V}}\|D^{2}\Pi_{u}(\xi)\|$.
It follows that  there exists a small $\varepsilon_{0}>0$ such that
for each $\varepsilon$ with $0<\varepsilon<\varepsilon_{0}$,
\begin{eqnarray*}
|\Pi_{\rm u}(\xi)-\Pi_{\rm u}(\xi_{\rm cs})-D\Pi_{\rm u}(\zeta)\omega(0)|
\leq \frac{1}{2}|\omega(0)|.
\end{eqnarray*}
This together with \eqref{prj-u} and $\omega(0)=P_{\rm u}(\zeta)\omega(0)+P_{\rm cs}(\zeta)\omega(0)$ yields
\begin{eqnarray*}
&&|P_{\rm u}(\zeta)\omega(0)|
\leq |\omega(0)|+|\Pi_{\rm u}(\xi)-\Pi_{\rm u}(\xi_{\rm cs})-D\Pi_{\rm u}(\zeta)\omega(0)|
\leq \frac{3}{2}|\omega(0)|,\\
&&|P_{\rm u}(\zeta)\omega(0)|
\geq |\omega(0)|-|\Pi_{\rm u}(\xi)-\Pi_{\rm u}(\xi_{\rm cs})-D\Pi_{\rm u}(\zeta)\omega(0)|
\geq \frac{1}{2}|\omega(0)|,
\end{eqnarray*}
and
\begin{eqnarray*}
|P_{\rm cs}(\zeta)\omega(0)|
\!\!\!&=&\!\!\!
|\omega(0)-P_{\rm u}(\zeta)\omega(0)|\\
\!\!\!&=&\!\!\!
|\Pi_{\rm u}(\xi)-\Pi_{\rm u}(\xi_{\rm cs})-D\Pi_{\rm u}(\zeta)\omega(0)|\\
\!\!\!&\leq &\!\!\! \frac{1}{2}|\omega(0)|.
\end{eqnarray*}
This finishes the proof.
\end{proof}

For each point $\xi\in V$,
let $\xi(t)$, $\xi_{\rm cs}(t)$ and $\zeta(t)$ to denote the solutions of system  \eqref{eq:simple-form}
with the initial conditions $X(0)=\xi$, $X(0)=\Pi_{\rm cs}(\xi)=\xi_{\rm cs}$ and $X(0)=\mathcal{P}(\xi_{\rm cs})=\zeta$, respectively.
Set
$$
\omega(t):=\xi(t)-\xi_{\rm cs}(t),\ \ \ \ \ t\geq 0.
$$
Then we have the following two lemmas.

\begin{lemma}\label{lm:est-2}
Suppose that $\rho(\xi,\zeta)<\varepsilon$ and $\rho(\xi_{\rm cs},\zeta)<\varepsilon$
for a small $\varepsilon$ with $0<\varepsilon<\varepsilon_{0}$,
and $\xi(t)$ stay in the set $V$ for all $t$ with $0\leq t\leq T$ ,
where $\varepsilon_{0}$ is defined in Lemma \ref{lm:prj}.
Then there exists a constant $M(T)>0$, dependent of $T$, such that
\begin{eqnarray*}
|P_{\rm cs}(\zeta(t))\omega(t)|
\leq Ke^{\alpha t}|P_{\rm cs}(\zeta)\omega(0)|+\varepsilon M(T)|\omega(0)|e^{\alpha t},
\ \ \ \ \ 0\leq t\leq T.
\end{eqnarray*}
\end{lemma}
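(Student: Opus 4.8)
The plan is to compare the solution $\xi(t)$ with the orbit $\zeta(t)=\Psi_t(\zeta)$ lying on the invariant manifold $\tilde{\Lambda}^{+}$, and to control the center-stable component of their difference by a variation-of-constants argument. Since $\dot\omega(t)=\mathcal{F}(\xi(t))-\mathcal{F}(\xi_{\rm cs}(t))$, I would linearize along $\zeta(t)$ and split off a remainder,
\[
\dot\omega(t)=D\mathcal{F}(\zeta(t))\,\omega(t)+R(t),\qquad R(t):=\mathcal{F}(\xi(t))-\mathcal{F}(\xi_{\rm cs}(t))-D\mathcal{F}(\zeta(t))\,\omega(t).
\]
Writing $\Phi(t;\zeta)=D\Psi_t(\zeta)$ for the fundamental solution of $\dot y=D\mathcal{F}(\zeta(t))y$, the variation-of-constants formula together with the cocycle identity $\Phi(t;\zeta)\Phi(\tau;\zeta)^{-1}=\Phi(t-\tau;\zeta(\tau))$ gives
\[
\omega(t)=\Phi(t;\zeta)\,\omega(0)+\int_0^t \Phi(t-\tau;\zeta(\tau))\,R(\tau)\,d\tau.
\]

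Next I would apply $P_{\rm cs}(\zeta(t))$ and exploit that the splitting \eqref{eq:direct} is invariant under the linearized flow. Because $\zeta(t)$ stays in $\tilde{\Lambda}^{+}$, the projection is equivariant, $P_{\rm cs}(\zeta(t))\Phi(t-\tau;\zeta(\tau))=\Phi(t-\tau;\zeta(\tau))P_{\rm cs}(\zeta(\tau))$, so that
\[
P_{\rm cs}(\zeta(t))\omega(t)=\Phi(t;\zeta)P_{\rm cs}(\zeta)\omega(0)+\int_0^t \Phi(t-\tau;\zeta(\tau))P_{\rm cs}(\zeta(\tau))R(\tau)\,d\tau.
\]
By the normal hyperbolicity estimates (with $0\le\alpha<\beta$), the stable part is contracted while the tangential and the additional neutral center part grow at most like $e^{\alpha t}$, so on the center-stable bundle $\mathcal{E}^{\rm s}_{+}\oplus\mathcal{E}^{\rm c}_{+}$ one has $|\Phi(t;p)v|\le Ke^{\alpha t}|v|$ for $t\ge 0$. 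Applied to the first term this yields exactly $Ke^{\alpha t}|P_{\rm cs}(\zeta)\omega(0)|$.

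The heart of the argument is the remainder estimate. Using $\mathcal{F}\in C^{2}$ (guaranteed by {\bf (H3)} and Theorem \ref{thm-transf-cond}), I would write $R(\tau)=\int_0^1[D\mathcal{F}(\xi_{\rm cs}(\tau)+\theta\omega(\tau))-D\mathcal{F}(\zeta(\tau))]\omega(\tau)\,d\theta$ and bound it by $L\,\sup_\theta|\xi_{\rm cs}(\tau)+\theta\omega(\tau)-\zeta(\tau)|\,|\omega(\tau)|$ with $L=\sup_{\bar V}\|D^2\mathcal{F}\|$. Both $\xi(\tau)$ and $\xi_{\rm cs}(\tau)$ start within distance $\varepsilon$ of $\zeta$, and $|\omega(0)|\le\rho(\xi,\zeta)+\rho(\xi_{\rm cs},\zeta)<2\varepsilon$; Lemma \ref{lm:est-1} then bounds the spreading, giving $|\xi_{\rm cs}(\tau)-\zeta(\tau)|\le\varepsilon e^{M_1\tau}$ and $|\omega(\tau)|\le|\omega(0)|e^{M_1\tau}<2\varepsilon e^{M_1\tau}$. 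Hence the distance factor is at most $3\varepsilon e^{M_1\tau}$, which produces
\[
|R(\tau)|\le 3L\,\varepsilon\,|\omega(0)|\,e^{2M_1\tau}.
\]
This is the crucial conversion of an a priori quadratic remainder into one proportional to $\varepsilon|\omega(0)|$.

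Finally I would insert this into the Duhamel term. The projections $P_{\rm cs}(\zeta(\tau))$ are uniformly bounded on the compact set $\tilde{\Lambda}^{+}$, say by $\bar P$, so using the center-stable bound once more,
\[
\left|\int_0^t \Phi(t-\tau;\zeta(\tau))P_{\rm cs}(\zeta(\tau))R(\tau)\,d\tau\right|\le 3KL\bar P\,\varepsilon\,|\omega(0)|\,e^{\alpha t}\int_0^t e^{(2M_1-\alpha)\tau}\,d\tau.
\]
For $0\le t\le T$ the last integral is bounded by a constant $C(T)$, so this term is at most $\varepsilon M(T)|\omega(0)|e^{\alpha t}$ with $M(T):=3KL\bar P\,C(T)$, and adding the two contributions gives the claim. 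The main obstacle is precisely the remainder bound: one must simultaneously invoke the $C^2$ regularity, the exponential spreading estimate of Lemma \ref{lm:est-1}, and the smallness $|\omega(0)|<2\varepsilon$ to keep the error linear in $\varepsilon|\omega(0)|$; the equivariance of $P_{\rm cs}$ and the center-stable growth rate $e^{\alpha t}$, both inherited from normal hyperbolicity, are the remaining ingredients that must be set up carefully.
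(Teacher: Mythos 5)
Your proposal is correct and takes essentially the same approach as the paper: your remainder $R(t)$ is exactly the paper's $\Delta_{1}(\xi)(t)$, your bound $|R(\tau)|\leq C\varepsilon|\omega(0)|e^{2M_{1}\tau}$ obtained from $C^{2}$ regularity and Lemma \ref{lm:est-1} matches the paper's estimate \eqref{est:Delta-1}, and your projected Duhamel formula is the paper's \eqref{formula-Pcs-w}, after which both arguments conclude with the $Ke^{\alpha(t-\tau)}$ center-stable growth bound and integration. The only cosmetic difference is the order of operations: the paper first derives the Sylvester-type equation $P_{\rm cs}(\zeta(t))'=D\mathcal{F}(\zeta(t))P_{\rm cs}(\zeta(t))-P_{\rm cs}(\zeta(t))D\mathcal{F}(\zeta(t))$ and applies variation of constants directly to $P_{\rm cs}(\zeta(t))\omega(t)$, whereas you apply Duhamel to $\omega(t)$ and then project using the equivariance of $P_{\rm cs}$ along the orbit.
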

\begin{proof}
Note that
\[
P_{\rm cs}(\zeta(t))\Phi(t;\zeta)=\Phi(t;\zeta)P_{\rm cs}(\zeta).
\]
This together with
\[
\Phi(t;\zeta)'=D\mathcal{F}(\zeta(t))\Phi(t;\zeta)
\]
yields
\begin{eqnarray*}
\{P_{\rm cs}(\zeta(t))\Phi(t;\zeta)\}'
=
P_{\rm cs}(\zeta(t))'\Phi(t;\zeta)+P_{\rm cs}(\zeta(t))D\mathcal{F}(\zeta(t))\Phi(t;\zeta),
\end{eqnarray*}
and
\begin{eqnarray*}
\{\Phi(t;\zeta)P_{\rm cs}(\zeta)\}'
=
D\mathcal{F}(\zeta(t))\Phi(t;\zeta)P_{\rm cs}(\zeta)
=D\mathcal{F}(\zeta(t))P_{\rm cs}(\zeta(t))\Phi(t;\zeta).
\end{eqnarray*}
It follows that
\begin{eqnarray*}
P_{\rm cs}(\zeta(t))'
=D\mathcal{F}(\zeta(t))P_{\rm cs}(\zeta(t))-P_{\rm cs}(\zeta(t))D\mathcal{F}(\zeta(t)).
\end{eqnarray*}
Using the above equation, we get
\begin{eqnarray*}
\{P_{\rm cs}(\zeta(t))\omega(t)\}'
=D\mathcal{F}(\zeta(t))P_{\rm cs}(\zeta(t))\omega(t)+P_{\rm cs}(\zeta(t))\Delta_{1}(\xi)(t),
\end{eqnarray*}
where
\begin{eqnarray*}
\Delta_{1}(\xi)(t):=\mathcal{F}(\xi(t))-\mathcal{F}(\xi_{\rm cs}(t))-D\mathcal{F}(\zeta(t))\omega(t).
\end{eqnarray*}
Then by the Variation of Constants Formula, we get
\begin{eqnarray}\label{formula-Pcs-w}
P_{\rm cs}(\zeta(t))\omega(t)
=\Phi(t;\zeta)P_{\rm cs}(\zeta)\omega(0)+\int_{0}^{t}\Phi(t;\zeta)\Phi^{-1}(\tau;\zeta)\Delta_{1}(\xi)(\tau)d\tau.
\end{eqnarray}
By the Mean Value Theorem,
there exists a function $\lambda(t)$ with $0\leq \lambda(t)\leq 1$ for $0\leq t\leq T$  such that
\[
\mathcal{F}(\xi(t))-\mathcal{F}(\xi_{\rm cs}(t))=D\mathcal{F}(\lambda(t)\xi(t)+(1-\lambda(t))\xi_{\rm cs}(t))\omega(t).
\]
Since $\rho(\xi,\zeta)<\varepsilon$, $\rho(\xi_{\rm cs},\zeta)<\varepsilon$, applying Lemma \ref{lm:est-1} yields that
there is a constant $\tilde{M}(T)>0$, dependent of $T$, such that
\begin{eqnarray}\label{est:Delta-1}
|\Delta_{1}(\xi)(t)|\leq \varepsilon \tilde{M}(T)|\omega(0)|,
\ \ \ \ \ 0\leq t\leq T.
\end{eqnarray}
Then by \eqref{formula-Pcs-w}, \eqref{est:Delta-1} and the definition of normal hyperbolicity, we have
\begin{eqnarray*}
|P_{\rm cs}(\zeta(t))\omega(t)|
\!\!\!&\leq&\!\!\!
 Ke^{\alpha t}|P_{\rm cs}(\zeta)\omega(0)|+\int_{0}^{t}Ke^{\alpha (t-\tau)}\cdot\varepsilon \tilde{M}(T)|\omega(0)| d\tau\\
\!\!\!&\leq&\!\!\!
 Ke^{\alpha t}|P_{\rm cs}(\zeta)\omega(0)|+\frac{\varepsilon K \tilde{M}(T)}{\alpha}|\omega(0)|e^{\alpha t}.
\end{eqnarray*}
This finishes the proof.
\end{proof}

\begin{lemma}\label{lm:est-3}
Suppose that $\rho(\xi,\zeta)<\varepsilon$ and $\rho(\xi_{\rm cs},\zeta)<\varepsilon$
for a small $\varepsilon$ with $0<\varepsilon<\varepsilon_{0}$,
and $\xi(t)$  stay in the set $V$ for all $t$ with $0\leq t\leq T$,
where $\varepsilon_{0}$ is defined in Lemma \ref{lm:prj}.
Then
\begin{eqnarray*}
|P_{\rm u}(\zeta(T))\omega(T)|
\geq |P_{\rm u}(\zeta)\omega(0)|e^{\beta T}K^{-1}(1+\delta(\varepsilon,T)),
\end{eqnarray*}
where $\delta(\varepsilon,T)$ satisfies that for each $T>0$,
$\delta(\varepsilon,T)\to 0$ as $\varepsilon\to0$.
\end{lemma}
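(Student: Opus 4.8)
The plan is to mirror the argument of Lemma \ref{lm:est-2}, but to track the unstable projection $P_{\rm u}(\zeta(t))\omega(t)$ and extract a lower bound rather than an upper bound. First I would record that $P_{\rm u}=I-P_{\rm cs}$ along $\zeta(t)$, so the differential identity derived for $P_{\rm cs}(\zeta(t))$ in the proof of Lemma \ref{lm:est-2} immediately yields
\begin{eqnarray*}
P_{\rm u}(\zeta(t))'=D\mathcal{F}(\zeta(t))P_{\rm u}(\zeta(t))-P_{\rm u}(\zeta(t))D\mathcal{F}(\zeta(t)).
\end{eqnarray*}
Combining this with $\omega'(t)=\mathcal{F}(\xi(t))-\mathcal{F}(\xi_{\rm cs}(t))$ and splitting the nonlinearity into its linear part plus the remainder $\Delta_{1}(\xi)(t)$ gives
\begin{eqnarray*}
\{P_{\rm u}(\zeta(t))\omega(t)\}'=D\mathcal{F}(\zeta(t))P_{\rm u}(\zeta(t))\omega(t)+P_{\rm u}(\zeta(t))\Delta_{1}(\xi)(t),
\end{eqnarray*}
with $\Delta_{1}(\xi)(t)$ exactly as in Lemma \ref{lm:est-2}. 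Applying the Variation of Constants Formula, and using the invariance $P_{\rm u}(\zeta(t))\Phi(t;\zeta)=\Phi(t;\zeta)P_{\rm u}(\zeta)$ together with the cocycle identity $\Phi(t;\zeta)\Phi^{-1}(\tau;\zeta)=\Phi(t-\tau;\zeta(\tau))$, I would obtain the representation
\begin{eqnarray*}
P_{\rm u}(\zeta(t))\omega(t)=\Phi(t;\zeta)P_{\rm u}(\zeta)\omega(0)+\int_{0}^{t}\Phi(t-\tau;\zeta(\tau))P_{\rm u}(\zeta(\tau))\Delta_{1}(\xi)(\tau)\,d\tau.
\end{eqnarray*}

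The next step is to bound the two terms. For the leading term I would convert the backward unstable decay estimate in the definition of normal hyperbolicity into a forward expansion estimate: for $v\in\mathcal{E}^{\rm u}_{\zeta}$ and $t\geq0$, writing $w=\Phi(t;\zeta)v\in\mathcal{E}^{\rm u}_{\zeta(t)}$ and applying the estimate $|\Phi(-t;\zeta(t))w|\leq Ke^{-\beta t}|w|$ with $\Phi(-t;\zeta(t))w=v$ gives
\begin{eqnarray*}
|\Phi(t;\zeta)v|\geq K^{-1}e^{\beta t}|v|,\ \ \ \ \ v\in \mathcal{E}^{\rm u}_{\zeta},\ t\geq 0.
\end{eqnarray*}
In particular $|\Phi(T;\zeta)P_{\rm u}(\zeta)\omega(0)|\geq K^{-1}e^{\beta T}|P_{\rm u}(\zeta)\omega(0)|$. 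For the integral term, since the projection norms are uniformly bounded on the compact set $\Lambda^{+}$ and $\|\Phi(t-\tau;\zeta(\tau))\|\leq \exp(M_{1}(t-\tau))$ with $M_{1}$ from Lemma \ref{lm:est-1}, the estimate \eqref{est:Delta-1} on $\Delta_{1}$ yields a bound of the form $C(T)\varepsilon|\omega(0)|$ for some constant $C(T)$ depending only on $T$.

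Finally I would combine these via the reverse triangle inequality and use Lemma \ref{lm:prj}, which gives $|\omega(0)|\leq 2|P_{\rm u}(\zeta)\omega(0)|$, to write
\begin{eqnarray*}
|P_{\rm u}(\zeta(T))\omega(T)|\geq K^{-1}e^{\beta T}|P_{\rm u}(\zeta)\omega(0)|\bigl(1-2KC(T)e^{-\beta T}\varepsilon\bigr),
\end{eqnarray*}
so setting $\delta(\varepsilon,T):=-2KC(T)e^{-\beta T}\varepsilon$ gives the claim, with $\delta(\varepsilon,T)\to0$ as $\varepsilon\to0$ for each fixed $T$. The main obstacle is that the stated normal hyperbolicity provides only a backward decay bound for unstable vectors, so there is no clean forward upper bound on $\Phi$ restricted to $\mathcal{E}^{\rm u}$; for the error term I must therefore fall back on the crude group estimate $\exp(M_{1}(t-\tau))$, and the argument closes only because $T$ is held fixed while $\varepsilon\to0$, so the resulting $T$-dependent constant is harmless. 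Care is also needed to keep all projection and remainder bounds uniform over the compact manifold $\Lambda^{+}$, so that $C(T)$ genuinely depends on $T$ alone.
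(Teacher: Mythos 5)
Your proof is correct, but it takes a genuinely different route from the paper's. The paper runs the argument \emph{backward} in time: it derives the ODE for $P_{\rm u}(\zeta(T-t))\omega(T-t)$, applies variation of constants from time $T$ down to $0$, uses the backward contraction $|\Phi(-t;p)v|\leq Ke^{-\beta t}|v|$ on $\mathcal{E}^{\rm u}$ directly as an \emph{upper} bound, splits the remainder term into its $P_{\rm u}$ and $P_{\rm cs}$ components (controlling the latter by Lemma \ref{lm:est-2} and Lemma \ref{lm:prj}), and closes with a Gronwall inequality, finally rearranging at $t=T$ to obtain the lower bound with error constant $e^{-\varepsilon KTM(T)}-\varepsilon M(T)e^{-\alpha T}$. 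You instead work \emph{forward} in time: you convert the backward unstable decay into the forward expansion $|\Phi(T;\zeta)v|\geq K^{-1}e^{\beta T}|v|$ for $v\in\mathcal{E}^{\rm u}_{\zeta}$ (a correct maneuver, using invariance of the bundle and the group property of $\Phi$), bound the inhomogeneous term crudely by the group estimate $e^{M_{1}(t-\tau)}$ together with \eqref{est:Delta-1} and compactness of the projections, and finish with the reverse triangle inequality plus Lemma \ref{lm:prj}. Your version is more elementary --- no Gronwall, and no need to feed the conclusion of Lemma \ref{lm:est-2} back into the error term --- at the price of an error constant of size roughly $Te^{(M_{1}-\beta)T}\varepsilon$ rather than the paper's hyperbolicity-rate-controlled $\varepsilon M(T)e^{-\alpha T}$; this is harmless here because the lemma only asserts $\delta(\varepsilon,T)\to 0$ for each \emph{fixed} $T$, and the application in Theorem \ref{thm-NHIM-1} indeed fixes $T$ before shrinking $\varepsilon$, but the paper's form would degrade more gracefully if one ever needed uniformity in $T$. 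One cosmetic note in your favor: your variation-of-constants formula correctly keeps the projection $P_{\rm u}(\zeta(\tau))$ inside the integrand, whereas the paper's displayed formula \eqref{formula-Pcs-w} omits the corresponding $P_{\rm cs}(\zeta(\tau))$ (a typo, since the subsequent estimate there implicitly uses it).
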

\begin{proof}
Similarly to the proof of Lemma \ref{lm:est-2},
we can check
\begin{eqnarray*}
P_{\rm u}(\zeta(T-t))'
=-D\mathcal{F}(\zeta(T-t))P_{\rm u}(\zeta(T-t))+P_{\rm u}(\zeta(T-t))D\mathcal{F}(\zeta(T-t)).
\end{eqnarray*}
Then we have
\begin{eqnarray*}
\{P_{\rm u}(\zeta(T-t))\omega(T-t)\}'
\!\!\!&=&\!\!\!-D\mathcal{F}(\zeta(T-t))P_{\rm u}(\zeta(T-t))\omega(T-t)+P_{\rm u}(\zeta(T-t))\Delta_{2}(\xi)(T-t),
\end{eqnarray*}
where
\begin{eqnarray*}
\Delta_{2}(\xi)(T-t):=-\mathcal{F}(\xi(T-t))+\mathcal{F}(\xi_{\rm cs}(T-t))+D\mathcal{F}(\zeta(T-t))\omega(T-t).
\end{eqnarray*}
By  the Variation of Constants Formula,
\begin{eqnarray*}
P_{\rm u}(\zeta(T-t))\omega(T-t)
\!\!\!&=&\!\!\!
\Phi(T-t;\zeta)\Phi^{-1}(T;\zeta)P_{\rm u}(\zeta(T))\omega(T)\\
\!\!\!& &\!\!\!
+\int_{0}^{t}\Phi(T-t;\zeta)\Phi^{-1}(T-\tau;\zeta)\Delta_{2}(\xi)(T-\tau)d\tau.
\end{eqnarray*}
Since $\rho(\xi,\zeta)<\varepsilon$ and $\rho(\xi_{\rm cs},\zeta)<\varepsilon$,
by the Mean Value Theorem and Lemma \ref{lm:est-1},
there exists a constant $M(T)>0$, dependent of $T$, such that
\begin{eqnarray*}
|\Delta_{2}(\xi)(T-t)|\leq \varepsilon M(T)|\omega(T-t)|,\ \ \ \ \ 0\leq t\leq T.
\end{eqnarray*}
This together with the definition of normal hyperbolicity and Lemma \ref{lm:est-2} yields
\begin{eqnarray*}
|P_{\rm u}(\zeta(T-t))\omega(T-t)|
\!\!\!&\leq&\!\!\!
       Ke^{-\beta t}|P_{\rm u}(\zeta(T))\omega(T)|
       +\varepsilon KM(T)\int_{0}^{t}e^{-\beta (t-\tau)}|P_{\rm u}(\zeta(T-\tau))\omega(T-\tau)|d\tau\\
\!\!\!& &\!\!\!
       +\varepsilon KM(T)\int_{0}^{t}e^{-\beta (t-\tau)}|P_{\rm cs}(\zeta(T-\tau))\omega(T-\tau)|d\tau.
\end{eqnarray*}
By the above inequality, Lemmas \ref{lm:prj} and \ref{lm:est-2},
we can choose $M(T)>0$ such that
\begin{eqnarray*}
|P_{\rm u}(\zeta(T-t))\omega(T-t)|e^{\beta t}
\!\!\!&\leq&\!\!\!
       K|P_{\rm u}(\zeta(T))\omega(T)|+\varepsilon M(T)e^{(\beta-\alpha)t}|P_{\rm u}(\zeta)\omega(0)|\\
\!\!\!& &\!\!\!
       +\varepsilon KM(T)\int_{0}^{t}e^{\beta \tau}|P_{\rm u}(\zeta(T-\tau))\omega(T-\tau)|d\tau.
\end{eqnarray*}
Then by the Gronwall inequality, we get
\begin{eqnarray*}
\ \
|P_{\rm u}(\zeta(T-t))\omega(T-t)|e^{\beta t}
\leq  \left\{K|P_{\rm u}(\zeta(T))\omega(T)|+\varepsilon M(T)e^{(\beta-\alpha)t}|P_{\rm u}(\zeta)\omega(0)|\right\}
       \exp(\varepsilon KM(T) t).
\end{eqnarray*}
After setting $t=T$, we can compute
\begin{eqnarray*}
|P_{\rm u}(\zeta(T))\omega(T)|
\geq |P_{\rm u}(\zeta)\omega(0)|e^{\beta T}K^{-1}\left\{e^{-\varepsilon KTM(T)}-\varepsilon M(T)e^{-\alpha T}\right\}.
\end{eqnarray*}
Then the proof is finished by a direct computation.
\end{proof}

Now we state the main results on the compact invariant manifolds $\tilde{\Lambda}^{\pm}$.

\begin{theorem}\label{thm-NHIM-1}
Suppose that assumption {\bf (H3)} holds
and the future limit system \eqref{eq:NDS-future} has a compact invariant set $\Lambda^{+}$,
which is a normally hyperbolic invariant manifold.
Then $\tilde{\Lambda}^{+}=(\Lambda^{+},1)$ is a normally non-hyperbolic invariant manifold of the compactified system \eqref{eq:comp},
its local center-stable manifold $W_{\rm loc}^{\rm cs}(\tilde{\Lambda}^{+})$ is unique,
relative to some neighborhood $\mathcal{V}$ of $\tilde{\Lambda}^{+}$,
and $W^{\rm cs}_{\rm loc}(\tilde{\Lambda}^{+})=w_{\rm loc}^{\rm s}(\tilde{\Lambda}^{+})$
in the extended phase space of the compactified system \eqref{eq:comp}.
\end{theorem}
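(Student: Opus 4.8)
The plan is to establish the three assertions in turn, using Lemmas \ref{prop-NHIM-1}--\ref{lm:est-3} as the building blocks. Existence is already in hand: Lemma \ref{prop-NHIM-1} produces a $C^{k}$ local center-stable manifold $W^{\rm cs}_{\rm loc}(\tilde{\Lambda}^{+})$ with $W^{\rm cs}_{\rm loc}(\tilde{\Lambda}^{+})\subset w^{\rm s}_{\rm loc}(\tilde{\Lambda}^{+})$. Thus the work reduces to (i) recognizing $\tilde{\Lambda}^{+}$ as a normally non-hyperbolic invariant manifold and (ii) proving the reverse inclusion $w^{\rm s}_{\rm loc}(\tilde{\Lambda}^{+})\subset W^{\rm cs}_{\rm loc}(\tilde{\Lambda}^{+})$, which at once yields the equality $W^{\rm cs}_{\rm loc}(\tilde{\Lambda}^{+})=w^{\rm s}_{\rm loc}(\tilde{\Lambda}^{+})$ and the uniqueness.

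For (i) I would read off the splitting \eqref{eq:direct}: the center bundle $\mathcal{E}^{\rm c}_{+}$ is the direct sum of $T\Lambda^{+}$ and the added $s$-direction $(0,\dots,0,1)$, which is normal to $\{s=1\}$ yet carries a zero Lyapunov exponent because $G'(1)=0$, exactly as computed for equilibria in subsection \ref{sec:equiliria}. Hence the normal bundle of $\tilde{\Lambda}^{+}$ contains a neutral direction and $\tilde{\Lambda}^{+}$ is normally non-hyperbolic.

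The core is (ii), and the three tubular-neighborhood estimates are tailored for it. Fix $\xi\in w^{\rm s}_{\rm loc}(\tilde{\Lambda}^{+})$, put $\xi_{\rm cs}=\Pi_{\rm cs}(\xi)$, $\zeta=\mathcal{P}(\xi_{\rm cs})$, and $\omega(t)=\xi(t)-\xi_{\rm cs}(t)$. I argue by contradiction: if $\omega(0)\neq 0$, then uniqueness of solutions gives $\omega(t)\neq 0$ for all $t$, and Lemma \ref{lm:prj} makes $P_{\rm u}(\zeta)\omega(0)$ nonzero and comparable to $|\omega(0)|$. Lemma \ref{lm:est-3} then forces the unstable component $|P_{\rm u}(\zeta(t))\omega(t)|$ to expand essentially like $e^{\beta t}$, whereas Lemma \ref{lm:est-2} confines the center-stable component to grow at most like $e^{\alpha t}$ with $\alpha<\beta$. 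On the other hand, since $\xi\in w^{\rm s}_{\rm loc}(\tilde{\Lambda}^{+})$ its orbit tends to $\tilde{\Lambda}^{+}$, so the coordinate of $\xi(t)$ in the transverse unstable bundle decays, and the same holds for $\xi_{\rm cs}(t)\in W^{\rm cs}_{\rm loc}(\tilde{\Lambda}^{+})$. Consequently $|P_{\rm u}(\zeta(t))\omega(t)|\to 0$, which is incompatible with the exponential expansion unless $P_{\rm u}(\zeta)\omega(0)=0$. Therefore $\omega(0)=0$ and $\xi\in W^{\rm cs}_{\rm loc}(\tilde{\Lambda}^{+})$.

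The main obstacle is that the constants $\tilde{M}(T)$ and $M(T)$ in Lemmas \ref{lm:est-2}--\ref{lm:est-3} grow with the time horizon $T$, so the factor $1+\delta(\varepsilon,T)$ degenerates as $T\to\infty$ for fixed $\varepsilon$ and one cannot read off the contradiction from a single application. I would circumvent this by fixing a window length $T_{0}$ with $K^{-1}e^{\beta T_{0}}$ large, shrinking $\varepsilon$ so that $\delta(\varepsilon,T_{0})$ is negligible over this one fixed window, and then iterating Lemma \ref{lm:est-3} on successive intervals $[t_{0}+nT_{0},t_{0}+(n+1)T_{0}]$, using Lemma \ref{lm:prj} to pass between base-point projections at the junctions; membership in $w^{\rm s}_{\rm loc}(\tilde{\Lambda}^{+})$ keeps the orbit near $\tilde{\Lambda}^{+}$ so that the closeness hypotheses persist at each restart, and the per-window expansion accumulates to a net geometric growth that contradicts the decay of $|P_{\rm u}(\zeta(t))\omega(t)|$. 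Finally, the two inclusions give $W^{\rm cs}_{\rm loc}(\tilde{\Lambda}^{+})=w^{\rm s}_{\rm loc}(\tilde{\Lambda}^{+})$; since $w^{\rm s}_{\rm loc}(\tilde{\Lambda}^{+})$ is intrinsic and every local center-stable manifold lies inside it (its forward orbits converge to $\tilde{\Lambda}^{+}$ through $\dot s=G(s)>0$ together with the stable contraction), the uniqueness follows.
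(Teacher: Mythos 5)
Your overall route is the same as the paper's: existence of $W^{\rm cs}_{\rm loc}(\tilde{\Lambda}^{+})$ comes from Lemma \ref{prop-NHIM-1}, the normal non-hyperbolicity is read off from the splitting \eqref{eq:direct} with $G'(1)=0$, and uniqueness together with $W^{\rm cs}_{\rm loc}(\tilde{\Lambda}^{+})=w^{\rm s}_{\rm loc}(\tilde{\Lambda}^{+})$ is obtained by iterating the expansion estimate of Lemma \ref{lm:est-3} over windows of one fixed length $T$, re-projecting with $\Pi_{\rm cs}$ and $\mathcal{P}$ at each junction (Lemma \ref{lm:prj} together with the Mean Value Theorem and Lemma \ref{lm:est-1}) so that the $T$-dependent constants never accumulate. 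This windowed restart scheme in your last paragraph is exactly the paper's proof, including the role of a per-window expansion factor exceeding $1$.

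There is, however, one step that is wrong as justified: your claim that, because $\xi(t)$ and $\xi_{\rm cs}(t)$ both tend to $\tilde{\Lambda}^{+}$, the unstable component $|P_{\rm u}(\zeta(t))\omega(t)|$ of their difference tends to $0$. Convergence to the \emph{set} $\tilde{\Lambda}^{+}$ does not control the unstable projection of the difference of the two orbits: they may approach far-apart points of $\tilde{\Lambda}^{+}$, and the difference of two points of $\tilde{\Lambda}^{+}$ need not lie anywhere near the center-stable bundle. For instance, if $\Lambda^{+}$ is a circle (hyperbolic periodic orbit) whose unstable normal direction is radial, the difference of two antipodal points is purely radial, so its unstable projection has size comparable to the diameter of $\Lambda^{+}$, with no decay. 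Fortunately your contradiction does not need decay, only boundedness, and boundedness is immediate: whenever $\xi(kT)\in\mathcal{V}$ one has $|\Pi_{\rm u}(\xi(kT))|=\rho\bigl(\xi(kT),W^{\rm cs}_{\rm loc}(\tilde{\Lambda}^{+})\bigr)\leq\rho(\xi(kT),\zeta_{k})<\varepsilon$, while your iteration gives $|\Pi_{\rm u}(\xi(kT))|\geq\theta^{k}|\Pi_{\rm u}(\xi)|\to\infty$ if $\Pi_{\rm u}(\xi)\neq 0$. So the contradiction is with the orbit's remaining in $\mathcal{V}$ for all $t\geq 0$ (which holds by the definition of $w^{\rm s}_{\rm loc}$ relative to $\mathcal{V}$, and is precisely the property defining the points considered in the uniqueness claim); this is how the paper closes the argument. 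Replace your decay claim by this escape-from-$\mathcal{V}$ observation and your proof coincides with the paper's.
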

\begin{proof}
It is clear that $\tilde{\Lambda}^{+}$ is a normally non-hyperbolic invariant manifold of the compactified system \eqref{eq:comp}.
By Lemma \ref{prop-NHIM-1},
we get that $W_{\rm loc}^{\rm cs}(\tilde{\Lambda}^{+})$ is the local center-stable manifold of $\tilde{\Lambda}^{+}$.
Now we prove the uniqueness of $W_{\rm loc}^{\rm cs}(\tilde{\Lambda}^{+})$,
relative to a neighborhood $\mathcal{V}$ of $\tilde{\Lambda}^{+}$,
and $w_{\rm loc}^{\rm s}(\tilde{\Lambda}^{+})\subset W^{\rm cs}_{\rm loc}(\tilde{\Lambda}^{+})$.

By the definition of normal hyperbolicity, we see that $K\geq 1$.
Fix a constant $T$ with $T>\beta^{-1}\ln (2K)$. By Lemma \ref{lm:est-3},
we can choose a small constant $\varepsilon$ with $0<\varepsilon<\varepsilon_{0}$ such that
\begin{eqnarray*}
(2K)^{-1}e^{\beta T}(1+\delta(\varepsilon,T))-\varepsilon M_{2} \exp(2M_{1}T)=:\theta>1,
\end{eqnarray*}
where $M_{2}=\sup_{\xi\in \bar{V}}\|D^{2}\Pi_{u}(\xi)\|$.
Define $\mathcal{V}$ by
\begin{eqnarray*}
\mathcal{V}:=\left\{\xi\in V: \rho(\xi,\zeta)<\varepsilon,\ \rho(\xi_{\rm cs},\zeta)<\varepsilon\right\},
\end{eqnarray*}
where $\xi_{\rm cs}=\Pi_{\rm cs}(\xi)$ and $\zeta=\mathcal{P}(\xi_{\rm cs})$.
Note that $\tilde{\Lambda}^{+}\subset W_{\rm loc}^{\rm cs}(\tilde{\Lambda}^{+})$.
Then we see that
\begin{eqnarray*}
\xi=\Pi_{\rm cs}(\xi)=\xi_{\rm cs},
\ \ \
\xi=\mathcal{P}(\xi)=\zeta,\ \ \ \ \ \xi\in \tilde{\Lambda}^{+}.
\end{eqnarray*}
This yields $\rho(\xi,\zeta)=0$ and $\rho(\xi_{\rm cs},\zeta)=0$.
Hence $\mathcal{V}$ is a neighborhood of $\tilde{\Lambda}^{+}$.

Suppose that $\xi\in \mathcal{V}$ satisfies that $\xi(t)=\Psi_{t}(\xi)\in \mathcal{V}$ for all $t\geq 0$.
Set
\[
\xi_{k}:=\xi(kT),
\ \ \
\xi_{cs,k}:=\Pi_{\rm cs}(\xi_{k}),
\ \ \
\zeta_{k}:=\mathcal{P}(\xi_{cs,k})
\]
for all integers $k\geq 0$. Then
\begin{eqnarray}\label{ineq:ite-k}
\rho(\xi_{k},\zeta_{k})<\varepsilon,
\ \ \
\rho(\xi_{cs,k},\zeta_{k})<\varepsilon,
\ \ \ k\geq 0.
\end{eqnarray}
By the Mean Value Theorem, we have that
\begin{eqnarray*}
\Pi_{\rm u}(\xi_{k})
\!\!\!&=&\!\!\!
\Pi_{\rm u}(\Psi_{T}(\xi_{k-1}))-\Pi_{\rm u}(\Psi_{T}(\xi_{cs,k-1}))\\
\!\!\!&=&\!\!\!
D\Pi_{\rm u}(\tilde{\lambda}(k-1)\Psi_{T}(\xi_{k-1})+(1-\tilde{\lambda}(k-1))\Psi_{T}(\xi_{cs,k-1}))(\Psi_{T}(\xi_{k-1})-\Psi_{T}(\xi_{cs,k-1}))
\end{eqnarray*}
for some $\tilde{\lambda}(k-1)$ with $0\leq \tilde{\lambda}(k-1)\leq 1$ and $k\geq 1$.
This together with \eqref{ineq:ite-k}, the Mean Value Theorem and Lemma \ref{lm:est-1} yields that
\begin{eqnarray*}
|\Pi_{\rm u}(\xi_{k})-P_{u}(\Psi_{T}(\zeta_{k-1}))(\Psi_{T}(\xi_{k-1})-\Psi_{T}(\xi_{cs,k-1}))|
\leq \varepsilon M_{2} \exp(2M_{1}T)|\xi_{k-1}-\xi_{cs,k-1}|
\end{eqnarray*}
for each $k\geq 1$. Then by Lemmas \ref{lm:prj} and \ref{lm:est-3}, we get
\begin{eqnarray*}
|\Pi_{\rm u}(\xi_{k})|
\!\!\!&\geq&\!\!\!
|P_{u}(\Psi_{T}(\zeta_{k-1}))(\Psi_{T}(\xi_{k-1})-\Psi_{T}(\xi_{cs,k-1}))|-\varepsilon M_{2} \exp(2M_{1}T)|\xi_{k-1}-\xi_{cs,k-1}|\\
\!\!\!&\geq&\!\!\!
|P_{u}(\zeta_{k-1})(\xi_{k-1}-\xi_{cs,k-1})|e^{\beta T}K^{-1}(1+\delta(\varepsilon,T))-\varepsilon M_{2} \exp(2M_{1}T)|\xi_{k-1}-\xi_{cs,k-1}|\\
\!\!\!&=&\!\!\!
\{(2K)^{-1}e^{\beta T}(1+\delta(\varepsilon,T))-\varepsilon M_{2} \exp(2M_{1}T)\}|\xi_{k-1}-\xi_{cs,k-1}|\\
\!\!\!&=&\!\!\!  \theta |\xi_{k-1}-\xi_{cs,k-1}|.
\end{eqnarray*}
Noting that $\Pi_{\rm u}(\xi_{k-1})=\xi_{k-1}-\xi_{cs,k-1}$, we have
\begin{eqnarray*}
|\Pi_{\rm u}(\xi_{k})|\geq \theta |\Pi_{\rm u}(\xi_{k-1})|,\ \ \ \ \ k\geq 1.
\end{eqnarray*}
It follows that
\begin{eqnarray}\label{ineq:iterat}
|\Pi_{\rm u}(\xi_{k})|\geq \theta^{k} |\Pi_{\rm u}(\xi)|,
\ \ \ \ \ k\geq 1.
\end{eqnarray}
If $|\Pi_{\rm u}(\xi)|\neq 0$, then by \eqref{ineq:iterat},
then there exists an integer $k_{0}\geq 1$ such that $|\Pi_{\rm u}(\xi(kT))|>\varepsilon$.
This contradicts the assumption that $\xi(t)=\Psi_{t}(\xi)\in \mathcal{V}$ for all $t\geq 0$.
Thus, $|\Pi_{\rm u}(\xi)|=0$ if $\xi(t)=\Psi_{t}(\xi)\in \mathcal{V}$ for all $t\geq 0$.
This implies the uniqueness of $W_{\rm loc}^{\rm cs}(\tilde{\Lambda}^{+})$
and $w_{\rm loc}^{\rm s}(\tilde{\Lambda}^{+})\subset W^{\rm cs}_{\rm loc}(\tilde{\Lambda}^{+})$.
Therefore, the proof is now complete.
\end{proof}

\begin{remark}
The similar argument in the proof of Theorem \ref{thm-NHIM-1}
can be also used to study the uniqueness of the perturbed overflowing invariant manifolds as in Lemma \ref{thm-Fenichel-1}
and the stable or unstable manifolds of slow manifolds in singularly perturbed systems \cite{Fenichel-79}.
\end{remark}

\begin{theorem}\label{thm-NHIM-2}
Suppose that assumption {\bf (H3)} holds
and the past limit system \eqref{eq:NDS-past} has a compact invariant set $\Lambda^{-}$,
which is a normally stable invariant manifold.
Then $\tilde{\Lambda}^{-}=(\Lambda^{-},1)$ is a normally non-hyperbolic invariant manifold of the compactified system \eqref{eq:comp},
its local center manifold $W_{\rm loc}^{\rm c}(\tilde{\Lambda}^{-})$ is unique,
relative to some neighborhood $\mathcal{V}$  of $\tilde{\Lambda}^{-}$,
and $W^{\rm c}_{\rm loc}(\tilde{\Lambda}^{+})=w_{\rm loc}^{\rm u}(\tilde{\Lambda}^{-})$
in the extended phase space of the compactified system \eqref{eq:comp}.
\end{theorem}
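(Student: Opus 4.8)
The plan is to reduce Theorem \ref{thm-NHIM-2} to Theorem \ref{thm-NHIM-1} by reversing time and reflecting the compactifying coordinate, just as Theorem \ref{thm-equilbria-2} was deduced from Theorem \ref{thm-equilbria-1}. First I would introduce the change of variables $\tau=-t$ and $\tilde s=-s$, under which the compactified system \eqref{eq:comp} becomes
\[
\frac{dx}{d\tau}=-F(x,-\tilde s),\qquad \frac{d\tilde s}{d\tau}=G(-\tilde s),
\]
an autonomous system on $U\times[-1,1]$ of the same form. Here $\tilde s=1$ plays the role of the future boundary: writing $\hat G(\tilde s):=G(-\tilde s)$, the properties $G(-1)=G'(-1)=0$ and $G>0$ on $(-1,1)$ translate into $\hat G(1)=\hat G'(1)=0$ and $\hat G>0$ on $(-1,1)$, so the $\tilde s$-flow pushes trajectories toward $\tilde s=1$ at a degenerate (center) rate. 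Setting $\tilde\phi(\tau):=-\phi(-\tau)$, I would verify that $\tilde\phi$ inherits assumption {\bf (H1)} and that $\dot{\tilde\phi}/\tilde\phi\to 0$, so the transformed system again satisfies {\bf (H3)}; its future limit system is $dx/d\tau=-f(x,\mu^{-})$.

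Next I would pin down the hyperbolicity type of $\Lambda^{-}$ for this new future limit system. Since $\Lambda^{-}$ is normally stable for $\dot x=f(x,\mu^{-})$, its normal bundle is entirely contracting, and reversing time turns every normal direction into an expanding one; hence $\Lambda^{-}$ is normally hyperbolic for $dx/d\tau=-f(x,\mu^{-})$ with \emph{trivial stable bundle}. Theorem \ref{thm-NHIM-1} then applies to the transformed compactified system and gives a unique local center-stable manifold of $\tilde\Lambda^{-}$ (now located at $\tilde s=1$), coinciding with the local forward-time stable set there. Because the stable bundle is trivial, the bundle $\mathcal{E}^{\rm s}\oplus\mathcal{E}^{\rm c}$ reduces to the center bundle $T\Lambda^{-}\oplus\{$the $s$-direction$\}$, so this center-stable manifold is in fact the center manifold. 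I would record that the construction in Lemma \ref{prop-NHIM-1} degenerates gracefully in this case: the overflowing manifold is built from $W^{\rm s}_{\rm loc}(\Lambda^{-})=\Lambda^{-}$ itself, whose complementary normal directions are exactly the expanding ones, so the generalized Lyapunov-type numbers still satisfy $\alpha(p)<1$ and $\sigma(p)<1$ and the existence and uniqueness arguments of Lemmas \ref{lm:est-2}, \ref{lm:est-3} and Theorem \ref{thm-NHIM-1} go through verbatim.

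Finally I would translate the conclusion back through $\tilde s=-s$, $\tau=-t$. The reflection $\tilde s=-s$ is a diffeomorphism fixing $\tilde\Lambda^{-}=(\Lambda^{-},-1)$ and carrying center manifolds to center manifolds, so uniqueness and tangency to $\mathcal{E}^{\rm c}_{-}$ persist, yielding a unique local center manifold $W^{\rm c}_{\rm loc}(\tilde\Lambda^{-})$ of \eqref{eq:comp}. Time reversal identifies the forward-time local stable set of the transformed system with the backward-time local unstable set of the original one — a point has $\omega$-limit in $\tilde\Lambda^{-}$ as $\tau\to+\infty$ precisely when its $\alpha$-limit lies in $\tilde\Lambda^{-}$ as $t\to-\infty$ — so $W^{\rm c}_{\rm loc}(\tilde\Lambda^{-})=w^{\rm u}_{\rm loc}(\tilde\Lambda^{-})$, the asserted identity. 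That $\tilde\Lambda^{-}$ is normally non-hyperbolic is immediate, since $G'(-1)=0$ contributes a single neutral normal direction along the $s$-axis.

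I expect the main obstacle to be the careful bookkeeping of the hyperbolicity data under the combined reflection-and-time-reversal: verifying that the trivial-stable-bundle instance of Theorem \ref{thm-NHIM-1} genuinely delivers the center manifold (and not a larger invariant object), and that the local stable and unstable sets correspond correctly after the two sign changes. The transfer of the smoothness and compactification hypotheses {\bf (H1)}--{\bf (H3)} to $\tilde\phi$ and $\tilde\mu(\tau):=\mu(-\tau)$ requires only routine checks.
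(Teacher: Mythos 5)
Your proposal is correct and follows essentially the same route as the paper: the paper's proof likewise reverses time, applies Theorem \ref{thm-NHIM-1} to the reversed compactified system to get $W^{\rm cu}_{\rm loc}(\tilde{\Lambda}^{-})=w^{\rm u}_{\rm loc}(\tilde{\Lambda}^{-})$, and then uses normal stability (no unstable normal directions) to conclude $W^{\rm c}_{\rm loc}(\tilde{\Lambda}^{-})=W^{\rm cu}_{\rm loc}(\tilde{\Lambda}^{-})$. Your extra bookkeeping --- the reflection $\tilde s=-s$, the transfer of {\bf (H1)}/{\bf (H3)} to $\tilde\phi$, and the check that the trivial-stable-bundle instance of Lemma \ref{prop-NHIM-1} degenerates gracefully --- simply makes explicit what the paper leaves implicit.
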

\begin{proof}
It is clear that $\tilde{\Lambda}^{-}$ is a normally non-hyperbolic invariant manifold of the compactified system \eqref{eq:comp}.
Consider the compactified system \eqref{eq:comp} with time reversed.
Then by Theorem \ref{thm-NHIM-1}, we can obtain $W^{cu}_{\rm loc}(\tilde{\Lambda}^{-})=w_{\rm loc}^{\rm u}(\tilde{\Lambda}^{-})$.
Note that $\Lambda^{-}$ is a normally stable invariant manifold of system \eqref{eq:NDS-past}.
Then $\tilde{\Lambda}^{-}$ has no unstable normal directions.
Hence we further have $W^{\rm c}_{\rm loc}(\tilde{\Lambda}^{-})=W^{cu}_{\rm loc}(\tilde{\Lambda}^{-})=w_{\rm loc}^{\rm u}(\tilde{\Lambda}^{-})$.
This finishes the proof.
\end{proof}

\subsection{Invariant manifolds for admissible sets}
\label{sec:Adm}

In this subsection,
we consider the invariant manifolds for a class of general compact invariant sets,
which are called admissible sets \cite{Chow-Liu-Yi-00}.

Now we introduce the precise definition of admissible sets.
The connected compact invariant sets $\Lambda^{\pm}$ of the limit systems \eqref{eq:NDS-future} and \eqref{eq:NDS-past}
are said to be admissible sets if there exist constants $K_{i}^{\pm}>0$ ($i=1,2,3$) such that
for each $\varepsilon>0$, there are smooth neighborhoods $U^{\pm}$ of the invariant sets $\Lambda^{\pm}$
on which the following three conditions hold for all $x\in \partial U^{\pm}$:
\begin{enumerate}
\item[{\rm (i')}] $K^{\pm}_{1} \varepsilon\leq \rho(x,\Lambda^{\pm})\leq \varepsilon$.

\item[{\rm (ii')}] $|(f(x,\mu^{\pm}),v(x))|\leq K^{\pm}_{2}\varepsilon$,
where $(\cdot,\cdot)$ denotes the inner product of two vectors
and $v(x)$ denotes a unit normal vector to $\partial U^{\pm}$ at $x$.

\item[{\rm (iii')}] In a neighborhood $U_{x}$ of $x$,
if $\partial U^{\pm}\cap U_{x}$ is the graph of a function
$h:T_{x}\partial U^{\pm}\to T^{\bot}_{x}\partial U^{\pm}$,
then $|\xi^{*}{\rm Hess}(h)(x)\xi|\leq K_{3}^{\pm}\varepsilon^{-1}$ for each unit vector $\xi\in T_{x}\partial U^{\pm}$.
\end{enumerate}
The admissible sets include a large class of compact invariant sets, such as invariant manifolds, homoclinic or heteroclinic loops, etc.

Let $\Lambda^{\pm}$ be the admissible sets of the limit systems \eqref{eq:NDS-future} and \eqref{eq:NDS-past}.
To construct the local center-stable manifolds of $\tilde{\Lambda}^{\pm}:=(\Lambda^{\pm},\pm 1)$,
we also require an additional topological property for the admissible sets $\Lambda^{\pm}$.
For a fixed $x\in \Lambda^{\pm}$,
a subspace  $E(x)$ of $T_{x}\mathbb{R}^{N}$ is called a generalized tangent space of $\Lambda^{\pm}$ at $x$
if for $x_{1}$ and $x_{2}$ in $\Lambda^{\pm}$ with $x_{1}\neq x_{2}$,
\begin{eqnarray*}
\rho\left(\frac{x_{1}-x_{2}}{|x_{1}-x_{2}|},E(x)\right) \to 0\ \ \ \mbox{ as }\ \ (x_{1},x_{2})\to (x,x).
\end{eqnarray*}
Two continuous subbundles $E(\Lambda^{\pm}):=\bigcup\{E(x)\subset T_{x}\mathbb{R}^{N} \mbox { for } x\in \Lambda^{\pm}\}$ of the tangent bundle $T_{\Lambda^{\pm}}\mathbb{R}^{N}$ of $\mathbb{R}^{N}$
over $\Lambda^{\pm}$ are called generalized tangent bundles of $\Lambda^{\pm}$
if $E(x)$ is a generalized tangent space of $\Lambda^{\pm}$ at each $x\in \Lambda^{\pm}$.

Concerning the admissible sets, we have the following two theorems.

\begin{theorem}\label{thm-admset-1}
Let $\Lambda^{+}$ be a normally hyperbolic admissible set of the future limit system \eqref{eq:NDS-future}.
Suppose that assumption {\bf (H3)} holds and the center subbundle $T\Lambda^{+}$ is a generalized tangent bundle of $\Lambda^{+}$.
Then $\tilde{\Lambda}^{+}=(\Lambda^{+},1)$ is a normally non-hyperbolic admissible set of the compactified system \eqref{eq:comp},
its local center-stable manifold $W_{\rm loc}^{\rm cs}(\tilde{\Lambda}^{+})$ is unique,
relative to some neighborhood $\mathcal{V}$ of $\tilde{\Lambda}^{+}$,
and $W^{\rm cs}_{\rm loc}(\tilde{\Lambda}^{+})=w_{\rm loc}^{\rm s}(\tilde{\Lambda}^{+})$
in the extended phase space of the compactified system \eqref{eq:comp}.
\end{theorem}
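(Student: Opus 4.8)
The plan is to prove Theorem \ref{thm-admset-1} in two stages, existence and uniqueness, running in parallel with the proof of Theorem \ref{thm-NHIM-1}. First I would dispose of the structural facts. As with equilibria and submanifolds, condition {\bf (H3)} forces $G'(1)=0$, so the Jacobian of the compactified system \eqref{eq:comp} at $\tilde{\Lambda}^{+}=(\Lambda^{+},1)$ acquires exactly one center direction along $(0,\dots,0,1)$; hence $\tilde{\Lambda}^{+}$ is normally non-hyperbolic and its tangent bundle splits as in \eqref{eq:direct}, with $\mathcal{E}^{\rm c}_{+}$ the sum of the generalized tangent directions of $\Lambda^{+}$ and the added $s$-direction. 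Because $\Lambda^{+}$ satisfies the admissibility conditions (i')--(iii') for the future limit system \eqref{eq:NDS-future} and the $s$-factor is flat near $s=1$, one verifies directly that $\tilde{\Lambda}^{+}$ satisfies (i')--(iii') for the compactified system \eqref{eq:comp}, i.e. $\tilde{\Lambda}^{+}$ is a normally non-hyperbolic admissible set.

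For existence I cannot imitate Lemma \ref{prop-NHIM-1} literally, since an admissible set need not possess a tubular neighborhood and Fenichel's theorem (Lemma \ref{thm-Fenichel-1}) does not apply. Instead I would keep the reflection device of \eqref{eq:mod-1}--\eqref{eq:mod-3}, which turns the center-stable problem at the boundary $s=1$ into a persistence problem for an interior invariant set, and replace Fenichel's persistence by the center-manifold and invariant-manifold theory for admissible sets of Chow, Liu and Yi \cite{Chow-Liu-Yi-00}. Concretely, $\Lambda^{+}$ is a normally hyperbolic admissible set of \eqref{eq:NDS-future}, so by \cite{Chow-Liu-Yi-00} it has a $C^{k}$ local stable manifold $W^{\rm s}_{\rm loc}(\Lambda^{+})$; feeding this together with the flat $s$-dynamics into the persistence result of \cite{Chow-Liu-Yi-00} produces a $C^{k}$ local center-stable manifold $W^{\rm cs}_{\rm loc}(\tilde{\Lambda}^{+})$ that contains $\tilde{\Lambda}^{+}$, lies in $w_{\rm loc}^{\rm s}(\tilde{\Lambda}^{+})$, and is tangent to $\mathcal{E}^{\rm s}_{+}\oplus\mathcal{E}^{\rm c}_{+}$ along $\tilde{\Lambda}^{+}$. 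The hypothesis that $T\Lambda^{+}$ is a generalized tangent bundle is exactly what gives this tangency a meaning and keeps the splitting \eqref{eq:direct} continuous.

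For uniqueness I would transplant the iteration scheme of Theorem \ref{thm-NHIM-1} almost verbatim, and here the decisive point is that the object produced by the existence step, $W^{\rm cs}_{\rm loc}(\tilde{\Lambda}^{+})$, is a genuine $C^{k}$ submanifold even though $\Lambda^{+}$ is not. Consequently the Tubular Neighborhood and Whitney Embedding constructions defining the $C^{k}$ projections $\Pi_{\rm cs}$ and $\Pi_{\rm u}=\mathrm{id}-\Pi_{\rm cs}$ are carried out on $W^{\rm cs}_{\rm loc}(\tilde{\Lambda}^{+})$ itself, while $\mathcal{P}\colon W^{\rm cs}_{\rm loc}(\tilde{\Lambda}^{+})\to\Lambda^{+}$ needs only compactness of $\Lambda^{+}$ to be well defined. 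The generalized tangent bundle hypothesis enters precisely in recovering $T_{\zeta}W^{\rm cs}_{\rm loc}(\tilde{\Lambda}^{+})=\mathcal{E}^{\rm s}_{+}(\zeta)\oplus\mathcal{E}^{\rm c}_{+}(\zeta)$ for $\zeta\in\tilde{\Lambda}^{+}$, hence in the derivative identities $D\Pi_{\rm cs}(\zeta)=P_{\rm cs}(\zeta)$ and $D\Pi_{\rm u}(\zeta)=P_{\rm u}(\zeta)$, which are the only place the smoothness of the invariant set was used in the submanifold case. Once these identities hold, Lemma \ref{lm:est-1} (Gronwall, needing only the $C^{1}$ field), Lemma \ref{lm:prj} (the projection estimates), and Lemmas \ref{lm:est-2}--\ref{lm:est-3} (the center-stable decay and unstable expansion estimates, using only the normal hyperbolicity constants $K,\alpha,\beta$) apply unchanged. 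Fixing $T>\beta^{-1}\ln(2K)$ and $\varepsilon$ small so that $\theta:=(2K)^{-1}e^{\beta T}(1+\delta(\varepsilon,T))-\varepsilon M_{2}\exp(2M_{1}T)>1$, the sampled sequence $\xi_{k}=\Psi_{kT}(\xi)$ obeys $|\Pi_{\rm u}(\xi_{k})|\ge\theta^{k}|\Pi_{\rm u}(\xi)|$, forcing $\Pi_{\rm u}(\xi)=0$ for any orbit that stays in $\mathcal{V}$ for all $t\ge0$; this yields $w_{\rm loc}^{\rm s}(\tilde{\Lambda}^{+})\subset W^{\rm cs}_{\rm loc}(\tilde{\Lambda}^{+})$ and therefore both the uniqueness of $W^{\rm cs}_{\rm loc}(\tilde{\Lambda}^{+})$ relative to $\mathcal{V}$ and the identity $W^{\rm cs}_{\rm loc}(\tilde{\Lambda}^{+})=w_{\rm loc}^{\rm s}(\tilde{\Lambda}^{+})$.

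The hard part, and the single place that truly consumes the extra hypothesis, is the transition from the smooth ambient projections onto $W^{\rm cs}_{\rm loc}(\tilde{\Lambda}^{+})$ to the dynamically defined bundle projections at points of the non-smooth set $\Lambda^{+}$: I expect the delicate estimates to be in showing that the generalized tangent bundle $T\Lambda^{+}$ of $\Lambda^{+}$ propagates to honest tangency of $W^{\rm cs}_{\rm loc}(\tilde{\Lambda}^{+})$ to $\mathcal{E}^{\rm s}_{+}\oplus\mathcal{E}^{\rm c}_{+}$ along $\tilde{\Lambda}^{+}$, so that $D\Pi_{\rm u}(\zeta)=P_{\rm u}(\zeta)$ for $\zeta\in\Lambda^{+}$. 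The second, more routine, check is verifying the admissibility conditions (i')--(iii') for $\tilde{\Lambda}^{+}$ so that the machinery of \cite{Chow-Liu-Yi-00} is applicable; downstream of these two verifications the argument repeats the submanifold case line by line.
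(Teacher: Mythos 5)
Your proposal has the same skeleton as the paper's proof --- existence from the admissible-set theory of Chow, Liu and Yi \cite{Chow-Liu-Yi-00}, uniqueness by transplanting the iteration of Theorem \ref{thm-NHIM-1} --- and your uniqueness part is exactly what the paper compresses into ``similarly to Theorem \ref{thm-NHIM-1}'': you correctly isolate the key point that the tubular neighborhood, $\Pi_{\rm cs}$ and $\Pi_{\rm u}$ are built on the smooth manifold $W^{\rm cs}_{\rm loc}(\tilde{\Lambda}^{+})$ produced by the existence step, not on the (possibly non-smooth) set $\Lambda^{+}$, so that Lemmas \ref{lm:est-1}--\ref{lm:est-3} and the expansion estimate $|\Pi_{\rm u}(\xi_{k})|\geq\theta^{k}|\Pi_{\rm u}(\xi)|$ apply verbatim.

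The gap is in your existence step. You propose to take the local stable manifold of $\Lambda^{+}$ for the limit system and feed it, ``together with the flat $s$-dynamics, into the persistence result of \cite{Chow-Liu-Yi-00}.'' There is no such result to invoke: \cite{Chow-Liu-Yi-00} is an existence theory for locally invariant manifolds of a compact invariant set of a \emph{given} flow, not a Fenichel-type perturbation theory (that role is played by Lemma \ref{thm-Fenichel-1}, which, as you note, is unavailable here precisely because $\Lambda^{+}$ need not be a submanifold). Nor can the persistence be sidestepped by taking a product such as $W^{\rm s}_{\rm loc}(\Lambda^{+})\times\{|s-1|\leq\epsilon\}$, since $F(x,s)$ depends on $s$ away from $s=1$ and such a product is not invariant for the compactified system. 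The paper's argument avoids persistence altogether: because $T\Lambda^{+}$ is a generalized tangent bundle of $\Lambda^{+}$ and the added center direction $(0,\dots,0,1)$ is constant, the bundle $\mathcal{E}^{\rm c}_{+}$ is a generalized tangent bundle of $\tilde{\Lambda}^{+}$ itself; then \cite[Proposition 1]{Chow-Liu-Yi-00}, applied directly to $\tilde{\Lambda}^{+}$ in the extended phase space, gives a $C^{k}$ (non-invariant) manifold containing $\tilde{\Lambda}^{+}$ and tangent to $\mathcal{E}^{\rm s}_{+}\oplus\mathcal{E}^{\rm c}_{+}$ along it; one then constructs a bundle structure over this manifold and applies \cite[Proposition 2]{Chow-Liu-Yi-00} to obtain the locally invariant center-stable manifold $W^{\rm cs}_{\rm loc}(\tilde{\Lambda}^{+})$ for the compactified system itself. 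Your reflection device is harmless (and implicitly needed so that the flow is defined on a full neighborhood of $\{s=1\}$), but the engine of the existence proof must be this direct two-step application of \cite{Chow-Liu-Yi-00} to $\tilde{\Lambda}^{+}$, not a persistence argument; once that is corrected, the rest of your plan coincides with the paper's proof.
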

\begin{proof}
It is clear that $\tilde{\Lambda}^{+}$ is a normally non-hyperbolic admissible set of the compactified system \eqref{eq:comp}.
Since the center subbundle $T\Lambda^{+}$ is a generalized tangent bundle of $\Lambda^{+}$,
the center subbundle $\mathcal{E}^{\rm c}_{+}$ is also a generalized tangent bundle of $\tilde{\Lambda}^{+}$.
Then by  \cite[Proposition 1]{Chow-Liu-Yi-00},
there exists a $C^{k}$ manifold $\hat{W}_{\rm loc}^{\rm cs}(\tilde{\Lambda}^{+})$
such that $\tilde{\Lambda}^{+} \subset \hat{W}_{\rm loc}^{\rm cs}(\tilde{\Lambda}^{+})$ and
$T_{\xi}\hat{W}_{\rm loc}^{\rm cs}(\tilde{\Lambda}^{+})=\mathcal{E}^{\rm s}_{+}(\xi)\oplus \mathcal{E}^{\rm c}_{+}(\xi)$
for each $\xi\in \tilde{\Lambda}^{+}$.
Then we can construct a bundle structure in a neighborhood of $\tilde{\Lambda}^{+}$.
By applying \cite[Proposition 2]{Chow-Liu-Yi-00},
the admissible set $\tilde{\Lambda}^{+}$ has a local center-stable manifold $W_{\rm loc}^{\rm cs}(\tilde{\Lambda}^{+})$,
which is $C^{k}$ and $T_{\xi}W_{\rm loc}^{\rm cs}(\tilde{\Lambda}^{+})=\mathcal{E}^{\rm s}_{+}(\xi)\oplus \mathcal{E}^{\rm c}_{+}(\xi)$
for each $\xi\in \Lambda^{+}$.
Similarly to Theorem \ref{thm-NHIM-1},
we can prove the uniqueness of the local center-stable manifold  $W_{\rm loc}^{\rm cs}(\tilde{\Lambda}^{+})$
and $W^{\rm cs}_{\rm loc}(\tilde{\Lambda}^{+})=w_{\rm loc}^{\rm s}(\tilde{\Lambda}^{+})$.
This finishes the proof.
\end{proof}

Consider the compactified system \eqref{eq:comp} with time reversed.
Then by Theorem \ref{thm-admset-1}, we can prove the following statements.

\begin{theorem}\label{thm-admset-2}
Let $\Lambda^{-}$ be a normally stable admissible set of the past limit system \eqref{eq:NDS-past}.
Suppose that assumption {\bf (H3)} holds and the center subbundle $T\Lambda^{-}$ is a generalized tangent bundle of $\Lambda^{-}$.
Then $\tilde{\Lambda}^{-}=(\Lambda^{-},1)$ is a normally non-hyperbolic admissible set of the compactified system \eqref{eq:comp},
its local center manifold $W_{\rm loc}^{\rm c}(\tilde{\Lambda}^{-})$ is unique,
relative to some neighborhood $\mathcal{V}$  of $\tilde{\Lambda}^{-}$,
and $W^{\rm c}_{\rm loc}(\tilde{\Lambda}^{+})=w_{\rm loc}^{\rm u}(\tilde{\Lambda}^{-})$
in the extended phase space of the compactified system \eqref{eq:comp}.
\end{theorem}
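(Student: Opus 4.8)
The plan is to deduce Theorem~\ref{thm-admset-2} from Theorem~\ref{thm-admset-1} by reversing time in the compactified system~\eqref{eq:comp}, mirroring the way Theorem~\ref{thm-NHIM-2} is obtained from Theorem~\ref{thm-NHIM-1}. The underlying mechanism is that time reversal interchanges stable with unstable objects and the two ends $\{s=\pm1\}$, so the center-unstable manifold of $\tilde\Lambda^-$ for~\eqref{eq:comp} is nothing but the center-stable manifold of $\tilde\Lambda^-$ for the reversed flow, to which Theorem~\ref{thm-admset-1} should apply.

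First I would record that $\tilde\Lambda^-=(\Lambda^-,-1)$ is a normally non-hyperbolic admissible set of~\eqref{eq:comp}. Indeed, the compactification adjoins the single direction normal to $\{s=-1\}$, on which $G'(-1)=0$ by the computation in subsection~\ref{sec:equiliria}; hence the splitting~\eqref{eq:direct} holds at $\tilde\Lambda^-$ with $\mathcal{E}^{\rm c}_-$ the direct sum of $T\Lambda^-$ and this added center line, and the normal hyperbolicity degenerates to normal non-hyperbolicity. The admissibility conditions (i')--(iii') for $\tilde\Lambda^-$ follow from those for $\Lambda^-$: (i') and (iii') are purely geometric, while (ii') involves only $|(f(x,\mu^-),v(x))|$, which is insensitive to embedding the set in the extended phase space and to flipping the sign of the field.

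Next I would pass to~\eqref{eq:comp} with time reversed, $\dot X=-\mathcal F(X)$. Since $G(s)>0$ on $(-1,1)$, the reversed equation $\dot s=-G(s)$ drives orbits onto $\{s=-1\}$ as reversed time tends to $+\infty$, so $\{s=-1\}$ now plays the role that $\{s=+1\}$ plays in~\eqref{eq:comp}; after the harmless relabeling $s\mapsto -s$ the reversed flow is again a compactified system of the form treated in Theorem~\ref{thm-admset-1}, with future-limit field $-f(x,\mu^-)$. I would then check the three hypotheses of Theorem~\ref{thm-admset-1} at $\tilde\Lambda^-$ for this reversed system: assumption {\bf (H3)} is symmetric under $t\to\pm\infty$ and the $C^k$-smoothness established in Theorem~\ref{thm-transf-cond} is preserved by time reversal; the generalized-tangent-bundle property of $T\Lambda^-$ is a topological feature of the set, so it persists and makes $\mathcal E^{\rm c}_-$ a generalized tangent bundle of $\tilde\Lambda^-$; and a set that is normally stable for the past limit system~\eqref{eq:NDS-past} becomes normally unstable, hence normally hyperbolic, for its time reversal. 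Theorem~\ref{thm-admset-1} then produces a unique local center-stable manifold of $\tilde\Lambda^-$ for $-\mathcal F$ coinciding with the local stable set; reading this back in the original time gives a unique $W^{\rm cu}_{\rm loc}(\tilde\Lambda^-)$ with $W^{\rm cu}_{\rm loc}(\tilde\Lambda^-)=w^{\rm u}_{\rm loc}(\tilde\Lambda^-)$.

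Finally I would invoke the normal stability of $\Lambda^-$ a second time: it forces the unstable normal bundle $\mathcal E^{\rm u}_-$ to vanish, so the bundle $\mathcal E^{\rm c}_-\oplus\mathcal E^{\rm u}_-$ to which a center-unstable manifold is tangent reduces to $\mathcal E^{\rm c}_-$, and therefore $W^{\rm cu}_{\rm loc}(\tilde\Lambda^-)=W^{\rm c}_{\rm loc}(\tilde\Lambda^-)$. This yields the uniqueness of $W^{\rm c}_{\rm loc}(\tilde\Lambda^-)$ relative to a neighborhood $\mathcal V$ and the identity $W^{\rm c}_{\rm loc}(\tilde\Lambda^-)=w^{\rm u}_{\rm loc}(\tilde\Lambda^-)$. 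The main point requiring care is not the time-reversal bookkeeping but verifying that admissibility and the generalized-tangent-bundle hypothesis genuinely survive the sign change and relabeling with the same constants $K_i^-$, and that the uniqueness mechanism of Theorem~\ref{thm-admset-1} --- ultimately the iteration estimate behind Lemma~\ref{lm:est-3} --- applies verbatim once the stable and unstable roles are exchanged; the hyperbolicity reversal and the collapse $\mathcal E^{\rm u}_-=0$ are routine.
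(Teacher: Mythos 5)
Your proposal takes exactly the paper's route: the paper proves Theorem \ref{thm-admset-2} by considering the compactified system \eqref{eq:comp} with time reversed and invoking Theorem \ref{thm-admset-1}, then using normal stability of $\Lambda^{-}$ (vanishing unstable normal bundle) to identify the resulting center-unstable manifold with the center manifold, precisely as in the proof of Theorem \ref{thm-NHIM-2}. Your extra verifications---that admissibility, assumption {\bf (H3)}, and the generalized-tangent-bundle property survive time reversal, and that a normally stable set becomes normally hyperbolic for the reversed flow---are correct and simply make explicit the details the paper leaves implicit.
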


\begin{remark}
We have proved the uniqueness of the local center-stable manifold $W_{\rm loc}^{\rm cs}(\tilde{\Lambda}^{+})$
for a normally hyperbolic invariant manifold or admissible set $\Lambda^{+}$ of the future limit system \eqref{eq:NDS-future}
when embedded in the extended phase space,
and the uniqueness of the local center manifold $W_{\rm loc}^{\rm c}(\tilde{\Lambda}^{-})$
for a normally stable invariant manifold or admissible set $\Lambda^{-}$ of the past limit system \eqref{eq:NDS-past}
when embedded in the extended phase space.
We also believe that for more general invariant sets $\Lambda^{\pm}$,
the uniqueness of the corresponding manifold $W_{\rm loc}^{\rm cs}(\tilde{\Lambda}^{+})$ or $W_{\rm loc}^{\rm c}(\tilde{\Lambda}^{-})$
can be treat by small modifications of the present argument.
\end{remark}

\section{Example and remark}
\label{sec:example}

In order to demonstrate the  theory developed in this paper,
we consider a mechanical model
\begin{eqnarray}\label{app-ode}
\ddot u+r(t,u)\dot u+u=0
\end{eqnarray}
with a time-dependent resistance
\[
r(t,u)=-\left(\frac{2}{\pi}\arctan t\right)^{2}(1-u^{2}).
\]
This mechanical model can be recast as a system
\begin{eqnarray}\label{app-sys-1}
\begin{aligned}
\dot u &= v,\\
\dot v &=(\mu(t))^{2}(1-u^{2})v-u,
\end{aligned}
\end{eqnarray}
where
\begin{eqnarray*}
\mu(t)=\frac{2}{\pi}\arctan t, \ \ \ \ \ t\in\mathbb{R}.
\end{eqnarray*}
It is clear that the nonautonomous term $\mu$ satisfies the limits:
\begin{eqnarray*}
\lim_{t\to +\infty}\mu(t)=1, \ \ \ \ \
\lim_{t\to -\infty}\mu(t)=-1.
\end{eqnarray*}
Then $\mu$ is bi-asymptotically constant with future limit $1$ and past limit $-1$.
Letting $t\to \pm \infty$ yields that the autonomous future and past limit systems of system \eqref{app-sys-1} are given by
\begin{eqnarray}\label{app-sys-limit}
\begin{aligned}
\dot u &= v,\\
\dot v &=(1-u^{2})v-u.
\end{aligned}
\end{eqnarray}

To realize the compactification of system \eqref{app-sys-1}, we set $\phi(t):=\mu(t)=\frac{2}{\pi}\arctan t$.
We can check that
\begin{eqnarray*}
\dot \phi(t)=\frac{2}{\pi}\cdot \frac{1}{t^{2}+1}>0 \mbox{ for } t\in\mathbb{R},\ \ \ \
\lim_{t\to \pm \infty}\dot \phi(t)=\lim_{t\to \pm \infty}\frac{2}{\pi}\cdot \frac{1}{t^{2}+1}=0.
\end{eqnarray*}
This yields that  $\phi$ satisfies assumption {\bf (H1)}.
Set $s:=\phi(t)$. Then $h(s)=\tan (\frac{\pi}{2}s)$.
We further have that the corresponding compactified system is given by
\begin{eqnarray}\label{app-sys-comp-1}
\begin{aligned}
\dot u &= v,\\
\dot v &=s^{2}(1-u^{2})v-u,\\
\dot s &=\frac{2}{\pi}\cos^{2}(\frac{\pi}{2}s)
\end{aligned}
\end{eqnarray}
for $(u,v,s)\in\mathbb{R}\times \mathbb{R}\times (-1,1)$, and
\begin{eqnarray}\label{app-sys-comp-2}
\begin{aligned}
\dot u &= v,\\
\dot v &=(1-u^{2})v-u,\\
\dot s &=0
\end{aligned}
\end{eqnarray}
for $(u,v,s)\in\mathbb{R}\times \mathbb{R}\times \{\pm 1\}$.
Note that
\begin{eqnarray*}
\lim_{s\to \pm 1^{\mp}}\left(-\frac{h''(s)}{(h'(s))^{2}}\right)
\!\!\!&=&\!\!\!
\lim_{t\to \pm \infty} \frac{\ddot{\phi}(t)}{\dot{\phi}(t)}
=
\lim_{t\to \pm \infty}\left(-\frac{2t}{t^{2}+1}\right)=0,\\
\lim_{s\to \pm 1^{\mp}}\frac{d}{ds}\mu(h(s))
\!\!\!&=&\!\!\!
\lim_{t\to \pm \infty}\frac{\dot \mu(t)}{\dot \phi(t)}
=\lim_{t\to \pm \infty}\frac{\dot \phi(t)}{\dot \phi(t)}
=1,\\
\lim_{s\to \pm 1^{\mp}}\left(\frac{2(h''(s))^{2}}{(h'(s))^{3}}-\frac{h^{(3)}(s)}{(h'(s))^{2}}\right)
\!\!\!&=&\!\!\!
\lim_{t\to \pm \infty} \frac{\phi^{(3)}(t)\dot{\phi}(t)-(\ddot\phi(t))^{2}}{(\dot{\phi}(t))^{3}}
=\lim_{t\to \pm \infty} \frac{\pi}{2}\cdot\frac{2(t^{2}-1)}{t^{2}+1}=\pi,\\
\lim_{s\to \pm 1^{\mp}}\frac{d^{2}}{ds^{2}}\mu(h(s))
\!\!\!&=&\!\!\!
\lim_{t\to \pm \infty}\left(\frac{\ddot \mu(t)\dot \phi(t)-\dot\mu(t)\ddot \phi(t)}{(\dot \phi(t))^{3}}\right)
=0.
\end{eqnarray*}
Then by Corollary \ref{coro-1}, the compactified system (see \eqref{app-sys-comp-1} and \eqref{app-sys-comp-2})
is at least $C^{2}$-smooth on the extended domain $\mathbb{R}^{2}\times [-1,1]$.
It is worth noting that higher regularity can be further verified by Theorem \ref{thm-transf-cond}.

We now use the results in section \ref{sec:IM} to study the dynamical behavior of system \eqref{app-sys-1}.
It follows from \cite[Theorem 1.6, p.60]{Hale-80} that the limit system \eqref{app-sys-limit} has exactly
two invariant sets, one is a unstable focus $(0,0)$ and the other one is an asymptotically stable periodic solution which is denoted by $\Lambda_{0}$.
See Figure \ref{fg-1}.
\begin{figure}[!htbp]
\centering
\subfigure[]{
\begin{minipage}[t]{0.45\linewidth}
\centering
\includegraphics[width=2.8in]{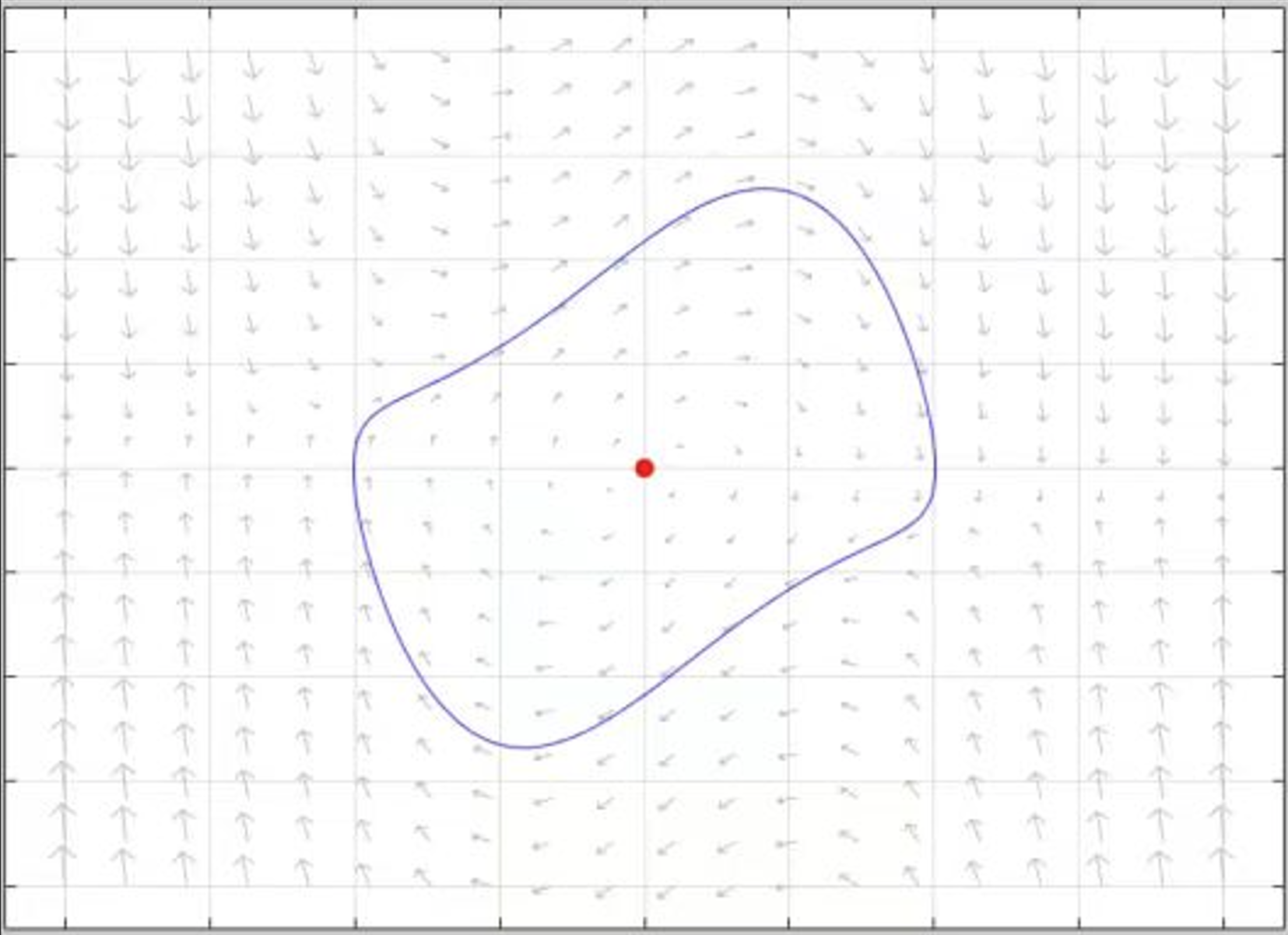}
\end{minipage}
\label{fg-1}
}%
\subfigure[]{
\begin{minipage}[t]{0.45\linewidth}
\centering
\includegraphics[width=2.8in]{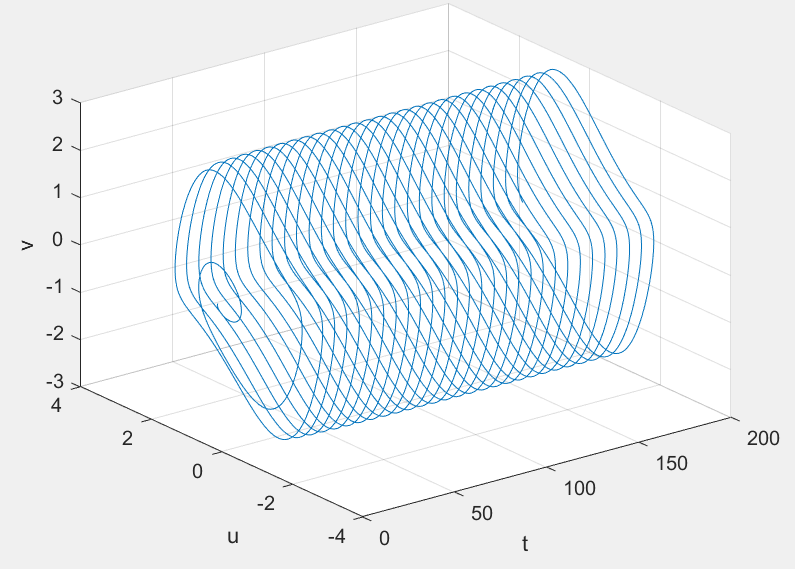}
\end{minipage}
\label{fg-2}
}%
\centering
\caption{
\ref{fg-1} Phase portrait of the limit system \eqref{app-sys-limit}.
  The red point is a unstable focus $(0,0)$ and the circle indicates an asymptotically stable periodic solution $\Lambda_{0}$.
\ref{fg-2} Solutions of system \eqref{app-sys-1} except $(0,0)$ are asymptotic to the periodic solution $\Lambda_{0}$ as $t\to +\infty$.
}
\end{figure}

Although we have
\[\lim_{t\to \pm \infty} \frac{\ddot{\phi}(t)}{\dot{\phi}(t)}
=
\lim_{t\to \pm \infty}\left(-\frac{2t}{t^{2}+1}\right)=0,
\]
which actually  implies that each invariant set of the compactified system gains one additional center direction,
the results in Theorem \ref{thm-NHIM-1} are applicable and we get the existence and uniqueness of center-stable manifolds for
the hyperbolic periodic solution $\Lambda_{0}$ of the future limit system,
and
$W^{\rm cs}_{\rm loc}(\tilde{\Lambda}^{+})=w_{\rm loc}^{\rm s}(\tilde{\Lambda}^{+})$ in the extended phase space of the compactified system,
where $\tilde{\Lambda}^{+}=(\Lambda_{0},1)$.
Recall that the stable manifold $W^{\rm s}(\tilde{\Lambda}^{+})$ of $\Lambda_{0}$ in the future limit system is $\mathbb{R}^{2}/\{(0,0)\}$.
This together with Theorem \ref{thm-NHIM-2} yields that
all solutions of system \eqref{app-sys-1} except $(0,0)$ are asymptotic to the periodic solution $\Lambda_{0}$
as $t\to +\infty$. See Figure \ref{fg-2}.

Actually, as shown in this example, the theory developed in this paper can be applied to investigate
the long term behavior of nonautonomous dynamical systems.
We hope that it can be also used to study forced waves \cite{Berestycki-Fang-18} and pullback attractors \cite{Kloeden-Rasmussen-11}.

\section*{Appendix A. The Fa\`a di Bruno formula}
\label{Append-A}

In this appendix,
we introduce the Fa\`a di Bruno formula for computing the partial derivatives of a function composition.
We also refer to \cite{Constantine-Savits-96,Faa-55} for more information on this formula.

Let $\varphi,\psi:\mathbb{R}\to \mathbb{R}$ be two $C^{n}$-smooth functions.
Suppose that $s_{0}=\psi(t_{0})$.
Then the $n$-th derivative of  $\Phi(t):=\varphi(\psi(t))$ at $t=t_{0}$ is given by
\renewcommand\theequation{A.1}
\begin{eqnarray}\label{FdB-formula}
\frac{d^{n}}{d t^{n}}\Phi(t_{0})
=\sum_{l=1}^{n}\frac{d^{l}}{ds^{l}}\varphi(s_{0})
\sum_{q(n,l)}n!\prod_{i=1}^{n}\frac{(\psi^{(i)}(t_{0}))^{\lambda_{i}}}{(\lambda_{i}!)(i!)^{\lambda_{i}}},
\end{eqnarray}
where
\[
\psi^{(i)}(t_{0})=\frac{d^{i}}{dt^{i}}\psi(t_{0}),
\ \ \ \ \
q(n,l)=\left\{(\lambda_{1},...,\lambda_{n}): \lambda_{i}\in \mathcal{N}_{0},\
       \sum_{i=1}^{n}\lambda_{i}=l,\ \sum_{i=1}^{n}i\lambda_{i}=n \right\}.
\]

\section*{Appendix B. Proof of Lemma \ref{lm:Phi-m-prpty}}
\label{Append-B}

We first prove the limit $t\to +\infty$. Noting that
\[
\Phi_{m}(t)\ln^{m}(t)=-1
\]
for sufficiently large $t>0$, we can compute
\renewcommand\theequation{B.1}
\begin{eqnarray}\label{eq:diff-1-0}
\dot\Phi_{m}(t)(t\prod_{i=1}^{m}\ln^{i}t)+\Phi_{m}(t)=0.
\end{eqnarray}
Differentiating \eqref{eq:diff-1-0} with respect to $t$ yields
\begin{eqnarray*}
\ddot\Phi_{m}(t)(t\prod_{i=1}^{m}\ln^{i}t)+\dot\Phi_{m}(t)(1+\frac{d}{dt}(t\prod_{i=1}^{m}\ln^{i}t))=0.
\end{eqnarray*}
Then we have
\renewcommand\theequation{B.2}
\begin{eqnarray}\label{limit-2-1}
\frac{\ddot\Phi_{m}(t)}{\dot\Phi_{m}(t)}
  =-(2+\sum_{j=1}^{m}\prod_{i\geq j}^{m}\ln^{i}t)(t\prod_{i=1}^{m}\ln^{i}t)^{-1}.
\end{eqnarray}
This implies that \eqref{limts:frac} holds for $n=1$.
Furthermore, by \eqref{eq:diff-1-0} we have
\renewcommand\theequation{B.3}
\begin{eqnarray}\label{eq:devit-1}
\Phi^{(n+1)}_{m}(t)(t\prod_{i=1}^{m}\ln^{i}t)
+\sum_{i=0}^{n-1}C_{n}^{i}\Phi^{(i+1)}_{m}(t)(\frac{d^{n-i}}{dt^{n-i}}(t\prod_{i=1}^{m}\ln^{i}t))
+\Phi^{(n)}_{m}(t)=0,
\ \ \ \ \ n\geq 1.
\end{eqnarray}
Dividing \eqref{eq:devit-1} by $(\dot\Phi_{m}(t))^{n}(t\prod_{i=1}^{m}\ln^{i}t)$ yields
\begin{eqnarray*}
\frac{\Phi^{(n+1)}_{m}(t)}{(\dot\Phi_{m}(t))^{n}}
\!\!\!&=&\!\!\!-\sum_{i=0}^{n-1}C_{n}^{i}\frac{\Phi^{(i+1)}_{m}(t)}{(\dot\Phi_{m}(t))^{i}}
  \cdot(\frac{d^{n-i}}{dt^{n-i}}(t\prod_{i=1}^{m}\ln^{i}t))\cdot\frac{\ln^{m}t}{(\dot\Phi_{m}(t))^{n-1-i}}
  -\frac{\Phi^{(n)}_{m}(t)\ln^{m}t}{(\dot\Phi_{m}(t))^{n-1}},
\ \ \ \ \ n\geq 1.
\end{eqnarray*}
Then for each $n\geq 2$, there exists a constant $t_{n}>0$ such that
\renewcommand\theequation{B.4}
\begin{eqnarray}\label{limit-n+1-n}
|\frac{\Phi^{(n+1)}_{m}(t)}{(\dot\Phi_{m}(t))^{n}}|
\leq t^{-1/3}+t^{1/3^{n}}\sum_{i=1}^{n-1}|\frac{\Phi^{(i+1)}_{m}(t)}{(\dot\Phi_{m}(t))^{i}}|,
\ \ \ \ \ t>t_{n}.
\end{eqnarray}
When $n=2$, we have
\begin{eqnarray*}
|\frac{\Phi^{(3)}_{m}(t)}{(\dot\Phi_{m}(t))^{2}}|\leq t^{-1/3}+t^{1/3^{2}}|\frac{\Phi^{(2)}_{m}(t)}{\dot\Phi_{m}(t)}|, \ \ \ \ \ t>t_{2}.
\end{eqnarray*}
This together with \eqref{limit-2-1} yields that \eqref{limts:frac} holds for $n=2$.
Note that
\[
\sum_{i=2}^{+\infty}\frac{1}{3^{i}}=\frac{1}{6}.
\]
Then by induction, there exists a constant $M_{k}$, dependent on $k$, such that
\[
|\frac{\Phi^{(n+1)}_{m}(t)}{(\dot\Phi_{m}(t))^{n}}|
\leq M_{k} (t^{-1/6}+t^{1/6}|\frac{\Phi^{(2)}_{m}(t)}{\dot\Phi_{m}(t)}|),
\ \ \ \ \
2\leq n\leq k.
\]
This together with \eqref{limit-2-1} yields that \eqref{limts:frac} holds for each $n$ with $2\leq n\leq k$.
The limit $t\to -\infty$ can be proved similarly.
Therefore, the proof is now complete.

\section*{Data availability}
No data was used for the research described in the article.

\section*{Acknowledgments}

This work is partially supported by the NSFC grant (No. 12101253),
the Scientific Research Foundation of CCNU (No. 31101222044) and
the CSC Visiting Scholarship.
The first author also thank Dr. Binyu Lu from Illinois Institute of Technology for his help in numerical simulations.


\bibliographystyle{amsplain}

\end{document}